 \def\Spnr{Sp(d,\R)}
 \def\Gltwonr{GL(2d,\R)}
\newtheorem{theorem}{Theorem}[section]
\newtheorem{lemma}[theorem]{Lemma}
\newtheorem{corollary}[theorem]{Corollary}
\newtheorem{proposition}[theorem]{Proposition}
\newtheorem{definition}[theorem]{Definition}
\newtheorem{remark}[theorem]{Remark}
\newcommand{\beqa}{\begin{eqnarray*}}
\newcommand{\eeqa}{\end{eqnarray*}}
\DeclareMathOperator*{\Sym}{Sym}
\newcommand{\field}[1]{\mathbb{#1}}
\newcommand{\bR}{\field{R}}        
\newcommand{\bN}{\field{N}}        
\newcommand{\bZ}{\field{Z}}        
\def\la{\lambda}
\def\cF{\mathcal{F}}              
\def\cS{\mathcal{S}}
\def\cD{\mathcal{D}}
\def\cA{\mathcal{A}}
\def\cI{\mathcal{I}}
\def\cC{\mathcal{C}}
\def\a{\aleph}
\def\hf{\hat{f}}
\def\rd{\bR^d}
\def\rdd{{\bR^{2d}}}
\def\lrd{L^2(\rd)}
\def\lrdd{L^2(\rdd)}
\def\spdr{{\mathfrak {sp}}(d,\R)}
\def\intrd{\int_{\rd}}
\def\intrdd{\int_{\rdd}}
\def\R{\right)}
\def\<{\left<}
\def\>{\right>}
\def\mv1{M_v^1}
\def\phas{(x,\o )}
\def\mn{(m,n)}
\def\mn'{(m',n')}
\def\Spnr{Sp(d,\R)}
\newcommand{\norm}[1]{\lVert#1\rVert}
\def\o{\eta}
\def\a{\alpha}
\def\N{\mathbb{N}}
\def\R{\mathbb{R}}
\def\Ren{\mathbb{R}^d}
\def\sch{\mathcal{S}}
\def\Sn2{S_{2}(L^{2}(\Ren))}
\def\S1{S_{1}(L^{2}(\Ren))}
\def\sig00{\sigma_{0,0}}
\def\la{\langle}
\def\ra{\rangle}
\begin{document}
	\begin{abstract}
		We present a novel Wigner analysis of Fourier integral operators (FIOs), with a particular focus on Schrödinger propagators derived from quadratic Hamiltonians with bounded perturbations. These perturbations are modeled by a pseudodifferential operator $\sigma(x,D)$, whose symbol belongs to the Hörmander class $S^0_{0,0}(\mathbb{R}^{2d})$. By computing and examining the Wigner kernel of these operators, we uncover important insights into a broader class of FIOs, denoted $FIO(S)$, where $S$ represents the symplectic matrix corresponding to the classical symplectic map.
		
		Our analysis demonstrates both the algebraic structure and Wiener’s property of this class, offering a framework to describe the Wigner kernel of Schrödinger propagators for any time $t \in \mathbb{R}$, including at caustic points. This highlights the robustness and effectiveness of the Wigner approach in solving Schrödinger equations, offering avenues for both theoretical understanding and practical application.
\end{abstract}

\title[Wigner Representation of of Schr\"odinger Propagators]{Wigner Representation of  Schr\"odinger Propagators}

\author{Elena Cordero}
\address{Universit\`a di Torino, Dipartimento di Matematica, via Carlo Alberto 10, 10123 Torino, Italy}
\email{elena.cordero@unito.it}
\author{Gianluca Giacchi}
\address{Universit\'a di Bologna, Dipartimento di Matematica, Piazza di Porta San Donato 5, 40126 Bologna, Italy; University of Lausanne, Switzerland; HES-SO School of Engineering, Rue De L'Industrie 21, Sion, Switzerland; Centre Hospitalier Universitaire Vaudois, Switzerland}
\email{gianluca.giacchi2@unibo.it}
\author{Luigi Rodino}
\address{Universit\`a di Torino, Dipartimento di Matematica, via Carlo Alberto 10, 10123 Torino, Italy}
\email{luigi.rodino@unito.it}

\thanks{}

\subjclass[2010]{35S30, 47G30,42A38}

\keywords{Fourier Integral
	operators, Wigner distribution,
	Schr\"{o}dinger equations,  Wiener algebra}
\maketitle

\section{Introduction}

Building upon Wigner's groundbreaking work \cite{Wigner}, we introduce a novel framework for analyzing Schrödinger propagators through the lens of Wigner representation. Extending this concept, we develop the Wigner localization of Fourier integral operators (FIOs), encompassing Schrödinger operators with phase functions characterized by quadratic forms in phase-space variables. This approach opens up perspectives for addressing  problems in quantum mechanics and partial differential equations.

The standard time-frequency analysis of FIOs, utilizing the Gabor matrix, as detailed in \cite{CGNRJMPA} and subsequent works \cite{wiener8, wiener9, locNC09, CGNRJMP2014}, has intrinsic limitations, primarily the reliance on chosen window functions. Our method circumvents this dependency, though at the cost of relinquishing the smooth decay behavior of the operator kernel. Nevertheless, the Wigner kernel proves advantageous, particularly in overcoming the challenges of caustics, while providing a robust tool for numerical computations.

Despite the extensive literature on classical and time-frequency methods for FIOs, the Wigner representation has remained largely unexplored. Our approach, therefore, presents a new direction that not only addresses unresolved issues but also broadens the scope of Wigner analysis for FIOs. We focus here on Schrödinger equations with bounded perturbations, while also laying the groundwork for more general FIOs with tame phase functions (cf. Definition \ref{def2.1}).

In particular, we concentrate on quadratic Hamiltonians perturbed by a pseudodifferential operator $\sigma(x,D)$, expressed in the Kohn-Nirenberg form:
 \begin{equation}\label{C6kohnNirenberg}
	\sigma(x,D) f(x)=\intrd e^{2\pi i x \xi}\sigma(x,\xi)\hat{f}(\xi)\,d\xi,
\end{equation}
with a symbol $\sigma$ in the  H\"{o}rmander class 
$S^0_{0,0}(\rdd)$, consisting of smooth functions  $\sigma$  on $\rdd$ such that 
\begin{equation}\label{I5}
	|\partial_x^\alpha \partial_\xi^\beta \sigma(x,\xi)|\leq c_{\alpha,\beta},\quad\alpha,\beta\in\N^d,\quad x,\xi\in\rd.
\end{equation}


%
%

We aim at studying  the Wigner
representation  of the
propagator $e^{i t H}$,
\begin{equation}\label{PropH}
	H=a(x,D)+\sigma(x,D),
\end{equation}
providing the solution to the Cauchy problem
\begin{equation}\label{C6intro}
	\begin{cases} i \displaystyle\frac{\partial
			u}{\partial t} +a(x,D)u+\sigma(x,D) u=0,\\
		u(0,x)=u_0(x).
	\end{cases}
\end{equation}
The Hamiltonian  $a(x,D)$ is the quantization of a quadratic form
or more generally a pseudodifferential operator in the Kohn-Nirenberg form
whose symbol $a$ satisfies $\partial^\alpha_{x,\xi}a\in S^0_{0,0}(\rdd)$ for $|\a|\geq2$, whereas
$\sigma(x,D)$ is the Kohn-Nirenberg operator expressing the perturbation mentioned above.

Basic examples are
the cases when $a(x,D)=-\Delta$ and $\sigma(x,D)=0$ which give the free particle or
when the quadratic form $a$ is given by $a(x,\xi)=\xi^2+x^2$ and again  $\sigma=0$  yielding the harmonic oscillator operator
$-\Delta+x^2$.
More generally, Hamiltonians of the type $H=-\Delta+xMx+V(x)$, for $V\in S^0_{0,0}(\rd)$, fall in our study.

Applications to our study include the  special cases of  systems with a magnetic potential and bounded perturbations, studied by Cassano and Fanelli  \cite{CF2015,CF2017}. Here the Hamiltonian is $H=\Delta_A+V(x,t)$, where the (electro)magnetic Laplacian is  $\Delta_A=(\nabla-iA(x))^2$, with magnetic potential given by some coordinate transformation $A:\rd\to\rd$, see Section \ref{7} below for details. 

It is well known (cf., \cite{AS78,bony1,bony2,bony3,CGNRJMPA}) that the  Schr\"odinger propagator $e^{itH}$  expressing the solution of \eqref{C6intro}, for small values of $t$ at least,  is a Fourier integral operator of
type I (see Section $6$ below) represented by
\begin{equation}\label{C61.13}
	(e^{i t H}u_0)(t,x)=\intrd e^{2\pi i \Phi(t,x,\xi)}
	b(t,x,\xi)\widehat{u}_0(\xi)\,d\xi,
\end{equation}
where the phase $\Phi$ is a quadratic form or, more generally, a tame phase. The regularity of the amplitude $b$ is related to the regularity of the perturbation $\sigma$, cf. \cite{helffer84}.
Its structure is linked to the Hamiltonian
field of $a(x,\xi)$.  Namely, consider
\begin{equation}\label{eitA}
	\begin{cases}
		2\pi\dot{x}=-\nabla  _\xi a ( x,\xi) \\
		2\pi \dot{\xi}=\nabla _x a (x,\xi)\\
		x(0)=y,\ \xi(0)=\eta,
	\end{cases}
\end{equation}
(the factor $2\pi$ depends on our normalization of the Fourier transform).
Under our assumptions, the solution
$\chi_t(y,\eta)=(x(t,y,\eta),\xi(t,y,\eta))$ exists for all
$t\in\bR$ and defines a symplectic diffeomorphism
$\chi_t:\,\bR_{y,\eta}^{2d}\to\bR_{x,\xi}^{2d}$ \emph{tame}, for every
fixed $t\in\bR$. When $a$ is quadratic, $\chi_t=S_t$ is a classical linear symplectic map in $Sp(d,\bR)$, the symplectic group. 

Let us introduce the most important character of our study: the Wigner distribution, named after E. Wigner, who introduced it in  1932 \cite{Wigner}. This representation was later  developed by Ville, Cohen and  many other authors, see e.g. \cite{Cohen1,Cohen2,Ville48}.
\begin{definition} Consider $f,g\in\lrd$. 
	The cross-Wigner distribution $W(f,g)$ is
	\begin{equation}\label{CWD}
		W(f,g)(x,\xi)=\intrd f(x+\frac t2)\overline{g(x-\frac t2)}e^{-2\pi i t\xi}\,dt.
	\end{equation} If $f=g$ we write $Wf:=W(f,f)$, the so-called Wigner distribution of $f$.
\end{definition}
Generalizing the Wigner approach in 1932, given a linear operator $T:\,\cS(\rd)\to \cS'(\rd)$, we consider  an operator $K$ on $\cS(\rdd)$ such that 
\begin{equation}\label{I3}
	W(Tf,Tg) = KW(f,g), \qquad f,g\in\cS(\rd)
\end{equation}
and its  kernel $k$ which we call the \emph{Wigner kernel}  of $T$:
\begin{equation}\label{I4}
	W(Tf,Tg)(z) = \intrdd k(z,w) W(f,g)(w)\,dw,\quad z\in\rdd,\quad f,g\in\cS(\rd).
\end{equation}

\begin{figure}
	\centerline{
	\includegraphics[width=\textwidth]{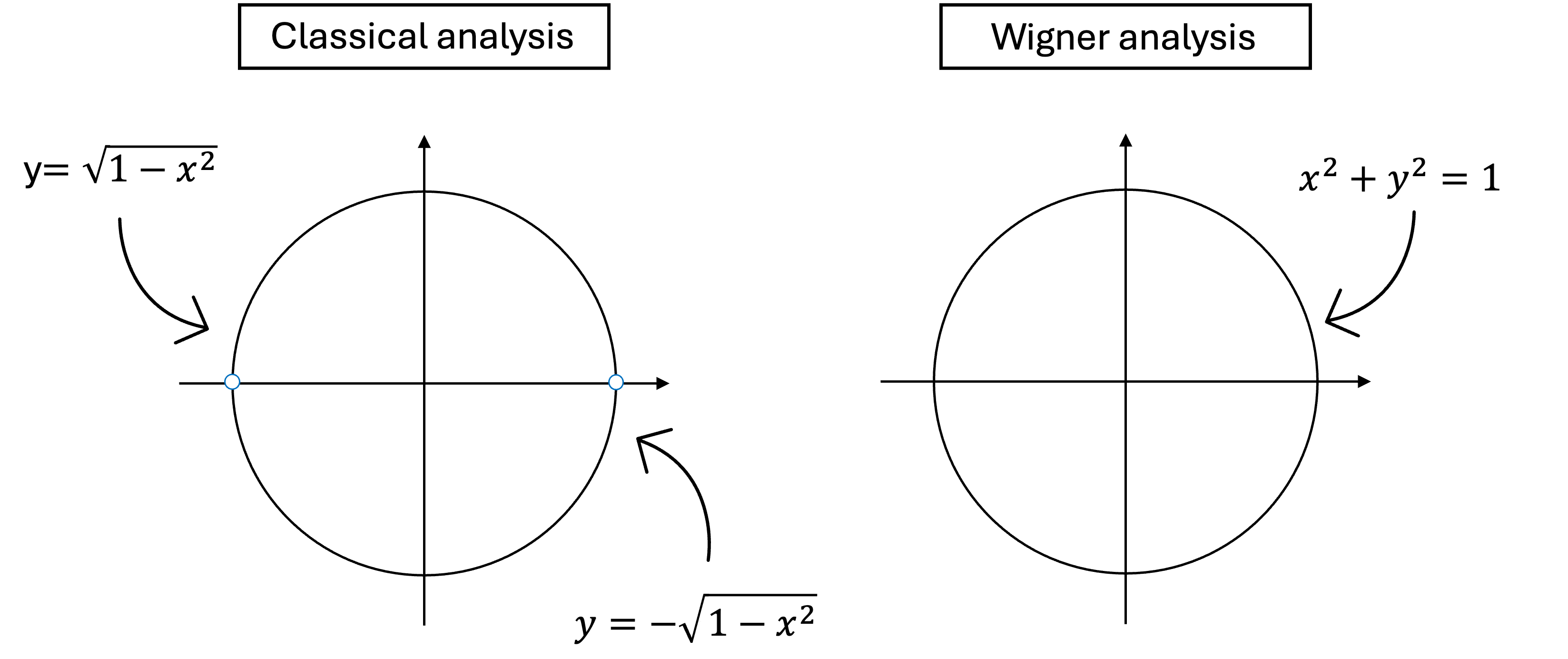}
	}
	\caption{Wigner analysis explained with an example. }
	\label{fig:Wigner}
\end{figure}

Basic example in our study is the {\it Quantum Harmonic Oscillator}, let us give first some intuitive ideas. A useful analogy for understanding Wigner analysis is the representation of the unit circle, as shown in Figure \ref{fig:Wigner}, either as the combination of the smooth one-variable functions $y=\sqrt{1-x^2}$ and $y=-\sqrt{1-x^2}$, for $-1<x<1$, or as the level set $x^2+y^2=1$. The first method, though explicit and involving only one variable, breaks down at the points $(\pm 1, 0)$ and is therefore local. In contrast, the second method, which is implicit and uses two variables, is global. In a similar way, the classical representation of the quantum harmonic oscillator fails at caustic points, making the representation local, as clarified next. Namely, consider the Hamiltonian $a(x,D)=\Delta-x^2$ and $\sigma(x,D)=0$. The solution $u(t,x)$ to \eqref{C6intro} can be expressed as the following FIO of type I \cite{folland89}:
\begin{equation*}
	u(t,x)=(\cos t)^{-d/2}\intrd e^{2\pi i [\frac1{\cos t} x\eta + \frac{\tan t}2 (x^2+\eta^2)]}\hat{u_0}(\eta)\,d\eta,\quad t\not=\frac\pi 2 + k\pi,\,\,k\in\bZ.
\end{equation*}
Here $u(t,x)=T_t$, where $T_t$ is a FIO with symbol $\sigma\equiv 1$ and phase
$$\Phi_t(x,\eta)=\frac1{\cos t} x\eta+
\frac{\tan t}2 (x^2+\eta^2),
$$
for  $t\not=\pi/2+k\pi$, $k\in\mathbb{Z}$. The points  $t=\pi/2+k\pi$ are the \emph{caustic points}. In these points the representation as type I FIO fails.
The canonical transformation $S_t$ is  obtained by solving \eqref{eitA}: 
\begin{equation}\label{C6chiharmonic}
	S_t(y,\eta)=\begin{pmatrix}(\cos
		t)I&(-\sin t)I\\(\sin t)I&(\cos
		t)I\end{pmatrix} \begin{pmatrix}y\\\eta\end{pmatrix},\quad\forall t\in\bR.
\end{equation}
The Wigner kernel of the FIO $T_t$ is 
\begin{equation*}
	k(t,z,w)=\delta_{z-S_t w},\quad w,z\in\rdd, 
\end{equation*}
for every $t\in\bR$, also meaningful in the caustic points.

Other examples are provided by  Schr\"{o}dinger equations with uniform magnetic potential, cf.  Section \ref{7} below.

Through the phase-space perspective of Wigner analysis, a global representation of FIOs with quadratic phase can be achieved, which remains valid even at caustic points. This global approach consists of replacing the conventional operator kernels, given by the Schwartz kernel theorem, with the  Wigner kernels, whose general properties will be discussed in Section \ref{Section3}.\\


As aforementioned, we only consider the case of $a(x,D)$ in \eqref{C6intro} to be the quantization of a quadratic form, so that the phase $\Phi$ is quadratic and  $\chi=S$ is  a linear symplectic map in $Sp(d,\bR)$. The perturbation 
$\sigma$ is chosen in the H\"{o}rmander class $S^0_{0,0}(\rdd)$. This leads to the following general class of FIOs.

\begin{definition}\label{C6T1.1old}
	Consider  $S \in Sp(d,\bR)$.  We say that the operator $T$ in \eqref{I3} is in the class FIO($S$) if its Wigner kernel $k$ in \eqref{I4} satisfies 
	\begin{equation}\label{nucleoFIO}
		k(z,w) =h(z,Sw),\quad z,w\in \rdd,
	\end{equation}
	where $h$ is the kernel of a pseudodifferential operator with symbol in the H\"ormander class
	$ S^0_{0,0}(\bR^{4d})$.
\end{definition}
The quantization does not affect \eqref{nucleoFIO}, since the H\"ormander class property of the symbol does not depend on it \cite[Proposition 1.2.5 and Remark 1.2.6]{NR}.
We refer to \cite{CGRPartII2022} for an alternative approach in terms of metaplectic operators. \par

In the last section, we shall prove that $e^{it H}\in FIO(S_t)$. 
In fact, for $t$ sufficiently small
our assumptions yield $\left|\det \frac{\partial x}{\partial
	y}(t,y,\eta)\right|>0$ in the expression of $S_t$ (see condition \eqref{detcond2} below), and
we shall show that the solution in \eqref{C61.13} satisfies the expression \eqref{nucleoFIO}
with the phase $\Phi(t,x,\xi)$ quadratic for $t$ in a small time interval $(-T,T)$.
In the
classical approach, cf.\ \cite{AS78}, the occurrence of caustics
makes the validity of \eqref{C61.13} local in time. So for $t\in
\bR$ one is led to multiple compositions of local representations,
with unbounded number of variables possibly appearing in the
expression. Whereas $k(t,z,w)$ obviously exists for every
$t\in\bR$, and the equality in \eqref{nucleoFIO} reads as
\begin{equation}\label{nucleoFIOt}
	k(t,z,w) =h_t(z, S_t(w)),\quad z,w\in \rdd,
\end{equation}
holding for every $t\in\bR$.
Fundamental step of the above claim is the \emph{algebra property} of the class FIO($S$). 
More generally, we will prove the following issues: 
\begin{theorem}[Properties of the class FIO($S$)] \label{tc1old}\par
	
	(i) An operator $T\in  FIO(S)$ is bounded on $\lrd$.
	
	(ii)  If $T_i\in FIO(S_i)$, $i=1,2$, then $T_1 T_2\in
	FIO(S_1 S_2)$.
	
	(iii)  If $T\in FIO(S)$ is invertible on $L^2(\rd)$,
	then $T^{-1}\in FIO(S^{-1})$.
\end{theorem}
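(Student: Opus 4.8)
The plan is to reduce all three statements to properties of the Hörmander class $S^0_{0,0}(\bR^{4d})$ and the Wigner transform, working throughout with the Wigner kernel rather than the operator kernel on $\lrd$. The key structural fact I would use is that, by Definition \ref{C6T1.1old}, an operator $T\in FIO(S)$ is completely determined by its Wigner kernel $k(z,w)=h(z,Sw)$ with $h$ the Schwartz kernel of a pseudodifferential operator with symbol in $S^0_{0,0}(\bR^{4d})$; and that, conversely, by \eqref{I3}--\eqref{I4} the assignment $T\mapsto K$ (the operator with kernel $k$, acting on Wigner distributions) intertwines composition: if $T_1,T_2$ have Wigner kernels $k_1,k_2$ then $T_1T_2$ has Wigner kernel $k_1\circ k_2$ given by $(k_1\circ k_2)(z,w)=\intrdd k_1(z,u)k_2(u,w)\,du$. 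The symplectic maps $S\in Sp(d,\bR)$ enter only through the change of variables $w\mapsto Sw$, which is measure-preserving since $\det S=1$.

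For \textbf{(i)}: I would show that the operator $K$ on $\lrdd$ with kernel $k(z,w)=h(z,Sw)$ is bounded, and then transfer this back to $L^2$-boundedness of $T$. Boundedness of $K$ follows because $h$ is the kernel of a pseudodifferential operator with symbol in $S^0_{0,0}(\bR^{4d})$, hence by the Calderón--Vaillancourt theorem it is bounded on $L^2(\bR^{4d})$; precomposition with the measure-preserving linear map $(z,w)\mapsto(z,Sw)$ preserves $L^2$-boundedness. To pass from boundedness of $K$ on $L^2(\rdd)$ to boundedness of $T$ on $\lrd$ one uses the relation $\|W(f,g)\|_{L^2(\rdd)}=\|f\|_2\|g\|_2$ (Moyal's identity), the intertwining $W(Tf,Tg)=KW(f,g)$, and a standard polarization/duality argument: from $\|W(Tf,Tf)\|_2\le\|K\|_{op}\|Wf\|_2=\|K\|_{op}\|f\|_2^2$ and $\|W(Tf,Tf)\|_2=\|Tf\|_2^2$ one gets $\|Tf\|_2\le\|K\|_{op}^{1/2}\|f\|_2$.

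For \textbf{(ii)}: compose the two Wigner kernels. With $k_i(z,w)=h_i(z,S_iw)$ one computes
\[
(k_1\circ k_2)(z,w)=\intrdd h_1(z,S_1u)\,h_2(u,S_2w)\,du
=\intrdd h_1(z,v)\,h_2(S_1^{-1}v,S_2w)\,dv,
\]
after the substitution $v=S_1u$ (Jacobian $1$). Writing $v=S_1S_2 w'$ where $w'$ is a dummy matching the target form, the point is to recognize $h(z,w'):=\intrdd h_1(z,v)h_2(S_1^{-1}v,w')\,dv$, i.e. the kernel obtained by composing the pseudodifferential operator with kernel $h_1$ with the one whose kernel is $(v,w')\mapsto h_2(S_1^{-1}v,w')$, and then $k_1\circ k_2(z,w)=h(z,S_1S_2w)$. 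So everything reduces to showing this $h$ is again the kernel of a pseudodifferential operator with symbol in $S^0_{0,0}(\bR^{4d})$. This is exactly the \emph{algebra property} of $Op(S^0_{0,0})$ together with its invariance under the linear change of variables induced by $S_1^{-1}\in Sp(d,\bR)$ (a symplectic substitution preserves the class, by the symbol calculus / Proposition quoted after Definition \ref{C6T1.1old}).

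For \textbf{(iii)}: if $T$ is invertible on $\lrd$, then $T^{-1}$ has a Wigner kernel $\ell$, and by (ii) applied to $T\circ T^{-1}=\Id$ (whose Wigner kernel is $\delta_{z-w}$, the kernel of the identity, corresponding to $S=\Id$), the kernel $\ell$ is a two-sided convolution inverse of $k$ in the kernel-composition algebra. Writing $k(z,w)=h(z,Sw)$ one expects $\ell(z,w)=h'(z,S^{-1}w)$ for some $h'$, and the claim is that $h'$ is the kernel of a pseudodifferential operator with symbol in $S^0_{0,0}(\bR^{4d})$ — this is precisely \emph{Wiener's property} (spectral invariance) of the operator algebra $Op(S^0_{0,0}(\bR^{4d}))$: an element that is invertible as a bounded operator on $L^2$ has its inverse again in the algebra. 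So (iii) should follow from the known Wiener property of $S^0_{0,0}$ (as recalled in the paper's abstract and references, e.g. Gröchenig--Rzeszotnik-type results), combined once more with the symplectic covariance that lets one strip off the $S$-twist.

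\textbf{Main obstacle.} The routine parts are the measure-preserving changes of variables and the $L^2$-transfer in (i). The substantive content is entirely in the claims that $Op(S^0_{0,0}(\bR^{4d}))$ is closed under composition \emph{and} is spectrally invariant (Wiener's property), and — crucially for our setting — that these properties survive the linear symplectic substitution $w\mapsto Sw$ appearing in \eqref{nucleoFIO}. I expect the main technical point to be verifying that conjugating a symbol in $S^0_{0,0}(\bR^{4d})$ by the affine change of variables associated with $S\in Sp(d,\bR)$ (which mixes the $z$ and $w$ blocks of $\bR^{4d}$) stays in $S^0_{0,0}$; this is a calculus statement but must be handled carefully because $S$ can be an arbitrary element of $Sp(d,\bR)$, so one cannot treat the $z$- and $w$-variables symmetrically. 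Once that covariance is in hand, (ii) and (iii) are immediate consequences of the algebra and Wiener properties of $S^0_{0,0}$, and (i) of Calderón--Vaillancourt.
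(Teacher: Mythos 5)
For parts (i) and (ii), your proposal follows essentially the same route as the paper: Moyal's identity plus Calder\'on--Vaillancourt and $|\det S|=1$ for (i), and composition of Wigner kernels, a substitution with Jacobian $1$, and the algebra property and symplectic invariance of $S^0_{0,0}(\bR^{4d})$ for (ii). (One small comment: in (ii) the paper does the bookkeeping via Weyl symbols, $\sigma=\cF_2\mathfrak{T}_L k$, rather than composing kernels directly, but the content is the same.)

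For (iii) your route genuinely differs from the paper's, and as written it has a gap. You invoke "Wiener's property of $Op(S^0_{0,0}(\bR^{4d}))$" and then say the $S$-twist is stripped off by "symplectic covariance," but the object whose invertibility you actually control is $K$, the operator on $L^2(\rdd)$ with kernel $k(z,w)=h(z,Sw)$ --- and $K$ is \emph{not} a pseudodifferential operator with $S^0_{0,0}$ symbol unless $S=\Id$, so Gr\"ochenig--Rzeszotnik does not apply to $K$ directly. To make your approach rigorous you would need to first factor $K=Op_h\circ \mathfrak{T}_S$ (with $\mathfrak{T}_SF=F(S^{-1}\cdot)$), deduce from invertibility of $K$ on $L^2(\rdd)$ (which itself requires the content of the paper's Lemma \ref{elena}) that $Op_h$ is invertible, apply Wiener's property to $Op_h$, and then conjugate the inverse symbol by $S$ to restore the twist. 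None of these steps is carried out in the proposal. The paper bypasses the factorization entirely: it first proves $T^*\in FIO(S^{-1})$ (by computing the Wigner kernel of $T^*$ as $\cI k$ and verifying the structure), then considers $P=T^*T\in FIO(I_{2d\times 2d})$ --- which, because $S=\Id$, \emph{is} an honest pseudodifferential operator with $S^0_{0,0}(\bR^{4d})$ symbol --- applies Lemma \ref{elena} and the Gr\"ochenig--Rzeszotnik Wiener property to $P$, and then writes $T^{-1}=P^{-1}T^*$ and invokes (ii). This reduction to the untwisted case is the key idea your proposal is missing; without it (or the explicit $K=Op_h\circ\mathfrak{T}_S$ factorization), the appeal to Wiener's property does not apply to the right object.
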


Also, a general theory of  Wigner kernels  is missing in literature, and we need a preliminary study on that. 
\begin{theorem}[Properties of the Wigner kernel] \label{Wkernel} 
Let $T:\cS(\rd)\to\cS'(\rd)$ have kernel $k_T\in\cS'(\rdd)$ and Wigner kernel $k\in\cS'(\bR^{4d})$ defined in \eqref{I4}. \\
(i) The Wigner kernel $k$ satisfies
$$\la k,Wf\otimes Wg\ra_{\la\cS',\cS\ra}\geq0\quad\mbox{for\,\,all}\,\,f,g\in\cS(\rd).$$

(ii) Set $\mathfrak{T}_pF(x,\xi,y,\eta)=F(x,y,\xi,-\eta)$. Then
\begin{equation*}
	k=\mathfrak{T}_pWk_T.
\end{equation*}
 As a result, \\
(ii.1) $k\in L^2(\bR^{4d})$ if and only if $k_T\in L^2(\rdd)$.\\
(ii.2) $k\in\cS(\bR^{4d})$ if and only if $k_T\in\cS(\rdd)$.
\end{theorem}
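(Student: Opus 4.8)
To prove Theorem~\ref{Wkernel}, the plan is to establish (ii) first and then read off (i), (ii.1) and (ii.2) from it. By the Schwartz kernel theorem $T$ has a unique kernel $k_T\in\cS'(\rdd)$, so for $f,g\in\cS(\rd)$ we may write $Tf(x)=\intrd k_T(x,y)f(y)\,dy$ in the distributional sense. Substituting this into the definition \eqref{CWD} of $W(Tf,Tg)$, then inserting the Fourier inversion formula for the cross-Wigner distribution,
\[
f(y_1)\overline{g(y_2)}=\intrd W(f,g)\Big(\tfrac{y_1+y_2}{2},\eta\Big)\,e^{2\pi i(y_1-y_2)\eta}\,d\eta ,
\]
and performing the unimodular change of variables $(y_1,y_2)\mapsto(y,s):=\big(\tfrac{y_1+y_2}{2},\,y_1-y_2\big)$, one rewrites $W(Tf,Tg)(x,\xi)=\intrdd k\big((x,\xi),(y,\eta)\big)\,W(f,g)(y,\eta)\,dy\,d\eta$ with
\[
k\big((x,\xi),(y,\eta)\big)=\dint k_T\Big(x+\tfrac t2,y+\tfrac s2\Big)\,\overline{k_T\Big(x-\tfrac t2,y-\tfrac s2\Big)}\;e^{-2\pi i t\xi}\,e^{2\pi i s\eta}\,dt\,ds .
\]
The right-hand side is exactly the Wigner distribution of $k_T$, regarded as a function on $\rdd$, evaluated at the point $\big((x,y),(\xi,-\eta)\big)$; comparing with the definition of $\mathfrak{T}_p$ this is the asserted identity $k=\mathfrak{T}_p\,W k_T$.

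The step I expect to be the main obstacle is making the manipulations above rigorous: when $T$ maps only into $\cS'(\rd)$, $Tf$ is a distribution, $W(Tf,Tg)$ is not a priori defined, and the products under the integral sign are formal. I would handle this by first proving $k=\mathfrak{T}_pW k_T$ for operators with kernel $k_T\in\cS(\rdd)$ --- so that $Tf\in\cS(\rd)$ and every integral above converges absolutely --- and then extending to general $k_T\in\cS'(\rdd)$ by continuity: $k_T\mapsto k_T\otimes\overline{k_T}$ is sequentially continuous $\cS'(\rdd)\times\cS'(\rdd)\to\cS'(\bR^{4d})$, partial Fourier transforms and linear changes of coordinates act continuously on $\cS'$, and $\cS(\rdd)$ is sequentially dense in $\cS'(\rdd)$. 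Alternatively, $\mathfrak{T}_pW k_T$ visibly satisfies the defining relation \eqref{I4}, so by uniqueness of the Wigner kernel (the span of $\{W(f,g):f,g\in\cS(\rd)\}$ being dense in $\cS(\rdd)$) it must coincide with $k$; in particular this also shows $W k_T$ is a well-defined element of $\cS'(\bR^{4d})$.

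Granting (ii), part (i) is a reformulation of Moyal's identity. Applying \eqref{I4} to the pair $(g,g)$ and pairing against the real-valued test function $Wf\in\cS(\rdd)$ yields
\[
\langle k,\,Wf\otimes Wg\rangle_{\langle\cS',\cS\rangle}=\langle W(Tg),\,Wf\rangle=|\langle Tg,f\rangle|^2\ge 0 ,
\]
the second equality being Moyal's formula $\langle W(u),W\phi\rangle=|\langle u,\phi\rangle|^2$, legitimate here because the hypotheses of the theorem guarantee that $W(Tg)=W(Tg,Tg)$ exists in $\cS'(\rdd)$ and $Wf$ is real. Equivalently, feeding $k=\mathfrak{T}_pW k_T$ into the pairing and using that $\mathfrak{T}_p$ is a measure-preserving involution together with $\mathfrak{T}_p(Wf\otimes Wg)=W(f\otimes\bar g)$ (which follows from $Wg(y,-\eta)=W\bar g(y,\eta)$) reduces (i) to Moyal's identity in dimension $2d$, giving $|\langle k_T,f\otimes\bar g\rangle|^2=|\langle Tg,f\rangle|^2\ge0$.

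Finally, (ii.1) and (ii.2) follow from (ii): since $\mathfrak{T}_p$ is a linear automorphism of $\bR^{4d}$ with determinant $\pm1$, it is unitary on $L^2(\bR^{4d})$ and a topological automorphism of $\cS(\bR^{4d})$ and $\cS'(\bR^{4d})$, whence $k\in L^2(\bR^{4d})\Leftrightarrow W k_T\in L^2(\bR^{4d})$ and $k\in\cS(\bR^{4d})\Leftrightarrow W k_T\in\cS(\bR^{4d})$. It then suffices to recall the equivalences $W k_T\in L^2(\bR^{4d})\Leftrightarrow k_T\in L^2(\rdd)$ and $W k_T\in\cS(\bR^{4d})\Leftrightarrow k_T\in\cS(\rdd)$: for ``$\Leftarrow$'' use Moyal's identity $\|W k_T\|_{L^2}=\|k_T\|_{L^2}^2$ and the continuity of $W:\cS(\rdd)\to\cS(\bR^{4d})$; for ``$\Rightarrow$'' use that, up to a partial Fourier transform and the unimodular change of variables above, $W k_T$ is nothing but $k_T\otimes\overline{k_T}$, so $W k_T\in L^2$ (resp.\ $\cS$) forces $k_T\otimes\overline{k_T}\in L^2(\bR^{4d})$ (resp.\ $\cS(\bR^{4d})$), and testing the second group of variables against any $\psi$ with $\langle k_T,\psi\rangle\ne0$ (such $\psi$ exists unless $k_T\equiv0$, a trivial case) exhibits $k_T$ itself in $L^2(\rdd)$ (resp.\ $\cS(\rdd)$).
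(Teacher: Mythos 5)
Your proposal is correct, and part (ii) is obtained by a genuinely different computation from the paper's. The paper works at the distributional level throughout: it constructs the metaplectic-type operator $\hat\cA_W = \cF_{2,4}\mathfrak{T}_L'$ that sends $(f_1\otimes\bar g_1)\otimes(f_2\otimes\bar g_2)$ to $W(f_1,g_1)\otimes W(f_2,g_2)$, starts from the tensor identity $\la Tf_1,g_1\ra\overline{\la Tf_2,g_2\ra}=\la k_T\otimes\overline{k_T},\,g_1\otimes\bar f_1\otimes\bar g_2\otimes f_2\ra$, and unwinds through permutations and $\hat\cA_W$ to reach $k=\mathfrak{T}\hat\cA_W\mathfrak{T}_{2,3}(k_T\otimes\overline{k_T})$, then checks this equals $\mathfrak{T}_pWk_T$ by one integral calculation. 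You instead substitute the Schwartz kernel directly into $W(Tf,Tg)$, insert the Fourier inversion formula for $W(f,g)$, and change variables $(y_1,y_2)\mapsto\big(\tfrac{y_1+y_2}{2},y_1-y_2\big)$ to expose $k$ as $Wk_T$ with variables permuted and the last sign flipped. Your route is more elementary and transparent, but it is formal for $k_T\in\cS'$; you correctly identify the fix and offer two valid repairs — approximation by $k_T\in\cS(\rdd)$ plus continuity, or (neater, and essentially what the paper uses through its Lemma 3.2) verifying that $\mathfrak{T}_pWk_T$ satisfies the defining relation and invoking uniqueness from density of $\mathrm{span}\{W(f,g)\}$ in $\cS(\rdd)$. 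The paper's approach avoids formal integrals entirely at the cost of more notation. For (i) both arguments reduce to Moyal's identity; for (ii.1) both use Moyal and unitarity of $\mathfrak{T}_p$; for (ii.2) the paper cites the known equivalence $k_T\in\cS\Leftrightarrow Wk_T\in\cS$ (via the STFT characterization of $\cS$), while you supply a short direct argument by undoing $\cF_2$ and the linear change and pairing out the second group of variables — both are fine, and your "$\Rightarrow$" argument is a nice self-contained alternative to the textbook citation.
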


Observe that this study could find further applications in Hardy's uncertainty principles, where the Wigner distribution has become the crucial tool, cf. the recent contributions \cite{FM2021,Helge} and references therein.
\vspace{0.3truecm}

The paper is organized as follows. Section \ref{sec:preliminaries} contains the preliminaries on time-frequency analysis and metaplectic operators which will be used in our study. Section \ref{Section3} is devoted to the properties of the Wigner kernel. In particular, Theorem \ref{Wkernel}  will be proved. Section \ref{sec:Section4} exhibits the properties of the class $FIO(S)$ and proves Theorem \ref{tc1old}. Section \ref{sec:Section5} treats in detail the Wigner kernel of FIOs of type I, which is the FIO of the Schr\"{o}dinger propagator \eqref{C61.13}. For further studies we treat the general case of FIOs with \emph{tame} phases, cf. Definition \ref{def2.1} below. 
Section \ref{sec:Section7} contains the application to  Schr\"{o}dinger propagators announced above. The  examples of the free particle and the uniform magnetic potential are detailed in Section \ref{7}. In Section \ref{sec:Section8} we discuss the comparison of our method within the existing literature. Finally, we devote the Appendix to a commutation relation that involves the partial Fourier transform with respect to the second variable and linear changes of variables.
\par

We conjecture our approach can treat more general cases, in particular Hamiltonians $a(x,\xi)$ homogeneous of order $2$ with respect to $w=(x,\xi)$. Definition \ref{C6T1.1old} should work, but in 
\eqref{nucleoFIO}  the kernel $h$ should refer to a somewhat exotic class of pseudodifferential operators, as suggested by Theorem \ref{T4.1} below.
\section{Preliminaries}\label{sec:preliminaries}
\textbf{Notation.} We call $t^2=t\cdot t$,  $t\in\rd$, and
$xy=x\cdot y$.  The space   $\sch(\Ren)$ is the Schwartz class and $\sch'(\Ren)$  the space of temperate distributions.   The brackets  $\la f,g\ra_{\la\cS',\cS\ra}$ denote the extension to $\sch' (\Ren)\times\sch (\Ren)$ of the inner product $\la f,g\ra_{L^2(\Ren)}=\int f(t){\overline {g(t)}}dt$ on $L^2(\Ren)$ (conjugate-linear in the second component). 
We denote $GL(d,\R)$ the group of real invertible $d\times d$ matrices and with $\Sym(d,\bR)$ the group of $d\times d$ real symmetric matrices. 
The Fourier transform is defined by $$\cF f(\xi)=\hf(\xi)=\intrd f(x)e^{-2\pi i x\xi}\,dx.$$
\subsection{The symplectic group $Sp(d,\mathbb{R})$ and the metaplectic operators \cite{Gos11}}
The standard symplectic matrix is denoted by
\begin{equation}\label{J}
	J=\begin{pmatrix} 0&I\\-I&0\end{pmatrix}.
\end{equation}
The symplectic group is given by
\begin{equation}\label{defsymplectic}
	\Spnr=\left\{S\in\Gltwonr:\;S^T JS=J\right\},
\end{equation}
where  $S^T$ is the transpose of $S$. We have $\det(S)=1$. If $S\in Sp(d,\bR)$, we write its $d\times d$ block decomposition: 
\begin{equation}\label{S}
	S=\begin{pmatrix}
	A & B\\
	C & D
	\end{pmatrix},
\end{equation}
where $A,B,C,D\in\bR^{d\times d}$. We say that $S$ is \emph{upper triangular} if $C=0$.

For $E\in GL(d,\bR)$ and $C\in \Sym(d,\bR)$, define:
\begin{equation}\label{defDLVC}
	\cD_E:=\begin{pmatrix}
		E^{-1} & 0\\
		0 & E^T
	\end{pmatrix} \qquad \text{and} \qquad V_C:=\begin{pmatrix}
		I & 0\\ C & I
	\end{pmatrix}.
\end{equation}
The matrices $J$, $V_C$, and $\cD_E$  generate the group $Sp(d,\bR)$.\\
The symplectic  algebra 
$\spdr$ is the algebra of
$2d\times 2d$ real matrices
$A$   such that $e^{t A}
\in \Spnr$, for every $t\in\R$.\par

The metaplectic group, named $Mp(d,\bR)$,
is a realization of the two-fold covering of $Sp(d,\bR)$. The projection
\begin{equation}\label{piMp}
	\pi^{Mp}:Mp(d,\bR)\to Sp(d,\bR)
\end{equation} is a group homomorphism with kernel $\ker(\pi^{Mp})=\{-id_{{L^2}},id_{{L^2}}\}$.

Here, if $\hat S\in Mp(d,\bR)$, the matrix $S$ will always be the unique symplectic matrix satisfying $\pi^{Mp}(\hat S)=S$.
A metaplectic operator we are going to use is the dilation operator. Namely, if \begin{equation}\label{taue}
	\mathfrak{T}_E:=|\det(E)|^{1/2}\,f(E\cdot),
\end{equation} then $\pi^{Mp}(\mathfrak{T}_E)=\cD_E$. Let us recall also that $\pi^{Mp}(\cF)=J$ and $\cA_{FT2}=\pi^{Mp}(\cF_2)\in Sp(2d,\bR)$, where
\begin{equation}\label{AFT2}
	\cA_{FT2}=\begin{pmatrix}
		I & 0 & 0& 0 \\
		0 & 0 & 0& I\\
		0 & 0 & I & 0 \\
		0 & -I & 0 & 0
	\end{pmatrix}
\end{equation}
and $\cF_2$ denotes the partial Fourier transform with respect to the second variable:
\begin{equation}\label{cF2}
	\cF_2 F(x,\xi)=\intrd e^{-2\pi i y\xi}F(x,y)\,dy,\quad F\in\cS(\rdd).
\end{equation}
	
%

Any metaplectic operator $\hat S\in Mp(d,\bR)$ is bounded on $\cS(\rd)$, and it can be extended to a bounded operator on $\cS'(\rd)$ by: 
\begin{equation}\label{MoyalDistr}
	\la\hat S f,\hat S g\ra_{\la\cS',\cS\ra}=\la f,g\ra_{\la\cS',\cS\ra}, \qquad f\in\cS'(\rd), g\in\cS(\rd).
\end{equation}
\subsection{Metaplectic Wigner distributions}
In what follows we will make use of few techniques from the theory of metaplectic Wigner distributions. We collect here the basic elements needed in this study.
For $\hat\cA\in Mp(2d,\bR)$, the \emph{metaplectic Wigner distribution} associated to $\hat\cA$ is defined  as
\begin{equation}\label{WA}
W_\cA(f,g)=\hat\cA(f\otimes\bar g),\quad f,g\in L^2(\rd).
\end{equation}
Many popular time-frequency representations are metaplectic Wigner distributions.

Recall the  $\tau$-Wigner distributions, $\tau\in\bR$, defined by
\begin{equation}\label{tauWigner}
	W_\tau(f,g)(x,\xi)=\int_{\rd} f(x+\tau t)\overline{g(x-(1-\tau)t)}e^{-2\pi i\xi  t}dt, \qquad  (x,\xi)\in\rdd,
\end{equation}
for $f,g\in L^2(\rd)$.
The cases $\tau=0$ and $\tau=1$ are the so-called (cross-)\textit{Rihacek distribution}
\begin{equation}\label{RD}
	W_0(f,g)(x,\xi)=f(x)\overline{\hat g(\xi)}e^{-2\pi i\xi x}, \quad (x,\xi)\in\rdd,
\end{equation}
and (cross-)\textit{conjugate Rihacek distribution}
\begin{equation}\label{CRD}
	W_1(f,g)(x,\xi)=\hat f(\xi)\overline{g(x)}e^{2\pi i\xi x}, \quad (x,\xi)\in\rdd.
\end{equation}
The case  $\tau=1/2$ is the cross-Wigner distribution, defined in \eqref{CWD}.
$\tau$-Wigner distributions are metaplectic Wigner distributions:
 $$W_\tau(f,g)=\hat A_\tau(f\otimes\bar g),$$ where
\begin{equation}\label{Atau}
	A_\tau=\begin{pmatrix}
		(1-\tau)I & \tau I & 0 & 0\\
		0 & 0 & \tau I & -(1-\tau)I\\
		0 & 0 & I & I\\
		-I & I & 0 & 0
	\end{pmatrix}.
\end{equation}
In particular, for the Wigner case $\tau=1/2$,
\begin{equation}\label{A12}
Wf=W_{1/2}(f,f)=\hat A_{1/2}(f\otimes\bar f), \quad f\in L^2(\rd).
\end{equation}
Observe that $\hat A_{1/2}$ can be split into the product
\begin{equation}\label{A12prodotto}
	\hat A_{1/2}=\cF_2\mathfrak{T}_L,
\end{equation}
where
	\[
L=\begin{pmatrix}I & \frac{1}{2}I\\ I & -\frac{1}{2}I\end{pmatrix}
\]
and $\mathfrak{T}_L$ is the dilation operator in \eqref{taue} with $E=L$.
	From \eqref{A12prodotto} we obtain
	\[
	\hat A_{1/2}F(x,\xi)=\int_{\rd}F(x+t/2,x-t/2)e^{-2\pi i\xi t}dt, \qquad F\in \cS(\rdd).
	\]
	Moreover, $\hat A_{1/2}^{-1}=\mathfrak{T}_{L^{-1}}\cF_2^{-1}$, where
	\[
	L^{-1}=\begin{pmatrix} \frac{1}{2}I& \frac{1}{2}I\\
		I & -I
	\end{pmatrix},
	\]
	so that
\begin{equation}\label{rem1}
	\hat A_{1/2}^{-1}F(x,\xi)=\int_{\rd}F(x/2+\xi/2,y)e^{2\pi i(x-\xi) y}dy, \qquad F\in\cS(\rdd).
\end{equation}

Similarly to the Wigner case, $W_\cA$ enjoys  the following properties:
\begin{proposition}\label{prop25}
	Let $W_\cA$ be a metaplectic Wigner distribution. Then, the following continuity properties hold:\\
	(i) $W_\cA:L^2(\rd)\times L^2(\rd)\to L^2(\rdd)$ is bounded.\\
	(ii) $W_\cA:\cS(\rd)\times\cS(\rd)\to \cS(\rdd)$ is continuous.\\
	(iii) $W_\cA:\cS'(\rd)\times\cS'(\rd)\to\cS'(\rdd)$ is continuous.\\
	(iv) Moyal's identity holds: for every $f,g,\varphi,\gamma\in L^2(\rd)$,
	\[
		\la W_\cA(f,g),W_\cA(\varphi,\gamma)\ra_{L^2(\rdd)}=\la f,\varphi\ra_{L^2(\rd)}\overline{\la g,\gamma\ra}_{L^2(\rd)}.
	\]
	(v) For $f\in\cS'(\rd)$ and $g\in\cS(\rd)$, setting $W_\cA f=W_\cA(f,f)$, and similarly for $W_\cA g$, we have:
	\begin{equation}\label{MoyalNext}
		\la W_\cA f,W_\cA g\ra_{\la\cS',\cS\ra}=|\la f,g\ra_{\la\cS',\cS\ra}|^2.
	\end{equation}
\end{proposition}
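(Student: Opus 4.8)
The plan is to derive all five statements of Proposition \ref{prop25} from the corresponding properties of metaplectic operators already recorded in Section \ref{sec:preliminaries}, using the defining identity $W_\cA(f,g)=\hat\cA(f\otimes\bar g)$ together with the behavior of the tensor product map $(f,g)\mapsto f\otimes\bar g$. First I would observe that conjugation $g\mapsto\bar g$ is an isometric (conjugate-linear) bijection on each of $L^2(\rd)$, $\cS(\rd)$, and $\cS'(\rd)$, and that the bilinear map $(f,g)\mapsto f\otimes g$ is bounded $L^2(\rd)\times L^2(\rd)\to L^2(\rdd)$ with $\|f\otimes g\|_{L^2(\rdd)}=\|f\|_{L^2}\|g\|_{L^2}$, continuous $\cS(\rd)\times\cS(\rd)\to\cS(\rdd)$, and (separately) continuous $\cS'(\rd)\times\cS'(\rd)\to\cS'(\rdd)$. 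Granting these, (i) follows since $\hat\cA\in Mp(2d,\bR)$ is unitary on $L^2(\rdd)$; (ii) follows since $\hat\cA$ is bounded on $\cS(\rdd)$; and (iii) follows since $\hat\cA$ extends continuously to $\cS'(\rdd)$ via \eqref{MoyalDistr}.

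For (iv), the key computation is
\[
\la W_\cA(f,g),W_\cA(\varphi,\gamma)\ra_{L^2(\rdd)}=\la\hat\cA(f\otimes\bar g),\hat\cA(\varphi\otimes\bar\gamma)\ra_{L^2(\rdd)}=\la f\otimes\bar g,\varphi\otimes\bar\gamma\ra_{L^2(\rdd)},
\]
using unitarity of $\hat\cA$ on $L^2(\rdd)$. The last inner product factors as a product of one-dimensional integrals,
\[
\la f\otimes\bar g,\varphi\otimes\bar\gamma\ra_{L^2(\rdd)}=\Big(\intrd f(x)\overline{\varphi(x)}\,dx\Big)\Big(\intrd\overline{g(y)}\,\gamma(y)\,dy\Big)=\la f,\varphi\ra_{L^2(\rd)}\overline{\la g,\gamma\ra_{L^2(\rd)}},
\]
by Fubini (all integrands are in $L^1$ by Cauchy--Schwarz since $f,g,\varphi,\gamma\in L^2$). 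This is exactly the asserted Moyal identity.

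For (v) I would argue that the sesquilinear pairing identity extends the $L^2$ Moyal identity to the duality $\la\cS',\cS\ra$. Concretely, for fixed $g\in\cS(\rd)$ the map $f\mapsto W_\cA(f,f)$ is not linear, but one can write $\la W_\cA f,W_\cA g\ra_{\la\cS',\cS\ra}=\la\hat\cA(f\otimes\bar f),\hat\cA(g\otimes\bar g)\ra_{\la\cS',\cS\ra}$ and invoke \eqref{MoyalDistr} with the roles of the distributional and Schwartz arguments: $\la\hat\cA u,\hat\cA v\ra_{\la\cS',\cS\ra}=\la u,v\ra_{\la\cS',\cS\ra}$ for $u=f\otimes\bar f\in\cS'(\rdd)$ and $v=g\otimes\bar g\in\cS(\rdd)$. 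Then $\la f\otimes\bar f,g\otimes\bar g\ra_{\la\cS',\cS\ra}=\la f,g\ra_{\la\cS',\cS\ra}\overline{\la f,g\ra_{\la\cS',\cS\ra}}=|\la f,g\ra_{\la\cS',\cS\ra}|^2$, where the factorization of the tensor-product pairing into the product of the two one-variable pairings is the statement that for a rank-one test function $v=g\otimes\bar g$ and a tensor distribution $u=f\otimes\bar f$, Fubini for distributions applies. The main obstacle I anticipate is precisely justifying this last factorization at the level of temperate distributions: one must check that $f\otimes\bar f$ is a well-defined element of $\cS'(\rdd)$ depending continuously on $f\in\cS'(\rd)$ and that its pairing against a tensor $g\otimes\bar g$ of Schwartz functions splits as claimed — this is standard (nuclearity of $\cS$, or direct approximation of $g\otimes\bar g$ by finite sums of tensors is unnecessary since it is already a pure tensor), but it is the one point where a little care is needed rather than a one-line citation. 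Everything else is a transcription of the already-stated properties of $\hat\cA$ and of the tensor map.
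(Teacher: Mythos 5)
Your proof is correct, and it is the standard argument one would give for this proposition; the paper itself states Proposition~\ref{prop25} without proof (as collected preliminary material), so there is no ``paper proof'' to compare against. All five items reduce, exactly as you do, to (a) the mapping properties of $(f,g)\mapsto f\otimes\bar g$ on $L^2$, $\cS$, and $\cS'$, and (b) the unitarity of $\hat\cA$ on $L^2(\rdd)$, its continuity on $\cS(\rdd)$, and its extension to $\cS'(\rdd)$ via \eqref{MoyalDistr}. Your treatment of the one genuinely delicate point — the factorization $\la f\otimes\bar f,\,g\otimes\bar g\ra_{\la\cS',\cS\ra}=\la f,g\ra\,\overline{\la f,g\ra}$ in item (v) — is right: it is the tensor-pairing identity $(u_1\otimes u_2)(\varphi_1\otimes\varphi_2)=u_1(\varphi_1)\,u_2(\varphi_2)$ together with $\bar f(g)=\overline{f(\bar g)}$, and your remark that $g\otimes\bar g$ is already a pure tensor (so no density approximation is needed) is exactly the observation that makes it a one-step computation rather than a genuine use of the kernel theorem. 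The only stylistic caveat is that in (iii) the map is separately, not jointly, continuous on $\cS'\times\cS'$, which you correctly flag; this matches the intended reading of the proposition.
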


%

\subsection{Pseudodifferential operators}
For a given time-frequency representation $Q$ and distribution $\sigma\in\cS'(\rdd)$, we consider the association:
\[
	L_Q:\sigma\in\cS'(\rdd)\mapsto Op_Q(\sigma),
\]
where $Op_Q(\sigma):\cS(\rd)\to\cS'(\rd)$ is defined by:
\[
	\la Op_Q(\sigma)f,g\ra_{\la\cS',\cS\ra}:=\la \sigma,Q(g,f)\ra_{\la\cS',\cS\ra}, \qquad f,g\in\cS(\rd).
\]
The operator $Op_Q(\sigma)$ is called \emph{pseudodifferential operator} with \emph{symbol} $\sigma$ and  \emph{quantization} $Q$.

The most popular correspondences are the \emph{Weyl correspondence} (when $Q$ is the cross-Wigner distribution $W$) and the \emph{Kohn-Nirenberg correspondence} (when $Q$ is the cross-Rihacek distribution $W_0$) for which we used the notation $\sigma(x,D)=Op_{W_0}(\sigma)$. 
The choice $Q=W_1$ gives the so-called adjoint correspondence. If $\sigma\in\cS'(\rdd)$ is the symbol of the Kohn-Nirenberg quantization of a pseudodifferential operator $\sigma(x,D)$, the kernel $h$ of $\sigma(x,D)$ can be written in terms of $\sigma$ as:
\[
	h(x,\xi)=\cF_2\sigma(x,\xi-x), \qquad x,\xi\in\rd.
\]

\subsection{Tame phase functions and related canonical transformations}\label{sec24}
In what follows we recall the phase functions and related canonical transformations that intervene in this study. We  follow the notation of \cite{CGNRJMPA}.
\begin{definition}\label{def2.1}  A real  phase function  $\Phi$  on $\rdd$ is called \emph{tame} if it satisfies the following
properties:\\
{\it  A1.} $\Phi\in \cC^{\infty}(\rdd)$;\\
{\it  A2.} for $z=\phas\in\rdd$,
\begin{equation}\label{phasedecay}
	|\partial_z^\a \Phi(z)|\leq
	C_\a,\quad \forall \a\in \bN^{2d},\,\,|\a|\geq
	2;\end{equation}
{\it  A3.} there exists $\delta>0$ such
that
\begin{equation}\label{detcond}
	|\det\,\partial^2_{x,\eta} \Phi(x,\o)|\geq \delta,\quad\forall x,\eta\in\rd.
\end{equation}
\par
Solving the system
\begin{equation}\label{cantra} \left\{
	\begin{array}{l}
		y=\Phi_\eta(x,\eta)
		\\
		\xi=\Phi_x(x,\eta), \rule{0mm}{0.55cm}
	\end{array}
	\right.
\end{equation}
 with respect to $(x,\xi)$, we obtain a
map $\chi:\rdd\to\rdd$
defined by \begin{equation}\label{chi}
	(x,\xi)=\chi(y,\o),
\end{equation}
 with the following properties:\\
 
 \noindent {\it  A4.} $\chi$ is smooth, invertible,  and
 preserves the symplectic form in $\rdd$, i.e., $$dx\wedge d\xi= d
 y\wedge d\eta$$
  ($\chi $ is a \emph{symplectomorphism}).
 \\
 {\it  A5.} For $z=(y,\eta)$,
 \begin{equation}\label{chistima}
 	|\partial_z^\a \chi(z)|\leq C_\a,\quad\forall \a\in \bN^{2d},\,\, |\a|\geq 1;\end{equation}
 {\it A6}. There exists $\delta>0$ such that, for
 $(x,\xi)=\chi(y,\eta)$,
 \begin{equation}\label{detcond2}
 	\left|\det\,\frac{\partial x}{\partial y}(y,\eta)\right|\geq \delta.
 \end{equation}
\end{definition}
 Conversely,  as it was observed in \cite{CGNRJMPA}, to every transformation $\chi$ satisfying the three hypothesis above corresponds a tame phase $\Phi$, uniquely
 determined up to a constant. \par

 In our study we will consider linear symplectic transformations $S\in Sp(d,\R)$ with the $d\times d$ block decomposition \eqref{S}.
%
 Then, condition {\it A6}  of Definition \ref{def2.1} is
 equivalent to $\det A\not=0$. In this case, the related metaplectic operator $\hat{S}$ is explicitly
 given by the FIO of type I
 \begin{equation}\label{f4}
 \hat{S}f(x)=(\det A)^{-1/2}\int _{\rd }  e^{2\pi i \Phi(x,\eta)}
 	\hat{f}(\eta)\,d\eta \,
 \end{equation}
 with  the   phase $\Phi$  given by
 \begin{equation}\label{fase}\Phi(x,\eta)=\frac12  x CA^{-1}x+
 	\eta  A^{-1} x-\frac12\eta  A^{-1}B\eta \, .
 \end{equation}
 (see  \cite[Proposition 6.1.18]{Elena-book} or \cite[Theorem 4.51]{folland89}).

\section{Properties of the Wigner Kernel}\label{Section3}

This section will be mainly focused on the general properties of Wigner kernels. First, we give the precise definition of the Wigner kernel of a linear continuous operator $T:\cS(\rd)\to\cS'(\rd)$, then we shall prove its existence and uniqueness.

\begin{definition}\label{defWKernel}
	Let $T:\cS(\rd)\to\cS'(\rd)$ be a continuous linear operator. The \emph{Wigner kernel} of $T$ is the distribution $k\in\cS'(\bR^{4d})$ such that:
	\begin{equation}\label{kerFormula}
		\la W(Tf,Tg),W(u,v)\ra_{\la\cS',\cS\ra}=\la k,W(u,v)\otimes\overline{W(f,g)}\ra_{\la\cS',\cS\ra}, \qquad f,g,u,v\in\cS(\rd).
	\end{equation}
\end{definition}

	We observe that if $k\in\cS(\bR^{4d})$, we retrieve the integral formula \eqref{I4}. 

%
%
Moreover, for every $f,g,u,v\in\cS(\rd)$,
\begin{align*}
	 \la W(Tf,Tg), W(u,v)\ra_{\la\cS',\cS\ra}&=\la \hat{A}_{1/2}(Tf\otimes\overline{Tg}), \hat{A}_{1/2}(u\otimes\overline{v})\ra_{\la\cS',\cS\ra}\\
	&=\la Tf,u\ra_{\la\cS',\cS\ra}\overline{\la Tg,v\ra}_{\la\cS',\cS\ra}.
\end{align*}
\begin{remark}
	In particular, from the above identities, we retrieve Theorem \ref{Wkernel} $(i)$. In fact, for every $f,g\in \cS(\rd)$,
\begin{align*}
	\langle k, Wf\otimes Wg\rangle_{\la\cS',\cS\ra} &=\langle Tg,f\rangle_{\la\cS',\cS\ra} \overline{\la Tg,f\ra}_{\la\cS',\cS\ra}=|\la Tg,f\ra_{\la\cS',\cS\ra}|^2\geq0.
\end{align*}
\end{remark}



We need the following lemma.
\begin{lemma}\label{lemmaDensity}
	The set $$ \mbox{span}\{W(f_1,g_1)\otimes \overline{W(f_2,g_2)}, f_1,f_2,g_1,g_2\in\cS(\rd)\} $$ is dense in $\cS(\bR^{4d})$. 
\end{lemma}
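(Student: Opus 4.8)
The plan is to reduce the density of the span of tensor products $W(f_1,g_1)\otimes\overline{W(f_2,g_2)}$ in $\cS(\bR^{4d})$ to two known facts: first, that the span of the single cross-Wigner distributions $W(f,g)$ with $f,g\in\cS(\rd)$ is dense in $\cS(\rdd)$, and second, that if a set $\mathcal{E}\subseteq\cS(\rd_1)$ is dense and a set $\mathcal{F}\subseteq\cS(\rd_2)$ is dense, then $\mathrm{span}\{e\otimes f: e\in\mathcal{E}, f\in\mathcal{F}\}$ is dense in $\cS(\rd_1\times\rd_2)$. Granting these, apply the second fact with $\rd_1=\rd_2=\rdd$, $\mathcal{E}=\{W(f,g):f,g\in\cS(\rd)\}$ and $\mathcal{F}=\{\overline{W(f,g)}:f,g\in\cS(\rd)\}$ (the latter is dense since complex conjugation is a topological isomorphism of $\cS(\rdd)$ and the former is dense), which immediately yields the claim.

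First I would establish the density of $\mathrm{span}\{W(f,g):f,g\in\cS(\rd)\}$ in $\cS(\rdd)$. By \eqref{A12prodotto}, $W(f,g)=\hat A_{1/2}(f\otimes\bar g)=\cF_2\mathfrak{T}_L(f\otimes\bar g)$, and $\hat A_{1/2}$ is a topological isomorphism of $\cS(\rdd)$ by Proposition \ref{prop25}(ii) (applied to $\hat A_{1/2}$ and its inverse). Hence it suffices to show $\mathrm{span}\{f\otimes\bar g:f,g\in\cS(\rd)\}$ is dense in $\cS(\rdd)$; but conjugation on the second factor is a homeomorphism, so this is equivalent to the density of $\mathrm{span}\{f\otimes g:f,g\in\cS(\rd)\}$ in $\cS(\rdd)$, which is the classical statement that $\cS(\rd)\otimes\cS(\rd)$ is dense in $\cS(\rd\times\rd)$ (a standard consequence of the nuclearity of $\cS$, or provable directly by tensor-product Hermite expansions).

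Next I would handle the abstract tensor step. If $H_n\to H$ in $\cS(\rd_1\times\rd_2)$ approximations are sought, it is enough to approximate each $H$ in $\mathrm{span}\{e\otimes f\}$ where $e,f$ range over dense subsets; again by nuclearity $\cS(\rd_1)\otimes\cS(\rd_2)$ is dense in $\cS(\rd_1\times\rd_2)$, so one reduces to approximating a single elementary tensor $u\otimes v$, with $u\in\cS(\rd_1)$, $v\in\cS(\rd_2)$, and this factors: pick $e_k\to u$ in $\cS(\rd_1)$ with $e_k\in\mathrm{span}\,\mathcal{E}$ and $f_k\to v$ in $\cS(\rd_2)$ with $f_k\in\mathrm{span}\,\mathcal{F}$, then $e_k\otimes f_k\to u\otimes v$ in $\cS(\rd_1\times\rd_2)$ because the bilinear map $\otimes:\cS(\rd_1)\times\cS(\rd_2)\to\cS(\rd_1\times\rd_2)$ is continuous (bounded on bounded sets) with respect to the Fréchet topologies.

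The main obstacle is not any single hard estimate but rather keeping the functional-analytic bookkeeping clean: one must be careful that "density of the algebraic tensor product" is being used in the correct topology on $\cS$, and that the continuity of the tensor map and of $\hat A_{1/2}$ is invoked with explicit seminorm bounds rather than hand-waving. A self-contained alternative that avoids citing nuclearity would be to work with the Hermite basis: the Hermite functions $h_\alpha$, $\alpha\in\N^{2d}$, form an unconditional (Schauder) basis of $\cS(\rdd)$, each $h_\alpha$ on $\rdd=\rd\times\rd$ is a finite linear combination of products $h_\beta\otimes h_\gamma$, and each $h_\beta$ on $\rd$ can be written as $W(\varphi,\psi)$-type object after applying $\hat A_{1/2}^{-1}$; tracking this through gives the result constructively, but the cleanest writeup is the two-step reduction above.
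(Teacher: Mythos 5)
Your proof is correct and follows the same two-step skeleton as the paper's: density of $\mathrm{span}\{W(f,g):f,g\in\cS(\rd)\}$ in $\cS(\rdd)$, combined with the classical density of $\mathrm{span}\{F\otimes G:F,G\in\cS(\rdd)\}$ in $\cS(\bR^{4d})$. The only difference is that the paper cites \cite[Lemma~1.1]{AEFN06} for the first density fact, whereas you rederive it from the factorization $W(f,g)=\hat A_{1/2}(f\otimes\bar g)$ and the fact that $\hat A_{1/2}\in Mp(2d,\bR)$ is a topological isomorphism of $\cS(\rdd)$, which makes the argument self-contained within the paper's own machinery; this is a small but genuine improvement in exposition. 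One minor simplification you could make: rather than invoking that complex conjugation is a homeomorphism, note directly that $\overline{W(f_2,g_2)}=W(g_2,f_2)$, so the conjugate bar on the second factor is vacuous and the set in the lemma already coincides with $\mathrm{span}\{W(f_1,g_1)\otimes W(g_2,f_2)\}$.
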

\begin{proof}
	By \cite[Lemma 1.1]{AEFN06}, $\mbox{span}\{W(f,g):f,g\in\cS(\rd)\}$ is dense in $\cS(\rdd)$. The assertion follows by the well-known density of $\mbox{span}\{F\otimes G:F,G\in\cS(\rdd)\}$ into $\cS(\bR^{4d})$.
\end{proof}

We left open the question of whether the Wigner kernel exists for every continuous linear operator $T$. In the following result, we prove that the Wigner kernel $k$ of such an operator $T$ exists and it is unique, and we relate it to the kernel of $T$ given by Schwartz' kernel theorem, i.e. the unique tempered distribution $k_T$ characterized by:
\[
\la Tf,g\ra_{\la\cS',\cS\ra}=\la k_T,g\otimes\bar f\ra_{\la\cS',\cS\ra}, \qquad f,g\in\cS(\rd).
\]

\begin{theorem}\label{3.3}
	Let $T:\cS(\rd)\to\cS'(\rd)$ have kernel $k_T\in\cS'(\rdd)$. There exists a unique distribution $k\in\cS'(\bR^{4d})$ such that \eqref{kerFormula} holds. Therefore, every continuous linear operator $T:\cS(\rd)\to\cS'(\rd)$ has a unique Wigner kernel. Moreover,
	\begin{equation}\label{nuclei}
		k=\mathfrak{T}_pWk_T,
	\end{equation}
	where $\mathfrak{T}_pF(x,\xi,y,\eta)=F(x,y,\xi,-\eta)$.
\end{theorem}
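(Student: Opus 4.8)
The plan is to establish the formula $k = \mathfrak{T}_p W k_T$ first, and then to obtain existence and uniqueness as immediate consequences. The starting point is the chain of identities already recorded just before Lemma \ref{lemmaDensity}: for all $f,g,u,v\in\cS(\rd)$,
\[
\la W(Tf,Tg), W(u,v)\ra_{\la\cS',\cS\ra} = \la Tf,u\ra_{\la\cS',\cS\ra}\,\overline{\la Tg,v\ra}_{\la\cS',\cS\ra}.
\]
Using the Schwartz kernel $k_T$, the right-hand side equals $\la k_T, u\otimes\bar f\ra\,\overline{\la k_T, v\otimes\bar g\ra}$. The idea is then to recognize this bilinear expression in $k_T$ as $\la Wk_T,\, W(u\otimes\bar f,\, v\otimes\bar g)\ra$ via Moyal's identity for the (cross-)Wigner distribution on $\rdd$ (Proposition \ref{prop25}(iv) applied in dimension $2d$), turning a product of two pairings into a single pairing against $Wk_T$. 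After that, the task reduces to a bookkeeping step: rearranging the tensor-product variables so that $W(u\otimes\bar f, v\otimes\bar g)$, a function on $\bR^{4d}$ in the ordering dictated by the definition of $W$ on $\rdd$, matches $W(u,v)\otimes\overline{W(f,g)}$ after applying the coordinate permutation-and-sign map $\mathfrak{T}_p$ (together with the fact that $\overline{W(f,g)(x,\xi)} = W(f,g)(x,-\xi)$, or rather the conjugate-symmetry identity $\overline{W(f,g)} = W(g,f)$ composed with a frequency reflection). This identifies $\la k_T \text{-expression}\ra$ with $\la \mathfrak{T}_p W k_T,\, W(u,v)\otimes\overline{W(f,g)}\ra$, which is exactly \eqref{kerFormula} with $k = \mathfrak{T}_p W k_T$; continuity of $W$ on $\cS'(\rdd)$ (Proposition \ref{prop25}(iii)) and of $\mathfrak{T}_p$ guarantees $k\in\cS'(\bR^{4d})$.

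For \textbf{uniqueness}, suppose $k_1,k_2\in\cS'(\bR^{4d})$ both satisfy \eqref{kerFormula}; then $\la k_1-k_2, W(u,v)\otimes\overline{W(f,g)}\ra = 0$ for all $f,g,u,v\in\cS(\rd)$. By Lemma \ref{lemmaDensity}, the span of such tensor products $W(u,v)\otimes\overline{W(f,g)}$ is dense in $\cS(\bR^{4d})$, so the tempered distribution $k_1-k_2$ annihilates a dense subspace and hence vanishes. For \textbf{existence}, the candidate $k = \mathfrak{T}_p W k_T$ constructed above works, and since every continuous linear $T:\cS(\rd)\to\cS'(\rd)$ has a Schwartz kernel $k_T\in\cS'(\rdd)$, the Wigner kernel exists for all such $T$.

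The main obstacle I expect is the careful variable-tracking in the step identifying $W(u\otimes\bar f, v\otimes\bar g)$ on $\bR^{4d}$ with the rearranged tensor $W(u,v)\otimes\overline{W(f,g)}$: one must check that the integral defining the $2d$-dimensional cross-Wigner distribution of the tensor products factors into a product of two $d$-dimensional cross-Wigner distributions precisely after the permutation $(x,\xi,y,\eta)\mapsto(x,y,\xi,-\eta)$ and the conjugation, with the signs in the exponentials $e^{-2\pi i t\xi}$ matching correctly across the reflection $\eta\mapsto-\eta$. This is a routine but error-prone computation; writing $W$ via the split $\hat A_{1/2} = \cF_2\mathfrak{T}_L$ from \eqref{A12prodotto} makes the factorization transparent, since both the dilation $\mathfrak{T}_L$ and the partial Fourier transform $\cF_2$ act diagonally on tensor products up to the coordinate swap encoded in $\mathfrak{T}_p$. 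Everything else — Moyal's identity, the density lemma, continuity of the relevant maps — is quoted directly from the results already established.
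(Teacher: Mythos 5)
Your proposal is correct, and it reaches the same formula $k = \mathfrak{T}_p W k_T$ by what is at bottom the same computation as the paper, but organized more economically. The paper first builds the explicit $\bR^{4d}$ operator $\hat\cA_W = \cF_{2,4}\mathfrak{T}_L'$ satisfying $\hat\cA_W\bigl((f_1\otimes\bar g_1)\otimes(f_2\otimes\bar g_2)\bigr)=W(f_1,g_1)\otimes W(f_2,g_2)$, threads it through a chain of coordinate permutations $\mathfrak{T}_{2,3}$, $\cI_2$, $\mathfrak{T}$ to arrive at the candidate kernel $\mathfrak{T}\hat\cA_W\mathfrak{T}_{2,3}(k_T\otimes\overline{k_T})$, and then performs a direct change-of-variables integral to identify this with $\mathfrak{T}_pWk_T$. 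You instead apply the Moyal/isometry relation on $\rdd$ to recognize $\la k_T,u\otimes\bar f\ra\,\overline{\la k_T,v\otimes\bar g\ra} = \la Wk_T,\,W(u\otimes\bar f,v\otimes\bar g)\ra$, and then factorize the $4d$-dimensional cross-Wigner of tensor products as $W(u\otimes\bar f,v\otimes\bar g)=\mathfrak{T}_p\bigl(W(u,v)\otimes\overline{W(f,g)}\bigr)$, getting the answer directly. This makes the appearance of $Wk_T$ conceptually transparent rather than something read off the end of a computation; the price is that the tensor-factorization step carries the same sign bookkeeping that the paper hides inside the computation of $\hat\cA_W\mathfrak{T}_{2,3}$.

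Two small cautions, neither of which is a gap. First, you cite Proposition \ref{prop25}(iv), which is the $L^2$ version of Moyal's identity; since $k_T\in\cS'(\rdd)$ you actually need its $\la\cS',\cS\ra$ extension, which is exactly the metaplectic isometry \eqref{MoyalDistr} (also what the paper uses implicitly). Second, the identity you write in passing, $\overline{W(f,g)(x,\xi)}=W(f,g)(x,-\xi)$, is false for $f\neq g$; the identity the factorization actually requires is $W(\bar f,\bar g)(y,\eta)=\overline{W(f,g)(y,-\eta)}$ (equivalently, $\overline{W(f,g)}=W(g,f)$ combined with a reflection and swap). You flag this step as error-prone and self-correct, and the final answer is the right one, so this is a matter of narration rather than substance.
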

\begin{proof}
Let us compute the unitary operator $\hat\cA_W$ such that $$\hat\cA_W((f_1\otimes \bar g_1)\otimes(f_2\otimes\bar g_2))=W(f_1,g_1)\otimes W(f_2,g_2),$$ for every $f_1,f_2,g_1,g_2\in L^2(\rd)$. For, let $f_1,f_2,g_1,g_2\in L^2(\rd)$, then:
	\begin{align*}
		W(f_1,g_1)\otimes W(f_2,g_2)&=\cF_2\mathfrak{T}_L(f_1\otimes\bar g_1)\otimes\cF_2\mathfrak{T}_L(f_2\otimes\bar g_2)\\
		&=\cF_{2,4}\mathfrak{T}_L'(f_1\otimes\bar g_1\otimes f_2\otimes\bar g_2),
	\end{align*}
	where $\cF_{2,4}F(x,\xi,y,\eta)=\int_{\rdd}F(x,t,y,r)e^{-2\pi i(t\xi+r\eta)}dtdr$ is the partial Fourier transform with respect to the second and fourth variables, and 
	$\mathfrak{T}_L'F(x,\xi,y,\eta)=F(x+\xi/2,x-\xi/2,y+\eta/2,y-\eta/2)$. 
	Therefore,
	\begin{equation}\label{DefAW}
	\hat\cA_W=\cF_{2,4}\mathfrak{T}_L'.
	\end{equation}
	
	For $f_1,f_2,g_1,g_2\in\cS(\rd)$, we have:
	\begin{align*}
		\la Tf_1,g_1 \ra_{\la\cS',\cS\ra}\overline{\la Tf_2,g_2\ra}_{\la\cS',\cS\ra}&=\la k_T,g_1\otimes\bar f_1\ra_{\la\cS',\cS\ra}\overline{\la k_T,g_2\otimes\bar f_2 \ra}_{\la\cS',\cS\ra}\\
		&=\la k_T\otimes\overline{{k_T}},g_1\otimes\bar f_1\otimes \bar g_2\otimes f_2\ra_{\la\cS',\cS\ra}\\
		&=\la k_T\otimes\overline{k_T},\mathfrak{T}_{2,3}(g_1\otimes\bar g_2\otimes \bar f_1\otimes  f_2)\ra_{\la\cS',\cS\ra}\\
		&=\la \mathfrak{T}_{2,3}(k_T\otimes\overline{k_T}),g_1\otimes\bar g_2\otimes \bar f_1\otimes f_2\ra_{\la\cS',\cS\ra}\\
		&=\la \widehat{\cA}_W\mathfrak{T}_{2,3}(k_T\otimes\overline{k_T}),W(g_1,g_2)\otimes W(\bar f_1,\bar f_2)\ra_{\la\cS',\cS\ra}\\
		&=\la \widehat{\cA}_W\mathfrak{T}_{2,3}(k_T\otimes\overline{k_T}),W(g_1,g_2)\otimes \mathcal{I}_2\overline{W( f_1, f_2)}\ra_{\la\cS',\cS\ra}\\
		&=\la \mathfrak{T}\widehat{\cA}_W\mathfrak{T}_{2,3}(k_T\otimes\overline{k_T}),W(g_1,g_2)\otimes \overline{W( f_1, f_2)}\ra_{\la\cS',\cS\ra},
	\end{align*}
	where $\mathfrak{T}_{2,3}F(x,\xi,y,\eta)=F(x,y,\xi,\eta)$, $\cI_2F(x,\xi)=F(x,-\xi)$ and $\mathfrak{T}F(x,\xi,y,\eta)=F(x,\xi,y,-\eta)$. Therefore, $k:=\mathfrak{T}\widehat{\cA}_W\mathfrak{T}_{2,3}(k_T\otimes\overline{k_T})$ is a Wigner kernel for $T$. We prove that $k$ satisfies \eqref{nuclei}: 
	\begin{align*}
		k(x,\xi,y,\eta)&=\mathfrak{T}\cF_{2,4}\mathfrak{T}_L'\mathfrak{T}_{2,3}(k_T\otimes \bar k_T)(x,\xi,y,\eta)\\
		&= \cF_{2,4}\mathfrak{T}_L'\mathfrak{T}_{2,3}(k_T\otimes \bar k_T)(x,\xi,y,-\eta) \\
		&=\int_{\rdd}\mathfrak{T}_L'\mathfrak{T}_{2,3}(k_T\otimes\overline{k_T})(x,t,y,r)e^{-2\pi i(t \xi-r  \eta)}dtdr\\
		&=\int_{\rdd}\mathfrak{T}_{2,3}(k_T\otimes\overline{k_T})(x+t/2,x-t/2,y+r/2,y-r/2)e^{-2\pi i(t \xi-r  \eta)}dtdr\\
		&=\int_{\rdd}(k_T\otimes\overline{k_T})(x+t/2,y+r/2,x-t/2,y-r/2)e^{-2\pi i(t \xi-r  \eta)}dtdr\\
		&=\int_{\rdd}k_T(x+t/2,y+r/2)\overline{k_T(x-t/2,y-r/2)}e^{-2\pi i(t \xi-r  \eta)}dtdr\\
		&=Wk_T(x,y,\xi,-\eta).
	\end{align*}
	This gives \eqref{nuclei}. It remains to prove the uniqueness of the Wigner kernel. For, let $k_1,k_2\in\cS'(\bR^{4d})$ be two Wigner kernels of $T$. Then, for every $f,g,u,v\in\cS(\rd)$,
	\begin{align*}
		\la k_1-k_2,W(u,v)\otimes\overline{W(f,g)}\ra_{\la\cS',\cS\ra}&=\la W(Tf,Tg),W(u,v)\ra_{\la\cS',\cS\ra}\\
		&-\la W(Tf,Tg),W(u,v)\ra_{\la\cS',\cS\ra}=0.
	\end{align*}
	It follows by Lemma \ref{lemmaDensity} that
	\[
		\la k_1-k_2,\Phi\ra_{\la\cS',\cS\ra}=0, \qquad \Phi\in\cS(\bR^{4d}),
	\]
	i.e., $k_1=k_2$.
\end{proof}


\begin{corollary} Under the assumptions of Theorem \ref{3.3},\\
	(1) $k\in L^2(\bR^{4d})$ if and only if $k_T\in L^2(\rdd)$.\\
	(2) $k\in\cS(\bR^{4d})$ if and only if $k_T\in\cS(\rdd)$.
	\end{corollary}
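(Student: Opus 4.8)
The plan is to deduce both equivalences directly from the structural identity $k=\mathfrak{T}_pWk_T$ established in Theorem \ref{3.3}, using that $\mathfrak{T}_p$ is a benign rearrangement of variables and that the Wigner transform $W$ (here $W = W(f,f)$ applied to the single function $k_T\in\cS'(\rdd)$, viewed as living on $\rdd$) is well-behaved on both $L^2$ and $\cS$.

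First I would record that $\mathfrak{T}_p:F(x,\xi,y,\eta)\mapsto F(x,y,\xi,-\eta)$ is a linear change of variables on $\bR^{4d}$ given by an invertible (indeed unimodular) matrix; hence it is a unitary bijection of $L^2(\bR^{4d})$ and a topological isomorphism of $\cS(\bR^{4d})$ onto itself, and likewise of $\cS'(\bR^{4d})$. Consequently $k\in L^2(\bR^{4d})$ iff $Wk_T\in L^2(\bR^{4d})$, and $k\in\cS(\bR^{4d})$ iff $Wk_T\in\cS(\bR^{4d})$; the problem reduces to the corresponding statements for the Wigner distribution of $k_T$ on $\rdd$ (with $d$ replaced by $2d$ in the ambient dimension).

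Next I would invoke the standard mapping properties of the (cross-)Wigner distribution, which are already available in the paper: by Proposition \ref{prop25}(i), $W=W_{1/2}=\hat A_{1/2}$ maps $L^2\times L^2$ boundedly into $L^2$, and by Moyal's identity (Proposition \ref{prop25}(iv)) $\|Wh\|_{L^2(\bR^{4d})}=\|h\|_{L^2(\rdd)}^2$, so $Wk_T\in L^2(\bR^{4d})$ iff $k_T\in L^2(\rdd)$; this yields part (1). For part (2), Proposition \ref{prop25}(ii)--(iii) gives that $\hat A_{1/2}$ is a topological isomorphism of $\cS$ and of $\cS'$ (being a metaplectic operator, cf. \eqref{A12prodotto} and the explicit inverse \eqref{rem1}), so $Wk_T = \hat A_{1/2}(k_T\otimes\overline{k_T})$ lies in $\cS(\bR^{4d})$ iff $k_T\otimes\overline{k_T}\in\cS(\bR^{4d})$. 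The forward direction of the latter is clear since tensor products of Schwartz functions are Schwartz; for the converse one recovers $k_T$ (up to the harmless conjugation and normalization) by restricting or slicing $k_T\otimes\overline{k_T}$ — e.g. testing against $\delta$-type tensor factors, or noting that $\|k_T\otimes\overline{k_T}\|$ controls $\|k_T\|^2$ in each Schwartz seminorm — so $k_T\in\cS(\rdd)$.

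I expect the only mild obstacle to be the converse implication in part (2): arguing that $k_T\otimes\overline{k_T}\in\cS(\bR^{4d})$ forces $k_T\in\cS(\rdd)$, since one must be slightly careful that $k_T$ could a priori vanish identically (in which case the statement is trivial) and otherwise use that Schwartz seminorms of a nonzero tensor square dominate, up to a fixed nonzero constant, the corresponding seminorms of each factor. Everything else is a direct citation of Theorem \ref{3.3}, the elementary invariance of $\cS$, $L^2$, and $\cS'$ under coordinate permutations/reflections, and Proposition \ref{prop25}.
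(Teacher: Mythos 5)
Your proof is correct. For part (1) you follow essentially the same route as the paper: reduce via the rearrangement $\mathfrak{T}_p$ (unitary on $L^2(\bR^{4d})$ and an automorphism of $\cS(\bR^{4d})$) and conclude by Moyal's identity. For part (2), however, your argument is genuinely different. The paper disposes of the equivalence $k_T\in\cS(\rdd)\Leftrightarrow Wk_T\in\cS(\bR^{4d})$ in one stroke by citing a standard STFT characterization of the Schwartz class (Theorem 11.2.5 of \cite{book} or Theorem 1.2.23 of \cite{Elena-book}). You instead factor $Wk_T=\hat A_{1/2}(k_T\otimes\overline{k_T})$, use that the metaplectic operator $\hat A_{1/2}$ is a topological automorphism of $\cS$, and thereby reduce the claim to the elementary fact that a nonzero tempered distribution $F$ lies in $\cS(\rdd)$ if and only if $F\otimes\bar F\in\cS(\bR^{4d})$. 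The forward direction is immediate; your sketch of the converse (``testing against $\delta$-type tensor factors'') is loose but is easily made precise by a slicing argument: pick $\psi\in\cS(\rdd)$ with $c:=\la\bar F,\psi\ra\neq0$; then $\la F,\phi\ra=c^{-1}\la F\otimes\bar F,\phi\otimes\psi\ra$ for every $\phi$, so $F$ is $c^{-1}$ times the partial pairing of $F\otimes\bar F\in\cS(\bR^{4d})$ against $\psi$, which is visibly a Schwartz function on $\rdd$. Your route trades the external STFT citation for this self-contained slicing step; it is slightly longer but more elementary, and the same slicing observation (combined with unitarity of $\hat A_{1/2}$ on $L^2$ and Cauchy--Schwarz) also makes fully explicit the converse implication in part (1), a direction both you and the paper leave somewhat implicit after invoking Moyal's identity.
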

\begin{proof}
\textit{(1)} The equivalence  follows almost immediately by Moyal's identity:
$$ \|k_T\|^2_{L^2(\rdd)}=\|W k_T\|_{L^2(\bR^{4d})},$$
and a change of variables which gives
$$\|Wk_T\|_{L^2(\bR^{4d})}=\|\mathfrak{T}_pk\|_{L^2(\bR^{4d})}=\|k\|_{L^2(\bR^{4d})}.
$$
\textit{(2)} The equivalence $k_T\in \cS(\rdd)\Leftrightarrow  Wk_T \in \cS(\bR^{4d})$ follows  from the corresponding equivalence for the STFT, cf. \cite[Theorem 11.2.5]{book} or \cite[Theorem 1.2.23]{Elena-book}. Finally, observe that $\mathfrak{T}_p$ is a topological isomorphism on $\cS(\bR^{4d})$.
 \end{proof}

\section{Properties of FIO($S$)}\label{sec:Section4}
This part is devoted to illustrate the main properties of the class $FIO(S)$.
First, we show that $FIO(S)\subset B(L^2(\rd))$.

\begin{theorem}\label{Piold}
	An operator $T\in  FIO(S)$ is bounded on $\lrd$.
\end{theorem}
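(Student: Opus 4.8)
The plan is to exploit the defining relation $W(Tf,Tg)=KW(f,g)$ together with the structural hypothesis that the Wigner kernel factors as $k(z,w)=h(z,Sw)$ with $h$ the kernel of a pseudodifferential operator with symbol in $S^0_{0,0}(\bR^{4d})$. The guiding identity is Proposition \ref{prop25}(v) (Moyal's identity for the Wigner distribution): $\|Wf\|_{L^2(\rdd)}=\|f\|_{L^2(\rd)}^2$ for $f\in L^2(\rd)$. Hence to control $\|Tf\|_{L^2(\rd)}^2=\|W(Tf)\|_{L^2(\rdd)}$ it suffices to control $\|KWf\|_{L^2(\rdd)}$, and for that it is enough to show that $K$ — the operator with kernel $k(z,w)=h(z,Sw)$ — is bounded on $L^2(\rdd)$.

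First I would reduce the boundedness of $K$ on $L^2(\rdd)$ to the boundedness of the pseudodifferential operator with kernel $h$. Writing $K = \operatorname{Op}(h)\circ \mathfrak{S}$, where $\mathfrak{S}$ is the unitary change of variables on $L^2(\rdd)$ induced by $S\in Sp(d,\bR)$ (unitary because $\det S=1$), it is enough to bound $\operatorname{Op}(h)$. Since $h$ is the Schwartz kernel of a pseudodifferential operator whose Kohn--Nirenberg symbol lies in the Hörmander class $S^0_{0,0}(\bR^{4d})$, the Calderón--Vaillancourt theorem applies: operators with symbols in $S^0_{0,0}$ are bounded on $L^2$. (The quantization used is immaterial here, exactly as noted after Definition \ref{C6T1.1old}, since membership in $S^0_{0,0}$ is quantization-independent by \cite[Proposition 1.2.5 and Remark 1.2.6]{NR}.) This yields $\|K\|_{B(L^2(\rdd))}<\infty$.

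Next I would assemble the estimate. For $f\in\cS(\rd)$, using Moyal's identity twice and the kernel relation \eqref{I4},
\begin{align*}
\|Tf\|_{L^2(\rd)}^2 &= \|W(Tf)\|_{L^2(\rdd)} = \|KW(f,f)\|_{L^2(\rdd)}\\
&\leq \|K\|_{B(L^2(\rdd))}\,\|Wf\|_{L^2(\rdd)} = \|K\|_{B(L^2(\rdd))}\,\|f\|_{L^2(\rd)}^2,
\end{align*}
so $\|Tf\|_{L^2(\rd)}\leq \|K\|_{B(L^2(\rdd))}^{1/2}\|f\|_{L^2(\rd)}$ for all $f\in\cS(\rd)$, and $T$ extends to a bounded operator on $L^2(\rd)$ by density.

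The main obstacle is the justification that the relation \eqref{I4} can legitimately be applied in $L^2$ with $Wf$ in place of a generic $W(f,g)$, i.e. that the diagonal restriction $f=g$ is compatible with the distributional identity \eqref{kerFormula}, and that $KWf$ is a genuine $L^2$ function rather than merely a tempered distribution — this is precisely where one needs $k$ to come from a Calderón--Vaillancourt-type kernel so that $K$ maps $L^2(\rdd)$ to $L^2(\rdd)$ and the composition $W(Tf)=KWf$ holds as an $L^2$ identity. One should also check the measurability/continuity needed to pass from $\cS(\rd)$ to $L^2(\rd)$, but this is routine once $K$ is bounded. A secondary point worth a line is verifying that the change of variables $F\mapsto F(S^{-1}\cdot)$ is exactly the unitary $\mathfrak{S}$ intertwining $k(z,w)=h(z,Sw)$ with $\operatorname{Op}(h)$; this is immediate from $|\det S|=1$.
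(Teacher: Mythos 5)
Your proof is correct and follows essentially the same route as the paper: Moyal's identity to pass between $\|Tf\|_{L^2}^2$ and $\|W(Tf)\|_{L^2}$, then boundedness of $K$ on $L^2(\rdd)$ obtained by combining Calderón--Vaillancourt for the $S^0_{0,0}$ symbol with the fact that precomposition by the symplectic (hence unimodular) map $S$ is unitary, and finally density of $\cS(\rd)$ in $L^2(\rd)$. The only cosmetic difference is that you make the factorization $K=\operatorname{Op}(h)\circ\mathfrak{S}$ explicit, whereas the paper performs the change of variables $w'=Sw$ directly in the norm computation.
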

\begin{proof}
	For $g\in \lrd$ we recall \cite[Chapter 1]{Elena-book} that the Wigner $Wg\in\lrdd$ and by Moyal's identity:
$$\|Wg\|_{L^2(\rdd)}=\|g\|^2_{L^2(\rd)}.$$
	Using \eqref{I4}, for any $f\in\cS(\rd)$,
	$$\|Tf\|^2_{L^2(\rd)}=\|W(Tf)\|_{L^2(\rdd)} \leq \|K\|_{op}\|Wf\|_{L^2(\rdd)}=\|K\|_{op}\|f\|^2_{L^2(\rd)},
	$$
	since, by Definition \ref{C6T1.1old},  we have $k(z,w)=h(z,Sw)$, and $h$ is the kernel of a pseudodifferential operator with symbol in $S^0_{0,0}(\bR^{4d})$ which is bounded on   $L^2(\rdd)$ (see, e.g., \cite{A8,charly06}). This implies that $k$ is a kernel of an operator bounded on $\lrdd$, because the change of variables $w'=Sw$ gives 
	\begin{equation*}
		\|F(S\cdot)\|^2_{L^2(\rdd)}=\int_{\rdd}|F(Sw)|^2\,dw=\int_{\rdd}|F(w')|^2 dw'	=\|F\|^2_{L^2(\rdd)}
	\end{equation*}
(recall that $S$ is a symplectic matrix so that   $|\det S^{-1}|=1$). Since $\cS(\rd)$ is a dense subspace of $L^2(\rd)$ we infer $T\in B(L^2(\rd))$ with the norm estimate $\|T\|_{op}\lesssim \|K\|^{1/2}_{op}$. 
\end{proof}


Thus,  $\bigcup_{S\in Sp(d,\bR)}FIO(S)\subset B(\lrd)$. Let us show that the class  $\bigcup_{S\in Sp(d,\bR)}FIO(S)$ is a Wiener algebra. 
	\begin{theorem}\label{Pii}
	If $T_i\in FIO(S_i)$, $i=1,2$, then $T_1 T_2\in
	FIO(S_1 S_2)$.
\end{theorem}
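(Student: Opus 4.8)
The plan is to compute the Wigner kernel of the composition $T_1T_2$ directly from the defining relation \eqref{I3}. By definition, $W(T_if,T_ig) = K_iW(f,g)$ where $K_i$ has kernel $k_i(z,w) = h_i(z,S_iw)$ with $h_i$ the kernel of a pseudodifferential operator with symbol in $S^0_{0,0}(\bR^{4d})$. Applying the relation twice,
\[
	W(T_1T_2f,T_1T_2g) = K_1W(T_2f,T_2g) = K_1K_2W(f,g),
\]
so the Wigner kernel of $T_1T_2$ is the kernel $k$ of the operator $K_1K_2$, and it only remains to show $k(z,w) = h(z,S_1S_2w)$ for some $h$ that is the kernel of a pseudodifferential operator with symbol in $S^0_{0,0}(\bR^{4d})$.

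Next I would carry out the composition at the level of kernels. Write $k(z,w) = \int_{\rdd} k_1(z,u)k_2(u,w)\,du = \int_{\rdd} h_1(z,S_1u)h_2(u,S_2w)\,du$, and substitute $u' = S_1u$ (so $du' = du$ since $|\det S_1| = 1$) to get
\[
	k(z,w) = \int_{\rdd} h_1(z,u')h_2(S_1^{-1}u',S_2w)\,du'.
\]
Now factor $h_2(S_1^{-1}u',S_2w) = h_2(S_1^{-1}u', S_1^{-1}(S_1S_2w))$, i.e. set $\tilde h_2(v,w') := h_2(S_1^{-1}v,S_1^{-1}w')$; since $\begin{pmatrix}S_1^{-1} & 0\\ 0 & S_1^{-1}\end{pmatrix}$ is a linear change of variables, $\tilde h_2$ is again the kernel of a pseudodifferential operator with symbol in $S^0_{0,0}(\bR^{4d})$ (the Hörmander class $S^0_{0,0}$ is invariant under linear changes of variables, and the property is quantization-independent as noted after Definition \ref{C6T1.1old}). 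Then
\[
	k(z,w) = \int_{\rdd} h_1(z,u')\tilde h_2(u',S_1S_2w)\,du' =: h(z,S_1S_2w),
\]
where $h(z,w') = \int_{\rdd} h_1(z,u')\tilde h_2(u',w')\,du'$ is the kernel of the composition of the two pseudodifferential operators.

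The main obstacle is the final step: showing that $h$ is itself the kernel of a pseudodifferential operator with symbol in $S^0_{0,0}(\bR^{4d})$, i.e. that $S^0_{0,0}$ is a \emph{composition algebra}. This is the Calderón--Vaillancourt-type closure property for the class $S^0_{0,0}$, which is a known but nontrivial fact; I would invoke the symbolic calculus for $S^0_{0,0}$ (e.g.\ \cite{A8,charly06} or \cite{NR}), noting that $h_1$ and $\tilde h_2$ correspond to operators with symbols in $S^0_{0,0}(\bR^{4d})$, hence so does their composition, and then translate back to kernels. One should be careful that the composition integral converges in the appropriate (distributional / oscillatory) sense — this is handled by the pseudodifferential calculus rather than by absolute convergence — and that the change of variables $u\mapsto S_1u$ is legitimate at the level of the associated operators on $L^2(\rdd)$, which it is because $\mathfrak{T}_{S_1}$ (conjugation by the corresponding metaplectic-type dilation) is unitary. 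Assembling these pieces yields $T_1T_2 \in FIO(S_1S_2)$.
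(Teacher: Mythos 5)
The proposal is correct and takes essentially the same approach as the paper: compose the Wigner kernels, change variables $u'=S_1u$, absorb the $S_1^{-1}$-rescaling into a modified kernel $\tilde h_2$ (which still corresponds to an $S^0_{0,0}(\bR^{4d})$ symbol because $S^0_{0,0}$ is invariant under linear invertible changes of the phase-space variables), and then invoke the composition-algebra property of $S^0_{0,0}$ pseudodifferential operators. The paper spells out the same steps through the explicit Weyl-symbol correspondence $k=\mathfrak{T}_L^{-1}\cF_2^{-1}\sigma$, but the structure and the key invocations are identical.
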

\begin{proof}
	Using the Wigner representation in \eqref{I4} we can write 
	\begin{equation}\label{I4iie}
		W(T_1T_2f,T_1T_2g)(z) = \intrdd k_{1}(z,w) W(T_2f,T_2g)(w)\,dw,\quad f,g\in\cS(\rd),
	\end{equation}
	where $k_{1}(z,w)$ is the Wigner kernel of the operator $T_1$, satisfying \eqref{nucleoFIO} with symplectic matrix $S_1$. Similarly, 
	\begin{equation*}\label{I4ii}
		W(T_2f,T_2g)(w) = \intrdd k_{2}(w,u) W(f,g)(u)\,du,
	\end{equation*}
	with $k_{2}(w,u)$ being the Wigner kernel of $T_2$ satisfying \eqref{nucleoFIO} with symplectic transformation $S_2$. Substituting the expression of $W(T_2f,T_2g)(w)$ in \eqref{I4iie} we obtain
	\begin{equation}\label{e0old}
		W(T_1T_2f,T_1T_2g)(z) = \int_{\bR^{4d}} k_{1}(z,w)  k_{2}(w,u) W(f,g)(u)\,du\,dw,
	\end{equation}
	with, by  \eqref{nucleoFIO}, 
	\begin{equation}\label{e1}
		k_{i}(z,w)=h_{i}(z,S_iw),\quad z,w\in \rdd,\,\,i=1,2,
	\end{equation}
	and $h_{i}$ being the kernel of a pseudodifferential operator with symbol in $S^0_{0,0}(\bR^{4d})$.  Recall that  $W(f,g)\in\cS(\rdd)$.  By interpreting the integrals in the distributional sense, we can compute 
	\begin{equation}\label{e00}
		W(T_1T_2f,T_1T_2g)(z) = \int_{\bR^{2d}}\left(\intrdd k_{1}(z,w)  k_{2}(w,u)\,dw\right) W(f,g)(u)\,du.
	\end{equation}
In other words,  the Wigner kernel $k_{1 2}$ of the product $T_1T_2$ is given by
	$$k_{1 2}(z,u):= \intrdd k_{1}(z,w)  k_{2}(w,u)\,dw.
	$$
	We will  show that $k_{12}$ satisfies \eqref{nucleoFIO}.
The integral  kernel of an operator is connected with its Weyl symbol $\sigma$  by the formulae (cf. \cite[Formula (4.5)]{Elena-book}), that hold in $S'(\rdd)$:
$$\sigma= \cF_2\mathfrak{T}_L k\quad\mbox{or}\quad k=\mathfrak{T}_L^{-1}\cF_2^{-1}\sigma,$$
where the operator $\mathfrak{T}_L$ is defined in \eqref{A12prodotto}.
In our context, assuming that $\sigma_1,\sigma_2$ are the Weyl symbols related to the kernels 
$k_{1}, k_{2}$, respectively, we can write 
\begin{align}
		k_{12}(z,u)&:= \intrdd\mathfrak{T}_L^{-1}\cF_2^{-1}\sigma_1(z,S_1w)  \mathfrak{T}_L^{-1}\cF_2^{-1}\sigma_2(w,S_2u)\,dw\notag\\
		&= \intrdd\cF_2^{-1}\sigma_1(\frac{z+S_1w}{2},z-S_1w)  \cF_2^{-1}\sigma_2(\frac{w+S_2u}{2},w-S_2u)\,dw\notag\\
			&= \intrdd\cF_2^{-1}\sigma_1(\frac{z+w'}{2},z-w')  \cF_2^{-1}\sigma_2(\frac{S_1^{-1}w'+S_2u}{2},S_1^{-1}w'-S_2u)\,dw'\notag\\
			&= \intrdd\cF_2^{-1}\sigma_1(\frac{z+w'}{2},z-w')  \cF_2^{-1}\sigma_2(S_1^{-1}(\frac{w'+S_1S_2u}{2}),S_1^{-1}(w'-S_1S_2u))\,dw'\notag\\
			&= \intrdd\mathfrak{T}_L^{-1}\cF_2^{-1}\sigma_1(z,w')  \mathfrak{T}_L^{-1}\cF_2^{-1}\tilde{\sigma}_2(w',S_1S_2u)\,dw'\label{e7},		
\end{align}
where in the last-but-one row we made the change of variables $S_1w=w'$ hence $w=S_1^{-1}w'$ (recall $|\det S_1^{-1}|=1$) and in the last row we defined
$$\tilde{\sigma}_2(w,u):=\sigma_2(S_1^{-1}w,S_1^{-1}u).$$
It is straightforward to check that  $\sigma_2\in S^0_{0,0}(\bR^{4d})$ implies  $\sigma_2(S_1^{-1}\cdot,S_1^{-1}\cdot)\in S^0_{0,0}(\bR^{4d})$.
 Hence we can write
\begin{align*}
	k_{12}(z,u):= \intrdd k_{1}(z,w)  \tilde{k}_{2}(w,u)\,dw
	\end{align*}
with 	\begin{equation*}
\tilde{k}_{2}(z,w)=\tilde{h}_{2}(z,S_1S_2w),\quad z,w\in \rdd,
\end{equation*}
and 
$$\tilde{h}_{2}=\mathfrak{T}_L^{-1}\cF_2^{-1}\tilde{\sigma}_2,
$$
being the kernel of a pseudodifferential operator with symbol $\tilde{\sigma}_2$ in $S^0_{0,0}(\bR^{4d})$.  
The algebra's property of pseudodifferential operators with symbols in $S^0_{0,0}(\rdd)$ (cf. \cite{A8,charly06}) and the equality  in \eqref{e7} imply that the kernel of the product $k_{12}$ can be written as $$k_{12}(z,u)=h_{12}(S_1S_2z,u)=\mathfrak{T}_L^{-1}\cF_2^{-1}{\sigma}_{12}(z,S_1S_2u),$$ for a suitable Weyl symbol ${\sigma}_{12}\in S^0_{0,0}(\bR^{4d})$.  
\end{proof}


Finally, we prove the inverse-closedness of $\bigcup_{S\in Sp(d,\bR)} FIO(S)$, also called spectral invariance or Wiener's property, following the pioneering contributions \cite{A86,A87}.
We use the following preliminary result.
\begin{lemma}\label{elena} Assume $T\in FIO(S)$ invertible on $L^2(\rd)$, then the operator $K$ in \eqref{I3} is invertible on $\lrdd$ with inverse $K^{-1}$ satisfying
	\begin{equation}\label{I3inv}
		K^{-1} W(f,g)=W(T^{-1}f,T^{-1}g), \quad f,g\in\cS(\rd).
	\end{equation}
Namely, $K^{-1}$ is the operator with integral kernel given by the Wigner kernel of $T^{-1}$. 
	\end{lemma}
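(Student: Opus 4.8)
The plan is to unpack what it means for $T$ to be in $FIO(S)$ and bounded-invertible on $L^2(\rd)$, and to transport everything to the level of Wigner distributions. First I would recall from Theorem \ref{Wkernel}(ii)/Corollary that the map $T\mapsto K$ (where $K$ satisfies $W(Tf,Tg)=KW(f,g)$) is well defined at the level of $L^2$: since $T\in FIO(S)$ is bounded on $L^2(\rd)$ by Theorem \ref{Piold}, the operator $K$ with Wigner kernel $k(z,w)=h(z,Sw)$ is bounded on $L^2(\rdd)$, with $\|Kg\|_{op}$ controlled through the $S^0_{0,0}$ calculus. The key structural fact I would exploit is the composition rule: if $T_1\in FIO(S_1)$ and $T_2\in FIO(S_2)$ then $T_1T_2\in FIO(S_1S_2)$, and — crucially for this lemma — at the level of the $K$-operators one has $K_{T_1T_2}=K_{T_1}K_{T_2}$ as operators on $L^2(\rdd)$. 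This is exactly what the composition $k_{12}(z,u)=\intrdd k_1(z,w)k_2(w,u)\,dw$ in the proof of Theorem \ref{Pii} records.

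With that in hand, the argument is essentially formal. Suppose $T$ is invertible on $L^2(\rd)$ with inverse $T^{-1}$, also bounded on $L^2(\rd)$. Applying the identity $W(T^{-1}f,T^{-1}g)$ and the composition rule to the pair $T\circ T^{-1}=\mathrm{Id}$ and $T^{-1}\circ T=\mathrm{Id}$, I get $K_T K_{T^{-1}}=K_{\mathrm{Id}}$ and $K_{T^{-1}}K_T=K_{\mathrm{Id}}$ as operators on $L^2(\rdd)$. Since the identity operator on $\lrd$ has Wigner kernel $\delta_{z-w}$ (its $K$-operator is the identity on $L^2(\rdd)$), this shows $K_T$ is invertible on $L^2(\rdd)$ with $K_T^{-1}=K_{T^{-1}}$. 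The relation \eqref{I3inv} then follows: $K^{-1}W(f,g)=K_{T^{-1}}W(f,g)=W(T^{-1}f,T^{-1}g)$ for $f,g\in\cS(\rd)$, and the integral kernel of $K^{-1}$ is by definition the Wigner kernel of $T^{-1}$.

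One point that deserves care: to apply the composition rule of Theorem \ref{Pii} to $T$ and $T^{-1}$ I should not yet assume $T^{-1}\in FIO(S^{-1})$ — that is the statement this lemma is a stepping stone toward. So I would phrase the composition step purely at the level of the bounded operators $K$: the identity $W(T_1T_2f,T_1T_2g)=K_{T_1}K_{T_2}W(f,g)$ holds for \emph{any} two $L^2$-bounded operators admitting such $K$-operators, simply by iterating \eqref{I3}, and does not require membership in an $FIO(S)$ class. The membership in $FIO(S)$ is only used to guarantee that $K_T$ is bounded on $L^2(\rdd)$ in the first place, via Theorem \ref{Piold} and the $S^0_{0,0}$ boundedness. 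The main (mild) obstacle is therefore bookkeeping: making sure the identities \eqref{I3}, \eqref{I3inv} are justified first on the dense subspace where $W(f,g)\in\cS(\rdd)$ and then extended by density and $L^2$-boundedness of $K_T$, $K_{T^{-1}}$, rather than any deep analytic difficulty. Once the diagram $K_TK_{T^{-1}}=K_{T^{-1}}K_T=\Id_{L^2(\rdd)}$ is in place, the conclusion is immediate.
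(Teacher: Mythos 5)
Your proof follows essentially the same path as the paper's: check that $K$ and the candidate inverse $K_{T^{-1}}$ compose to the identity on the dense span of Wigner distributions, then extend by density. The high-level framing through the abstract composition rule $K_{T_1T_2}=K_{T_1}K_{T_2}$ is a clean way to see this, and your caution against invoking Theorem \ref{Pii} directly (which would be circular, since $T^{-1}\in FIO(S^{-1})$ is not yet known) is exactly the right instinct.

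However, there is one small but genuine gap. You treat the $L^2(\rdd)$-boundedness of $K_{T^{-1}}$ as already available, writing that the identities are "extended by density and $L^2$-boundedness of $K_T$, $K_{T^{-1}}$." But the boundedness of $K_{T^{-1}}$ is precisely what has to be established before the density argument can close: without it, the relations $K_TK_{T^{-1}}=K_{T^{-1}}K_T=I$ on $\mathrm{span}\{W(f,g)\}$ do not yield that $K_T$ has a \emph{bounded} inverse on all of $L^2(\rdd)$. Note that $T^{-1}$ has no reason to map $\cS(\rd)$ into $\cS(\rd)$, so one cannot simply argue that $K_{T^{-1}}$ preserves $\cS(\rdd)$; the $L^2$-bound must be obtained directly. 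The paper does this in one line via Moyal's identity (Proposition \ref{prop25}(iv)):
\begin{equation}
\|K_{T^{-1}}W(f,g)\|_2^2=\|W(T^{-1}f,T^{-1}g)\|_2^2=\|T^{-1}f\|_2^2\,\|T^{-1}g\|_2^2\leq\|T^{-1}\|_{op}^4\,\|W(f,g)\|_2^2,
\end{equation}
which gives $\|K_{T^{-1}}\|_{op}\leq\|T^{-1}\|_{op}^2$. This step is not bookkeeping but the one substantive analytic ingredient of the lemma, and your proof should include it explicitly. Once it is in place, the rest of your argument (the composition rule applied to $TT^{-1}=T^{-1}T=\mathrm{Id}$ on the dense span, then extension by density) is correct and matches the paper's.
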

\begin{proof}
	Since $T\in FIO(S)$, the operator $K$ is bounded on $\lrdd$ (see the proof of Theorem \ref{Piold}). $T$  invertible implies $T^{-1}$ bounded on $\lrd$ so that the operator $K^{-1}$ in \eqref{I3inv} is well defined. Let us prove that $K^{-1}$ is bounded on $\lrdd$.
	By Moyal's identity,
	\begin{align*}
		\|K^{-1}W(f,g)\|_2^2&=\|W(T^{-1}f,T^{-1}g)\|_2^2=\|T^{-1}f\|_2^2\|T^{-1}g\|_2^2\\
		&\leq \|T^{-1}\|_{op}^4\|W(f,g)\|_2^2,
	\end{align*}
	for every $f,g\in\cS(\rd)$, so that $K^{-1}$ extends to a bounded operator on $\lrdd$ with $\|K^{-1}\|_{op}\leq \|T^{-1}\|_{op}^2$.
	
	 Let us show that 
	$KK^{-1}=K^{-1}K=I$, with $I$ the identity on $L^2(\rdd)$.
	By the density of the linear span of $W(f,g)$ in $\cS(\rdd)$ and the density of $\cS(\rdd)$ in $L^2(\rdd)$ it is enough to check the equalities above on $W(f,g)$, with $f,g\in\cS(\rd)$.
	Now, 
	$$ KK^{-1}W(f,g) =KW(T^{-1}f,T^{-1}g)=W(TT^{-1}f,TT^{-1}g)=W(f,g),$$
	for all $f,g\in\cS(\rd)$, so $KK^{-1}=I$. The equality $K^{-1}K=I$ is analogous. 
\end{proof}
\begin{theorem}
	Let $S\in Sp(d,\bR)$ and $T\in FIO(S)$ be invertible on $L^2(\rd)$. Then, $T^{-1}\in FIO(S^{-1})$.
\end{theorem}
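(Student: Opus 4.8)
The plan is to combine Lemma~\ref{elena} with the algebraic machinery already developed for the composition of Wigner kernels in Theorem~\ref{Pii}. The starting point is that $T$ invertible on $L^2(\rd)$ gives, via Lemma~\ref{elena}, a bounded operator $K^{-1}$ on $L^2(\rdd)$ whose integral kernel is precisely the Wigner kernel of $T^{-1}$; call it $k^{-1}$. So $T^{-1}$ is a continuous linear operator with a genuine Wigner kernel, and what we must verify is that this kernel has the structure \eqref{nucleoFIO} with the symplectic matrix $S^{-1}$, i.e.\ that $k^{-1}(z,w)=h^{-1}(z,S^{-1}w)$ for $h^{-1}$ the kernel of a pseudodifferential operator with Weyl symbol in $S^0_{0,0}(\bR^{4d})$.

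First I would pass to Weyl symbols. Write $k(z,w)=h(z,Sw)$ with $h=\mathfrak{T}_L^{-1}\cF_2^{-1}\sigma$, $\sigma\in S^0_{0,0}(\bR^{4d})$, exactly as in the proof of Theorem~\ref{Pii}. The relation $K K^{-1}=K^{-1}K=I$ from Lemma~\ref{elena}, read at the level of kernels, says that the kernel $k^{-1}$ of $K^{-1}$ is the convolution-type inverse of $k$ under the composition law $(k_1\# k_2)(z,u)=\intrdd k_1(z,w)k_2(w,u)\,dw$. In the computation of Theorem~\ref{Pii} it was shown that this composition, restricted to kernels of the form $h(z,Sw)$ with $h$ a $S^0_{0,0}$-kernel, corresponds (after the substitution $Sw=w'$ and the harmless rescaling $\tilde\sigma(w,u)=\sigma(S^{-1}w,S^{-1}u)$, which preserves $S^0_{0,0}$) to the ordinary composition of pseudodifferential operators on $\rdd$ composed with the matrix bookkeeping $S_1,S_2\mapsto S_1S_2$. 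Hence the map
\[
\Psi: S^0_{0,0}\text{-kernel } h \ \longmapsto \ \text{operator with kernel } h(z,Sw)
\]
intertwines the $\#$-product with the ordinary operator product up to the conjugation $\sigma\mapsto\tilde\sigma$, and under this correspondence the identity kernel of $K$ corresponds to the identity pseudodifferential operator (symbol $\equiv 1\in S^0_{0,0}$) paired with the matrix $I$. Therefore the relation $k^{-1}\# k=k\# k^{-1}=\delta$ forces the Weyl symbol of the underlying pseudodifferential operator attached to $k^{-1}$ to be a two-sided inverse, in the algebra of pseudodifferential operators, of the symbol attached to $k$, and the accompanying symplectic matrix to be $S^{-1}$.

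The main obstacle — and the step I would spell out carefully rather than wave through — is the invertibility of the underlying pseudodifferential operator \emph{within} the class $Op_w(S^0_{0,0})$. That is precisely the inverse-closedness (Wiener's property) of the algebra $Op_w(S^0_{0,0}(\bR^{4d}))$: one knows the abstract inverse $K^{-1}$ is bounded on $L^2(\rdd)$, but one needs its Weyl symbol to again lie in $S^0_{0,0}$. This is a known result (the Beals-type / Gröchenig--Rzeszotnik spectral invariance of $S^0_{0,0}$, cf.\ the references \cite{A8,charly06,A86,A87} already cited), so I would invoke it: the pseudodifferential operator $P$ on $L^2(\rdd)$ with symbol in $S^0_{0,0}$ associated to $k$ via $\Psi$ is, modulo the symplectic factor, conjugate to $K$; since $K$ is invertible on $L^2(\rdd)$ and conjugation by $\mathfrak{T}_S$ (a bounded invertible change of variables) preserves boundedness and invertibility, $P$ is invertible on $L^2(\rdd)$, whence $P^{-1}\in Op_w(S^0_{0,0})$ by Wiener's property. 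Unwinding $\Psi^{-1}$ then exhibits $k^{-1}(z,w)=h^{-1}(z,S^{-1}w)$ with $h^{-1}$ the kernel of $P^{-1}$ (up to the symbol rescaling by $S$, again in $S^0_{0,0}$), which is exactly the statement $T^{-1}\in FIO(S^{-1})$. The only care needed beyond citing Wiener's property is tracking that every auxiliary linear substitution occurring ($Sw=w'$, the rescaling $\tilde\sigma$, and $\mathfrak{T}_L$, $\cF_2$ conjugations) is an isomorphism of $S^0_{0,0}$ onto itself, which follows from \cite[Proposition 1.2.5 and Remark 1.2.6]{NR} as already noted after Definition~\ref{C6T1.1old}.
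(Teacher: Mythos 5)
Your proposal is correct, and it takes a genuinely different route from the paper. The paper proceeds in three stages: first it shows directly from the kernel symmetries of $W(\cdot,\cdot)$ that $T^\ast\in FIO(S^{-1})$ (recorded as Corollary~\ref{aggiunto}); then it forms $P=T^\ast T$, which by the algebra property lies in $FIO(I_{2d\times 2d})$, so that the associated operator $K_P$ is an honest $S^0_{0,0}$-pseudodifferential operator on $L^2(\rdd)$; Lemma~\ref{elena} gives invertibility of $K_P$, the Gr\"ochenig--Rzeszotnik Wiener property then puts the symbol of $K_P^{-1}$ back in $S^0_{0,0}$, and finally $T^{-1}=P^{-1}T^\ast\in FIO(S^{-1})$ by the algebra property. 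You instead observe that the Wigner-kernel condition $k(z,w)=h(z,Sw)$ is exactly the operator factorization $K=H\circ C_{S^{-1}}$, where $H$ is the $S^0_{0,0}$-pseudodifferential operator with kernel $h$ and $C_{S^{-1}}f=f\circ S^{-1}$ is a unitary change of variables; hence $K$ invertible forces $H$ invertible, Wiener's property puts the symbol of $H^{-1}$ in $S^0_{0,0}$, and unwinding $K^{-1}=C_S\circ H^{-1}$ produces $k^{-1}(z,w)=h^{-1}(Sz,w)=\tilde h(z,S^{-1}w)$ with $\tilde h(z,w):=h^{-1}(Sz,Sw)$, again an $S^0_{0,0}$-kernel by linear invariance. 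Your route is more direct and avoids any separate analysis of the adjoint; what the paper's route buys is the adjoint statement as a byproduct and the fact that Wiener's property is invoked only in the clean $S=I$ case, where $FIO(I)$ is manifestly the class of $S^0_{0,0}$-pseudodifferential operators on $L^2(\rdd)$.

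Two small points of hygiene in your writeup. First, the relation between $H$ and $K$ is a one-sided composition with an invertible unitary, not a conjugation; your conclusion (invertibility of one is equivalent to invertibility of the other) still holds, but the phrase ``conjugate to $K$'' is misleading and would be better replaced by the explicit factorization $K=H\circ C_{S^{-1}}$. Second, the preamble about the $\#$-product and the intertwining of $\Psi$ reads as if the Weyl symbol of ``the operator attached to $k^{-1}$'' already exists in $S^0_{0,0}$, which is precisely what Wiener's property must provide; you do flag and repair this in the next paragraph, but the clean version of your argument never needs the $\#$-algebra machinery at all---the direct factorization plus Wiener plus the rescaling $h^{-1}\mapsto h^{-1}(S\cdot,S\cdot)$ is the whole proof.
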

\begin{proof}
	Let $T\in FIO(S)$ have Wigner kernel $k$ and let $h$ be the kernel of the pseudodifferential operator with symbol $\sigma\in S^0_{0,0}(\bR^{4d})$ of Definition \ref{C6T1.1old}. Let $\tilde k$ be the Wigner kernel of $T^\ast$. First, we prove that $T^\ast\in FIO(S^{-1})$ and $\tilde k(z,w)=\cI k(z,w)$, where $\cI k(u,v)=k(v,u)$. For, we use the symmetries of $W(\cdot)$, Theorem \ref{3.3} and the fact that the kernel of $T^\ast$ is $\cI\overline k_T$, where $k_T$ is the kernel of $T$.

	\begin{align*}
		 \tilde k(x,\xi,y,\eta)&=\mathfrak{T}_p W(\cI\bar k_T)(x,\xi,y,\eta)
		 =W(\cI\bar k_T)(x,y,\xi,-\eta)\\
		 &=W\bar k_T(y,x,-\eta,\xi)
		 =Wk_T(y,x,\eta,-\xi)\\
		&=\mathfrak{T}_pWk_T(y,\eta,x,\xi)
		=\mathcal{I}\mathfrak{T}_pWk_T(x,\xi,y,\eta),
	\end{align*}
	where $\mathfrak{T}_pF(x,\xi,y,\eta)=F(x,y,\xi,-\eta)$. Therefore,
	\[
		\tilde k = \mathcal{I}(\mathfrak{T}_pWk_T)=\cI k.
	\]

	Next, by (\ref{nucleoFIO}) we have
	\[
		\tilde k(z,w) = k(w,z)=h(w,Sz)=h (S(S^{-1}w),Sz).
	\]
	
		If the kernel $h$ has symbol in $S^0_{0,0}(\bR^{4d})$ then it is straightforward to show that the rescaled kernel  $h (S\cdot,S\cdot)$ has symbol in $S^0_{0,0}(\bR^{4d})$ as well.
	Defining $\tilde h(z,w):=h(Sw,Sz)$ we have 
	$$\tilde k(z,w)=\tilde h (z,S^{-1}w).$$
In other words,  $T^*\in FIO(S^{-1})$.

	Since $T$ is invertible on $L^2(\rd)$ (and so is $T^\ast$), the operator $P=T^\ast T$ is invertible on $L^2(\rd)$ and, by the algebra property (Theorem \ref{Pii}), $P\in FIO(I_{2d\times 2d})$. In other words, the operator $K_P$ related to $P$ in  \eqref{I3} is a pseudodifferential operator with symbol in $S^{0}_{0,0}(\bR^{4d})$. By Lemma \ref{elena}, $K_P$ is invertible and by  the Wiener's property of Gr\"ochenig and Rzeszotnik \cite[Theorem 6.3]{GR},  $K_P^{-1}$  is a pseudodifferential operator with symbol in $S^{0}_{0,0}(\bR^{4d})$ as well. This implies that the related operator $P^{-1}$ is in the class $FIO(I_{2d\times 2d})$. Applying again the algebra property of Theorem \ref{Pii}, we infer that $T^{-1}=P^{-1}T^\ast\in FIO(S^{-1})$,  as desired.
\end{proof}

As a byproduct we proved:
\begin{corollary}\label{aggiunto}
	If $T\in FIO(S)$ then its adjoint $T^*\in FIO(S^{-1})$.
\end{corollary}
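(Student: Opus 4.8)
The plan is to reuse the computation already carried out inside the proof of the preceding (inverse-closedness) theorem, where it was shown along the way that the adjoint of an element of $FIO(S)$ lies in $FIO(S^{-1})$; here I would present it as a standalone argument. The two ingredients are Theorem \ref{3.3}, relating the Wigner kernel of an operator to its Schwartz kernel via $k=\mathfrak{T}_pWk_T$, and the closedness of the H\"ormander class $S^0_{0,0}$ under transposition and under fixed linear rescalings of the variables.

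First I would pass to Schwartz kernels: if $k_T\in\cS'(\rdd)$ denotes the kernel of $T$, then the kernel of $T^*$ is $\cI\overline{k_T}$, where $\cI F(x,y)=F(y,x)$. Next, invoking Theorem \ref{3.3} together with the covariance of the cross-Wigner distribution under complex conjugation and under permutations and sign changes of the coordinates, I would compute the Wigner kernel $\tilde k$ of $T^*$ and find $\tilde k=\cI k$, i.e. $\tilde k(z,w)=k(w,z)$ for $z,w\in\rdd$. Concretely this is the chain
\begin{align*}
	\tilde k(x,\xi,y,\eta)&=\mathfrak{T}_pW(\cI\overline{k_T})(x,\xi,y,\eta)=W(\cI\overline{k_T})(x,y,\xi,-\eta)\\
	&=W\overline{k_T}(y,x,-\eta,\xi)=Wk_T(y,x,\eta,-\xi)\\
	&=\mathfrak{T}_pWk_T(y,\eta,x,\xi)=\cI k(x,\xi,y,\eta).
\end{align*}

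With $\tilde k=\cI k$ in hand, I would use the defining relation $k(z,w)=h(z,Sw)$, where $h$ is the kernel of a pseudodifferential operator with symbol in $S^0_{0,0}(\bR^{4d})$, to write $\tilde k(z,w)=k(w,z)=h(w,Sz)=h(S(S^{-1}w),Sz)$. Setting $\tilde h(z,w):=h(Sw,Sz)$ then yields $\tilde k(z,w)=\tilde h(z,S^{-1}w)$, which is exactly condition \eqref{nucleoFIO} with the symplectic matrix $S^{-1}$. The last point to verify is that $\tilde h$ is again the kernel of a pseudodifferential operator with symbol in $S^0_{0,0}(\bR^{4d})$: swapping the two $\rdd$-arguments of $h$ corresponds to transposition of the underlying operator, under which $S^0_{0,0}$ is stable, and the linear rescaling $(z,w)\mapsto(Sz,Sw)$ by the fixed matrix $S$ transforms the symbol by a linear change of variables whose derivatives only contribute bounded constants, hence stays in $S^0_{0,0}(\bR^{4d})$ — both observations were already used in the preceding proof. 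This gives $T^*\in FIO(S^{-1})$.

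The only genuinely delicate point is bookkeeping: keeping track of the composed coordinate permutations and sign flips ($\mathfrak{T}_p$, conjugation, $\cI$, and the partial reflection in the $\eta$-variable) so that they collapse to exactly $\cI$ on $\bR^{4d}$; everything else is routine and duplicates a computation already performed above.
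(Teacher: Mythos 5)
Your proposal is correct and follows essentially the same route as the paper: the paper's proof of Corollary \ref{aggiunto} is simply the observation that the claim was already established inside the proof of the inverse-closedness theorem (the passage beginning ``First, we prove that $T^\ast\in FIO(S^{-1})$...''), and your computation of $\tilde k=\cI k$ and the subsequent rewriting $\tilde k(z,w)=\tilde h(z,S^{-1}w)$ with $\tilde h(z,w)=h(Sw,Sz)$ reproduces that argument verbatim. No gaps.
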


\section{FIOs of type I}\label{sec:Section5}
This section is devoted to the study of the Wigner kernel of FIOs of type I. Namely, we consider  
\begin{equation}\label{tipo1}
	T_If(x)=\int_{\rd}e^{2\pi i\Phi(x,\xi)}\sigma(x,\xi)\hat f(\xi)d\xi, \qquad f\in\cS(\rd),
\end{equation}
in the general setting of the tame  phase $\Phi$ in Definition \ref{def2.1} and symbol $\sigma\in S^0_{0,0}(\rdd)$.

Our goal is to compute the Wigner kernel of $T_I$. First, we need some preliminary results.
\begin{proposition}\label{prop1}
Consider the FIO of type I in \eqref{tipo1} with symbol $\sigma\in S^0_{0,0}(\rdd)$ (resp. $\cS'(\rdd)$) and tame phase $\Phi$.
Then, for all $f,g\in\cS(\rd)$,
\[
	(T_If\otimes \bar g)(x,y)=T_1(f\otimes \bar g)(x,y),\quad x,y\in\rd,
\]
where
\begin{equation}\label{T1}
	T_1F(x,y)=\int_{\rdd}e^{2\pi i\Phi_1(x,y,\xi,\eta)}\sigma_1(x,y,\xi,\eta)\hat F(\xi,\eta)d\xi d\eta, \qquad F\in\cS(\rdd),
\end{equation}
with symbol $\sigma_1\in S^0_{0,0}(\bR^{4d})$ (resp. $\cS'(\bR^{4d})$):
\begin{equation*}
	\sigma_1(x,y,\xi,\eta):=\sigma(x,\xi)\otimes1(y,\eta),\quad x,y,\xi,\eta\in \rd,\\
\end{equation*}
and tame phase $\Phi_1$:
\begin{equation*}
	\Phi_1(x,y,\xi,\eta):=\Phi(x,\xi)+y\eta, \quad x,y,\xi,\eta\in \rd.
\end{equation*}
\end{proposition}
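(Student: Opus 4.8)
The plan is to establish the tensor identity by a direct computation, treating the operator $T_I$ acting on the first variable and the identity on the second. First I would write out $(T_If\otimes\bar g)(x,y) = (T_If)(x)\,\overline{g(y)}$ and expand $(T_If)(x)$ using the defining integral \eqref{tipo1}:
\[
(T_If)(x)\,\overline{g(y)} = \overline{g(y)}\int_{\rd}e^{2\pi i\Phi(x,\xi)}\sigma(x,\xi)\hat f(\xi)\,d\xi.
\]
The key observation is that $\overline{g(y)}$ can be recovered as a Fourier integral in a dummy variable $\eta$: since $\overline{g(y)} = \int_{\rd} e^{2\pi i y\eta}\,\widehat{\bar g}(\eta)\,d\eta$ and $\widehat{\bar g}(\eta) = \overline{\hat g(-\eta)}$, and since $\widehat{f\otimes\bar g}(\xi,\eta) = \hat f(\xi)\,\widehat{\bar g}(\eta)$, we can merge the two integrals into a single integral over $\rdd$. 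This immediately produces the right-hand side with $\Phi_1(x,y,\xi,\eta) = \Phi(x,\xi) + y\eta$, amplitude $\sigma_1(x,y,\xi,\eta) = \sigma(x,\xi)\cdot 1$, acting on $\widehat{F}$ with $F = f\otimes\bar g$. I would carry this out carefully to confirm the factor $y\eta$ (not $-y\eta$) in the phase is consistent with our normalization $\widehat{\bar g}(\eta)=\overline{\hat g(-\eta)}$ and the inversion formula used.

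Second, I would verify the two structural claims about $\Phi_1$ and $\sigma_1$. For the symbol: $\sigma\in S^0_{0,0}(\rdd)$ means all derivatives $\partial_x^\alpha\partial_\xi^\beta\sigma$ are bounded; since $\sigma_1$ is $\sigma$ tensored with the constant function $1$, any derivative of $\sigma_1$ in the $(y,\eta)$ variables beyond order zero vanishes, and derivatives in $(x,\xi)$ reduce to derivatives of $\sigma$, so $\sigma_1\in S^0_{0,0}(\bR^{4d})$ is immediate (and similarly the $\cS'$ statement, since $\sigma\otimes 1 \in \cS'(\bR^{4d})$ when $\sigma\in\cS'(\rdd)$). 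For the phase: I must check that $\Phi_1(x,y,\xi,\eta) = \Phi(x,\xi) + y\eta$ is tame in the sense of Definition \ref{def2.1}. Conditions A1 (smoothness) and A2 (boundedness of derivatives of order $\geq 2$) follow because the extra term $y\eta$ is a pure cross-term whose second derivatives are constant (hence bounded) and whose derivatives of order $\geq 3$ vanish, while the $\Phi$ part satisfies A2 by hypothesis. For A3 I compute the mixed Hessian block $\partial^2_{(x,y),(\xi,\eta)}\Phi_1$, which is block-diagonal with blocks $\partial^2_{x,\xi}\Phi$ and $I_d$ (the latter from $y\eta$); its determinant is $\det(\partial^2_{x,\xi}\Phi)\cdot 1$, bounded below by the $\delta$ from A3 for $\Phi$. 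Hence $\Phi_1$ is tame.

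The computation is essentially routine once the Fourier-inversion trick for recovering $\overline{g(y)}$ is in place; the only place requiring care is bookkeeping of signs and the Fourier normalization, to make sure the merged integral genuinely has the form \eqref{T1} with $\widehat{F}(\xi,\eta) = \hat f(\xi)\widehat{\bar g}(\eta)$ rather than some sign-flipped variant. \textbf{The main (minor) obstacle} is thus purely notational: reconciling the convention $\cF f(\xi)=\int f(x)e^{-2\pi i x\xi}dx$ used throughout with the ``type I'' phase convention $e^{+2\pi i\Phi}$ so that the extra variable contributes precisely $+y\eta$; I expect this to resolve cleanly because $\overline{g(y)} = \int e^{2\pi i y\eta}\overline{\hat g(-\eta)}\,d\eta = \int e^{2\pi i y\eta}\,\widehat{\bar g}(\eta)\,d\eta$ exhibits exactly the $e^{2\pi i\,y\eta}$ kernel paired against the $\eta$-component of $\widehat{f\otimes\bar g}$. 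No genuinely hard analytic step is involved; all manipulations can be justified first for $f,g\in\cS(\rd)$ (where all integrals converge absolutely or in the obvious iterated sense) and then, for the $\cS'$ statement, by the continuity of the metaplectic and pseudodifferential constructions already recorded in Section \ref{sec:preliminaries}.
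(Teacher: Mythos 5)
Your proposal is correct and follows essentially the same route as the paper: apply Fourier inversion to write $\overline{g(y)}=\int e^{2\pi i y\eta}\widehat{\bar g}(\eta)\,d\eta$, merge the two integrals into one over $\rdd$, and then verify the $S^0_{0,0}$ membership of $\sigma_1$ and the tameness of $\Phi_1$ by inspecting the block structure of the Hessian (the paper writes out the full $4d\times 4d$ Hessian in equation \eqref{Phi1}). You are in fact slightly more careful than the paper on the bookkeeping $\widehat{\bar g}(\eta)=\overline{\hat g(-\eta)}$, since the paper's final displayed line writes $\overline{\hat g(\eta)}$ where $\widehat{\bar g}(\eta)$ is what actually matches $\widehat{f\otimes\bar g}(\xi,\eta)=\hat f(\xi)\widehat{\bar g}(\eta)$.
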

\begin{proof}
	It is  a straightforward application of the inversion formula in $\cS'(\rd)$. For $f,g\in\cS(\rd)$,
	\[
	\begin{split}
		(T_If\otimes\bar g)(x,y)&=\Big(\int_{\rd}e^{2\pi i\Phi(x,\xi)}\sigma(x,\xi)\hat f(\xi)d\xi\Big)\overline{g(y)}dy\\
	&=\Big(\int_{\rd}e^{2\pi i\Phi(x,\xi)}\sigma(x,\xi)\hat f(\xi)d\xi\Big)\Big(\int_{\rd}\hat{\bar g}(\eta)e^{2\pi i\eta y}d\eta\Big)\\
	&=\int_{\rdd}e^{2\pi i(\Phi(x,\xi)+y\eta)}\sigma(x,\xi)\hat f(\xi)\overline{\hat g(\eta)}d\xi d\eta.
	\end{split}
	\]
	 Note that $\Phi_1 \in\cC^{\infty}(\bR^{4d})$ and satisfies \eqref{phasedecay} and \eqref{detcond}, because
	\begin{equation}\label{Phi1}
		\partial^2_{x,y,\xi,\eta}\Phi_1(x,y,\xi,\eta)=\begin{pmatrix}
			\partial^2_{x,x}\Phi&0&	\partial^2_{x,\xi}\Phi&0\\
				0&0 &0&I\\
					\partial^2_{\xi,x}\Phi&0&	\partial^2_{\xi,\xi}\Phi&0\\
						0 &I&0&0\\
		\end{pmatrix}.
	\end{equation}
	Since $\sigma\in S^0_{0,0}(\rdd)$  (resp. $\cS'(\rdd)$) by assumption and the constant function $1(y,\eta)$ is trivially in $S^0_{0,0}(\rdd)$ (resp. $\cS'(\rdd)$), it follows that $\sigma_1\in S^0_{0,0}(\bR^{4d})$ (resp. $\cS'(\bR^{4d})$).
\end{proof}
\begin{corollary}\label{cor1}
	Under the assumptions of Proposition 	\ref{prop1}, if the symbol $\sigma\in S^0_{0,0}(\rdd)$ the operator $T_1:S(\rdd)\to S'(\rdd)$ extends to a continuous operator on $\lrdd$.
\end{corollary}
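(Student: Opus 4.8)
The plan is to recognize $T_1$ as an ordinary Fourier integral operator of type I on $\rdd$ and then appeal to the classical $L^2$-continuity theory for such operators. By Proposition \ref{prop1}, the operator $T_1$ in \eqref{T1} has phase $\Phi_1(x,y,\xi,\eta)=\Phi(x,\xi)+y\eta$, which is a tame phase on $\bR^{4d}$ (properties A1--A3 of Definition \ref{def2.1} were checked there via the block form \eqref{Phi1}), and symbol $\sigma_1(x,y,\xi,\eta)=\sigma(x,\xi)\otimes 1(y,\eta)$, which belongs to the H\"ormander class $S^0_{0,0}(\bR^{4d})$ as soon as $\sigma\in S^0_{0,0}(\rdd)$. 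Thus $T_1$ meets the hypotheses of the Asada--Fujiwara theorem (\cite{AS78}; see also the time-frequency reformulation in \cite{CGNRJMPA}): a type I FIO with non-degenerate tame phase and symbol in $S^0_{0,0}$ extends to a bounded operator on $L^2$. Applying this in dimension $2d$ gives the claim, and the operator norm of $T_1$ on $L^2(\rdd)$ is controlled by finitely many of the constants $c_{\alpha,\beta}$ of $\sigma_1$ in \eqref{I5} together with the constants $C_\alpha$ and $\delta$ attached to $\Phi_1$.

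If one prefers to avoid quoting the classical theorem in the doubled dimension, there is a self-contained alternative based on the tensor structure already isolated in Proposition \ref{prop1}: since $T_1(f\otimes\bar g)=(T_If)\otimes\bar g$ for all $f,g\in\cS(\rd)$, the operator $T_1$ agrees on the linear span of elementary tensors $f\otimes\bar g$ --- dense in $\cS(\rdd)$, hence in $L^2(\rdd)$ --- with $T_I\otimes\mathrm{Id}$, where $\mathrm{Id}$ is the identity on $L^2(\rd)$. Now $T_I$ is itself a type I FIO with tame phase $\Phi$ and symbol $\sigma\in S^0_{0,0}(\rdd)$, hence bounded on $L^2(\rd)$ by Asada--Fujiwara in dimension $d$; consequently $T_I\otimes\mathrm{Id}$ is bounded on $L^2(\rd)\otimes L^2(\rd)=L^2(\rdd)$ with norm $\|T_I\|$, and $T_1$ extends to this bounded operator.

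Either way, the statement reduces to a single genuine input --- the classical $L^2$-boundedness of type I FIOs with $S^0_{0,0}$ symbol and non-degenerate tame phase, whose pseudodifferential special case $\Phi(x,\xi)=x\xi$ is the Calder\'on--Vaillancourt theorem --- while everything else is bookkeeping already carried out in Proposition \ref{prop1}. There is no serious obstacle here; the only point deserving a moment's attention is the non-degeneracy condition A3 for the enlarged phase $\Phi_1$, which however is immediate from \eqref{Phi1}: the mixed Hessian of $\Phi_1$ in the spatial/frequency variables is block-diagonal with blocks $\partial^2_{x,\xi}\Phi$ and $I$, so that $|\det \partial^2_{(x,y),(\xi,\eta)}\Phi_1|=|\det\partial^2_{x,\xi}\Phi|\ge\delta$.
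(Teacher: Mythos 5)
Your first argument is exactly the paper's proof: Proposition \ref{prop1} exhibits $T_1$ as a type~I FIO on $\rdd$ with tame phase $\Phi_1$ and symbol $\sigma_1\in S^0_{0,0}(\bR^{4d})$, and then one cites the $L^2$-boundedness theorem for such operators (Theorem~3.4 of \cite{CGNRJMPA}, going back to Asada--Fujiwara \cite{AS78}). Your verification of A3 for $\Phi_1$ via the block structure of \eqref{Phi1} is the right bookkeeping, though it is already contained in Proposition \ref{prop1}.

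Your second, ``self-contained'' argument is a genuinely different route. Observing that Proposition \ref{prop1} says precisely $T_1(f\otimes\bar g)=(T_If)\otimes\bar g$, so that $T_1$ coincides with $T_I\otimes\mathrm{Id}$ on the span of elementary tensors, reduces the claim to the $L^2(\rd)$-boundedness of the single operator $T_I$ (again Asada--Fujiwara in dimension $d$) plus the elementary fact that a bounded operator tensored with the identity is bounded on the Hilbert tensor product $L^2(\rd)\otimes L^2(\rd)=L^2(\rdd)$ with the same norm. This buys a cleaner norm identity $\|T_1\|_{op}=\|T_I\|_{op}$ and avoids re-checking A1--A3 in the doubled dimension; the only point you leave implicit is that the $L^2$-extension obtained by density agrees with the original $T_1$ on all of $\cS(\rdd)$, which follows since both are continuous $\cS(\rdd)\to\cS'(\rdd)$ and agree on the dense span of elementary tensors. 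The paper's route has the advantage of being uniform with the parallel Corollary for $T_1'$ and with the later analysis of $T_1T_1'$, where the tensor structure is no longer available.

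Both arguments are correct.
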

\begin{proof}
	Since $T_1$ is a FIO of type I with tame phase and symbol $\sigma_1\in S^0_{0,0}(\bR^{4d})$, the conclusion follows from Theorem 3.4 in \cite{CGNRJMPA}.
\end{proof}

Similarly, we can place the operator $T_I$ on the second factor $g$ of the tensor product, as illustrated below.
\begin{proposition}\label{prop2}
Under the assumptions of Proposition \ref{prop1}, for every $f,g\in\cS(\rd)$,
\[
	(f\otimes \overline{T_Ig})(x,y)=T_1'(f\otimes \bar g)(x,y),\quad x,y\in\rd,
\]
where $T_1'$ is the FIO of type I:
\begin{equation}\label{T1'}
	T_1'F(x,y)=\int_{\rdd}e^{2\pi i\Phi_1'(x,y,\xi,\eta)}\sigma_1'(x,y,\xi,\eta)\hat F(\xi,\eta)d\xi d\eta, \qquad F\in\cS(\rdd),
\end{equation}
with symbol $\sigma_1'\in S^0_{0,0}(\bR^{4d})$ (resp. $\cS'(\bR^{4d})$):
\begin{equation*}
\sigma_1'(x,y,\xi,\eta):=1(x,\xi)\otimes\overline{\sigma(y,-\eta)},\quad x,y,\xi,\eta\in\rd,
\end{equation*}
and tame phase  $\Phi_1'$:
	$$\Phi_1'(x,y,\xi,\eta):=\xi x-\Phi(y,-\eta),\quad x,y,\xi,\eta\in\rd.$$
\end{proposition}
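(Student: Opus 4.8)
The plan is to mirror the proof of Proposition \ref{prop1}, this time using Fourier inversion on the \emph{first} factor and conjugation together with a frequency reflection on the second. First I would write $(f\otimes\overline{T_Ig})(x,y)=f(x)\,\overline{T_Ig(y)}$ and expand $f(x)=\int_{\rd}\hat f(\xi)e^{2\pi i\xi x}\,d\xi$. Conjugating the integral defining $T_Ig$ yields
\[
\overline{T_Ig(y)}=\int_{\rd}e^{-2\pi i\Phi(y,\eta)}\,\overline{\sigma(y,\eta)}\,\overline{\hat g(\eta)}\,d\eta,
\]
and substituting $\overline{\hat g(\eta)}=\widehat{\bar g}(-\eta)$ followed by the change of variable $\eta\mapsto-\eta$ gives
\[
\overline{T_Ig(y)}=\int_{\rd}e^{-2\pi i\Phi(y,-\eta)}\,\overline{\sigma(y,-\eta)}\,\widehat{\bar g}(\eta)\,d\eta.
\]
Multiplying the two one-variable integrals and recognizing that $\widehat{f\otimes\bar g}(\xi,\eta)=\hat f(\xi)\widehat{\bar g}(\eta)$, the resulting double integral is exactly $T_1'(f\otimes\bar g)(x,y)$ with phase $\Phi_1'(x,y,\xi,\eta)=\xi x-\Phi(y,-\eta)$ and symbol $\sigma_1'(x,y,\xi,\eta)=1(x,\xi)\otimes\overline{\sigma(y,-\eta)}$. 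As in Proposition \ref{prop1}, when $\sigma$ is merely a temperate distribution these steps are read as identities in $\cS'$.

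Next I would check that $\Phi_1'$ is tame. Smoothness ($A1$) is immediate. For $A2$, every derivative of $\Phi_1'$ of order $\geq 2$ is either the pullback, under the linear reflection $(y,\eta)\mapsto(y,-\eta)$, of a derivative of $\Phi$ of order $\geq 2$ — hence bounded via \eqref{phasedecay} — or comes from the bilinear term $\xi x$, whose second derivatives are constant and whose higher derivatives vanish. For $A3$, writing the full Hessian $\partial^2_{x,y,\xi,\eta}\Phi_1'$ in analogy with \eqref{Phi1}, its mixed block with respect to the base variables $(x,y)$ and the fiber variables $(\xi,\eta)$ is block-diagonal with diagonal entries $I$ and $(\partial^2_{y,\eta}\Phi)(y,-\eta)$, so that
\[
\left|\det\partial^2_{(x,y),(\xi,\eta)}\Phi_1'\right|=\left|\det\partial^2_{y,\eta}\Phi(y,-\eta)\right|\geq\delta,
\]
with the same $\delta$ as in \eqref{detcond} for $\Phi$. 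Finally, $\sigma_1'\in S^0_{0,0}(\bR^{4d})$ (resp. $\cS'(\bR^{4d})$) because complex conjugation, composition with the linear change of variables $(y,\eta)\mapsto(y,-\eta)$, and tensoring with the constant symbol $1$ each preserve these classes — the same elementary observations used for $\sigma_1$ in Proposition \ref{prop1}.

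The computation is essentially bookkeeping; the one point requiring genuine care is tracking the two reflections introduced by conjugation — the identity $\overline{\hat g(\eta)}=\widehat{\bar g}(-\eta)$ and the subsequent substitution $\eta\mapsto-\eta$ — so that the phase ends up depending on $\Phi(y,-\eta)$ and the symbol on $\overline{\sigma(y,-\eta)}$ rather than on $\Phi(y,\eta)$ and $\overline{\sigma(y,\eta)}$. Once those signs are fixed, the verification of $A1$–$A3$ for $\Phi_1'$ runs verbatim along the lines of \eqref{Phi1}.
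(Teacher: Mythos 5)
Your proof is correct and follows essentially the same approach as the paper's: Fourier inversion of the first factor, conjugation of the $T_Ig$ integral, the identity $\overline{\hat g(\eta)}=\widehat{\bar g}(-\eta)$, and the reflection $\eta\mapsto-\eta$. Your explicit verification of the tameness conditions for $\Phi_1'$ (which the paper compresses to ``the same argument'') is accurate, including the computation of the mixed Hessian and the fact that its determinant reduces to that of $\partial^2_{x,\eta}\Phi$ at $(y,-\eta)$.
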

\begin{proof}
	We use the same pattern as the one of Proposition \ref{prop1}. We write the details for sake of clarity. For $f,g\in\cS(\rd)$,
	\[
		\begin{split}
			(f\otimes \overline{T_Ig})(x,y)&=f(x)\overline{\int_{\rd}e^{2\pi i\Phi(y,\eta)}\sigma(y,\eta)\hat g(\eta)d\eta}\\
			&=\Big(\int_{\rd}\hat f(\xi)e^{2\pi i\xi x}d\xi\Big)\Big(\overline{\int_{\rd}e^{2\pi i\Phi(y,\eta)}\sigma(y,\eta)\hat g(\eta)d\eta}\Big)\\
			&=\int_{\rdd}e^{2\pi i(\xi x-\Phi(y,\eta))}\overline{\sigma(y,\eta)}\hat f(\xi)\overline{\hat g(\eta)}d\xi d\eta\\
			&=\int_{\rdd}e^{2\pi i(\xi x-\Phi(y,\eta))}\overline{\sigma(y,\eta)}\hat f(\xi)\hat{\overline{g}}(-\eta)d\xi d\eta\\
			&=\int_{\rdd}e^{2\pi i(\xi x-\Phi(y,-\eta))}\overline{\sigma(y,-\eta)}\hat f(\xi)\hat{\overline{g}}(\eta)d\xi d\eta.
		\end{split}
	\]
Finally, the same argument of the previous proposition gives $\sigma_1'\in S^0_{0,0}(\bR^{4d})$ if $\sigma\in S^0_{0,0}(\bR^{4d})$ and  $\Phi'_1$  tame. 
\end{proof}
\begin{corollary}
	Under the assumptions of Proposition 	\ref{prop2}, if the symbol $\sigma_1'\in S^0_{0,0}(\bR^{4d})$ the operator $T_1':S(\rdd)\to S'(\rdd)$ extends to a continuous operator on $\lrdd$.
\end{corollary}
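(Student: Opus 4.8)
The plan is to recognize that this corollary is the exact analogue of Corollary \ref{cor1}, with $T_1$ replaced by $T_1'$, and to reduce it to the $L^2$-boundedness theorem for FIOs of type I with tame phase and symbol in $S^0_{0,0}$ established in \cite{CGNRJMPA}. By Proposition \ref{prop2}, the operator $T_1'$ is precisely the FIO of type I in \eqref{T1'}, with symbol $\sigma_1'(x,y,\xi,\eta)=1(x,\xi)\otimes\overline{\sigma(y,-\eta)}$ and phase $\Phi_1'(x,y,\xi,\eta)=\xi x-\Phi(y,-\eta)$. So the first step is simply to record these identifications and to note that, under the hypothesis $\sigma\in S^0_{0,0}(\rdd)$, Proposition \ref{prop2} already guarantees $\sigma_1'\in S^0_{0,0}(\bR^{4d})$.

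Next I would invoke — or rather re-derive, in parallel with \eqref{Phi1} — the fact from Proposition \ref{prop2} that $\Phi_1'$ is a tame phase on $\bR^{4d}$ in the sense of Definition \ref{def2.1}. Property A1 and the decay A2 are inherited immediately from the corresponding properties of $\Phi$, since the linear term $\xi x$ contributes nothing to derivatives of order $\geq 2$. For the non-degeneracy A3 one computes the mixed Hessian $\partial^2_{(x,y),(\xi,\eta)}\Phi_1'$, which is block structured with an identity block coming from $\xi x$ and a block equal, up to sign, to $\partial^2_{y,\eta}\Phi(y,-\eta)$ coming from $-\Phi(y,-\eta)$; hence its determinant is, up to sign, $\det\partial^2_{y,\eta}\Phi(y,-\eta)$, which is bounded below in modulus by the constant $\delta$ from \eqref{detcond}. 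This is the only mildly computational point of the argument, and it is entirely parallel to the Hessian computation carried out in the proof of Proposition \ref{prop1}.

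With these two facts in hand the conclusion is immediate: $T_1'$ is an FIO of type I with tame phase and symbol in $S^0_{0,0}(\bR^{4d})$, hence by \cite[Theorem 3.4]{CGNRJMPA} it extends from $\cS(\rdd)$ to a bounded operator on $L^2(\bR^{4d})$; since $\cS(\rdd)$ is dense in $L^2(\bR^{4d})$ the extension is unique, which is the assertion. I do not expect any genuine obstacle here — the statement really is a restatement of Corollary \ref{cor1} on the second tensor factor. The only point requiring a little care is to make sure the tameness of $\Phi_1'$ is quoted with constants inherited from $\delta$ in \eqref{detcond}, since that lower bound is exactly the hypothesis consumed by \cite[Theorem 3.4]{CGNRJMPA}.
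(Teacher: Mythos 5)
Your argument is correct and is the same as the paper's: identify $T_1'$ as a type~I FIO with tame phase $\Phi_1'$ and symbol $\sigma_1'\in S^0_{0,0}(\bR^{4d})$ (established in Proposition \ref{prop2}) and invoke \cite[Theorem 3.4]{CGNRJMPA}. The extra Hessian verification you spell out is already part of Proposition \ref{prop2} and is not re-derived in the paper's proof, which simply says it is the same argument as Corollary \ref{cor1}.
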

\begin{proof}
It is the same argument as in Corollary \ref{cor1}.
\end{proof}

From the previous issues it is easy to infer the Wigner kernel of $T_I$, as demonstrated in the following propositions.
\begin{proposition}\label{prop12}
	If $\hat\cA\in Mp(2d,\bR)$ and $T_I$ is a FIO of type I in \eqref{tipo1} with $\sigma\in\cS'(\rdd)$, then
	\[
		W_\cA(T_If,g)=\hat\cA T_1\hat\cA^{-1}W_\cA(f,g),\quad f,g\in\cS(\rd),
	\]
	and
	\[
		W_\cA(f,T_Ig)=\hat\cA T_1'\hat\cA^{-1}W_\cA(f,g),
	\]
	where $T_1$ and $T_1'$ are defined in \eqref{T1} and \eqref{T1'}, respectively. As a consequence,
	\[
		W_\cA(T_If,T_Ig)=\hat\cA T_1T_1'\hat\cA^{-1}W_\cA(f,g), \qquad f,g\in\cS(\rd).
	\]
\end{proposition}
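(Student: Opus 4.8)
The plan is to reduce everything to the two tensorization identities of Propositions \ref{prop1} and \ref{prop2}, combined with the defining formula $W_\cA(f,g)=\hat\cA(f\otimes\bar g)$ and the fact that $\hat\cA$ is a topological isomorphism of $\cS'(\rdd)$ (so that $\hat\cA^{-1}W_\cA(f,g)=f\otimes\bar g$), as recorded in Proposition \ref{prop25}.

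First I would establish the two one-sided identities. For $f,g\in\cS(\rd)$, by definition $W_\cA(T_If,g)=\hat\cA(T_If\otimes\bar g)$, and Proposition \ref{prop1} gives $T_If\otimes\bar g=T_1(f\otimes\bar g)$; hence $W_\cA(T_If,g)=\hat\cA T_1(f\otimes\bar g)=\hat\cA T_1\hat\cA^{-1}\hat\cA(f\otimes\bar g)=\hat\cA T_1\hat\cA^{-1}W_\cA(f,g)$. The identity for $W_\cA(f,T_Ig)$ is obtained by the same three lines, invoking Proposition \ref{prop2} (which says $f\otimes\overline{T_Ig}=T_1'(f\otimes\bar g)$) in place of Proposition \ref{prop1}. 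Note these computations make sense even though $T_If,T_Ig\in\cS'(\rd)$, because the relevant tensor products lie in $\cS'(\rdd)$ and $\hat\cA$ extends there.

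For the composition formula, I would write $W_\cA(T_If,T_Ig)=\hat\cA(T_If\otimes\overline{T_Ig})$ and show $T_If\otimes\overline{T_Ig}=T_1T_1'(f\otimes\bar g)$: applying $T_1'$ first, Proposition \ref{prop2} yields $T_1'(f\otimes\bar g)=f\otimes\overline{T_Ig}$, and then applying $T_1$ to this tensor product gives $T_1(f\otimes\overline{T_Ig})=(T_If)\otimes\overline{T_Ig}$, i.e. the identity $T_1(f\otimes h)=(T_If)\otimes h$ with $h=\overline{T_Ig}$. Since $T_1$ and $T_1'$ act on disjoint groups of variables they commute, so the same conclusion follows from the other order. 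Therefore $W_\cA(T_If,T_Ig)=\hat\cA T_1T_1'(f\otimes\bar g)=\hat\cA T_1T_1'\hat\cA^{-1}W_\cA(f,g)$.

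The only genuine subtlety — and thus the step I would treat most carefully — is that Propositions \ref{prop1}--\ref{prop2} are stated for Schwartz inputs, whereas in the composition one needs the tensorization identities when one of the two factors is a tempered distribution (namely $\overline{T_Ig}$, or symmetrically $T_If$). This extension is immediate: the proofs of those propositions only use the Fourier inversion formula on a single factor, which is valid in $\cS'(\rd)$, while the phase and symbol of $T_1$ (resp. $T_1'$) act trivially on the complementary factor; alternatively one invokes the continuity of $T_1$, $T_1'$ on $\cS'(\rdd)$ together with the density of $\cS(\rd)$ in $\cS'(\rd)$ and of $\mathrm{span}\{F\otimes G\}$ in the appropriate space. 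With this remark in place, the three displayed identities follow by plugging in as above.
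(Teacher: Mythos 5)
Your proposal is correct and follows essentially the same route as the paper: both reduce the one-sided identities to the tensorization facts $T_If\otimes\bar g=T_1(f\otimes\bar g)$ and $f\otimes\overline{T_Ig}=T_1'(f\otimes\bar g)$ via $W_\cA(f,g)=\hat\cA(f\otimes\bar g)$, and obtain the third identity by composing the first two. The paper dispatches the composition with a one-line remark (``the third is a composition of the preceding two ones''), whereas you correctly flag and address the only real subtlety, namely that one needs the tensorization identity with one factor in $\cS'(\rd)$; this is a worthwhile elaboration but not a different method.
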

\begin{proof}	Using \eqref{T1},
	\[
		\begin{split}
			W_\cA(T_If,g)&=\hat\cA(T_If\otimes \bar g)=\hat\cA T_1(f\otimes\bar g)=\hat\cA T_1\hat\cA^{-1}W_\cA(f,g).
		\end{split}
	\]
	The second identity follows analogously and the third is a composition of the preceding two ones.
\end{proof}

\begin{lemma}\label{lemma12}
The product  $T_1T_1'$, where the operators $T_1$ and $T_1'$ are defined in \eqref{T1} and \eqref{T1'}, respectively, can be explicitly written as
\begin{equation}\label{T1T1p}
	T_1T_1'F(x,y)=\int_{\rdd}e^{2\pi i[\Phi(x,\omega)-\Phi(y,-\rho)]}\sigma(x,\omega)\overline{\sigma(y,-\rho)}\hat F(\omega,\rho)d\omega d\rho,\,\,F\in\cS(\rdd).
\end{equation}
\end{lemma}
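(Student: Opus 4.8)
The plan is a direct computation that exploits the \emph{variable-separated} structure of the two factors. In \eqref{T1} the phase $\Phi_1(x,y,\xi,\eta)=\Phi(x,\xi)+y\eta$ splits and the symbol $\sigma_1$ does not depend on $(y,\eta)$, so $T_1$ really acts only on the first $\rd$-variable; symmetrically, in \eqref{T1'} the phase is $\xi x-\Phi(y,-\eta)$ and the symbol does not depend on $(x,\xi)$, so $T_1'$ acts only on the second variable. Concretely, I would first carry out the $\eta$-integration in \eqref{T1} by Fourier inversion and rewrite
\[
T_1 G(x,y)=\int_{\rd}e^{2\pi i\Phi(x,\xi)}\sigma(x,\xi)\Big(\int_{\rd}G(u,y)e^{-2\pi i u\xi}\,du\Big)d\xi,\qquad G\in\cS(\rdd),
\]
and analogously carry out the $\xi$-integration in \eqref{T1'}, obtaining
\[
T_1'F(x,y)=\int_{\rd}e^{-2\pi i\Phi(y,-\eta)}\,\overline{\sigma(y,-\eta)}\Big(\int_{\rd}F(x,v)e^{-2\pi i v\eta}\,dv\Big)d\eta .
\]

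Next I would compose the two. Inserting $G=T_1'F$ into the first displayed formula, the inner factor becomes $\int_{\rd}(T_1'F)(u,y)e^{-2\pi i u\xi}\,du$; substituting the second displayed formula and interchanging the order of integration, the remaining double integral in $(u,v)$ is exactly the full Fourier transform $\hat F(\xi,\eta)$. Hence
\[
T_1T_1'F(x,y)=\int_{\rdd}e^{2\pi i[\Phi(x,\xi)-\Phi(y,-\eta)]}\sigma(x,\xi)\overline{\sigma(y,-\eta)}\,\hat F(\xi,\eta)\,d\xi\, d\eta,
\]
which is precisely \eqref{T1T1p} after renaming $(\xi,\eta)$ as $(\omega,\rho)$.

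The calculation is otherwise routine; the only delicate point is the legitimacy of the Fourier inversions and of the interchanges of integration. For $F\in\cS(\rdd)$ and $\sigma\in S^0_{0,0}$ these are immediate once the oscillatory integrals are regularized in the standard way, and in the general case $\sigma\in\cS'(\rdd)$ every identity above is read in the sense of oscillatory integrals and tempered distributions, exactly as in the proofs of Propositions \ref{prop1} and \ref{prop2}. As a consistency check one may evaluate on tensor products $F=f\otimes\bar g$, $f,g\in\cS(\rd)$: Propositions \ref{prop1} and \ref{prop2} give $T_1T_1'(f\otimes\bar g)=T_1(f\otimes\overline{T_Ig})=T_If\otimes\overline{T_Ig}$, which agrees with the right-hand side of \eqref{T1T1p} applied to $f\otimes\bar g$.
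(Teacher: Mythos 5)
Your proof is correct, and it takes a cleaner route than the paper. The paper substitutes $T_1'F$ into $T_1$ directly and expands everything, ending up with a $6d$-dimensional formal integral which is then collapsed via the heuristic identities $\int e^{-2\pi i(v-y)\eta}\,d\eta\,dv=\delta_y(v)\,dv$ and $\int e^{-2\pi i(\xi-\omega)u}\,du\,d\xi=\delta_\omega(\xi)\,d\xi$. You instead observe first that the phase and symbol in \eqref{T1} are separated so that $T_1$ acts only on the first $\rd$-variable, and symmetrically for $T_1'$; carrying out the trivial Fourier inversion inside each operator reduces them to
\[
T_1 G(x,y)=\int_{\rd}e^{2\pi i\Phi(x,\xi)}\sigma(x,\xi)\,\cF_1G(\xi,y)\,d\xi,
\qquad
T_1'F(x,y)=\int_{\rd}e^{-2\pi i\Phi(y,-\eta)}\overline{\sigma(y,-\eta)}\,\cF_2F(x,\eta)\,d\eta,
\]
and then the composition produces the full Fourier transform $\hat F$ immediately, with no delta functions needed. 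This is genuinely a different (and tidier) organization of the same computation: you replace the formal $\delta$-manipulations by honest partial Fourier inversions applied one variable at a time, which is easier to justify rigorously for $F\in\cS(\rdd)$ and also makes the tensor structure of $T_1T_1'$ transparent. The consistency check on $F=f\otimes\bar g$ is a nice sanity check and is correct: $T_1T_1'(f\otimes\bar g)=T_If\otimes\overline{T_Ig}$, matching \eqref{T1T1p} after the change $\rho\mapsto-\rho$ inside the integral.
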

\begin{proof}
	Take $F\in \cS(\rdd)$. Then,
	\begin{align*}
		T_1T_1'F(x,y)&=\int_{\rdd}e^{2\pi i[\Phi(x,\xi)+y\eta]}\sigma(x,\xi)\widehat{T_1'F}(\xi,\eta)d\xi d\eta\\
		&=\int_{\rdd}e^{2\pi i[\Phi(x,\xi)+y\eta]}\sigma(x,\xi)\int_{\rdd}T_1'F(u,v)e^{-2\pi i(u\xi+v\eta)}dudv d\xi d\eta\\
		&=\int_{\bR^{4d}}e^{2\pi i[\Phi(x,\xi)+y\eta-u\xi-v\eta]}\sigma(x,\xi)\int_{\rdd}e^{2\pi i[u\omega-\Phi(v,-\rho)]}\overline{\sigma(v,-\rho)}\\
		&\qquad\quad\times\hat F(\omega,\rho)d\omega d\rho dudvd\xi d\eta\\
		&=\int_{\bR^{6d}}e^{2\pi i[\Phi(x,\xi)-\Phi(v,-\rho)+y\eta-u\xi-v\eta+u\omega]}\\
		&\qquad\quad\times\sigma(x,\xi)\overline{\sigma(v,-\rho)}\hat F(\omega,\rho)dudvd\xi d\eta d\omega d\rho.
	\end{align*}
	Using the well-known formulae in $S'(\rd)$ (with the integrals meant in a formal way):
	\begin{align*}
		\int_{\rd}e^{-2\pi i(v-y)\eta}d\eta dv=\delta_y(v)dv\qquad \mbox{and}\qquad \int_{\rd}e^{-2\pi i(\xi-\omega)u}dud\xi=\delta_\omega(\xi)d\xi,
	\end{align*}
we can write (again, the integrals are formal)
	\begin{align*}
		T_1T_1'F(x,y)&=\int_{\bR^{4d}}e^{2\pi i[\Phi(x,\xi)-\Phi(v,-\rho)]}\sigma(x,\xi)\overline{\sigma(v,-\rho)}\hat F(\omega,\rho)\delta_y(v)dv\delta_\omega(\xi)d\xi d\omega d\rho\\
		&=\int_{\rdd}e^{2\pi i[\Phi(x,\omega)-\Phi(y,-\rho)]}\sigma(x,\omega)\overline{\sigma(y,-\rho)}\hat F(\omega,\rho)d\omega d\rho.
	\end{align*}
	This concludes the proof.
\end{proof}
\begin{corollary}
	If the symbol $\sigma\in S^0_{0,0}(\rdd)$, the product $T_1T_1'$ in \eqref{T1T1p} defines a type I FIO $T_1T_1':\cS(\rdd)\to\cS'(\rdd)$ which extends to a bounded operator on $\lrdd$.
\end{corollary}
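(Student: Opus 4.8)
The plan is to read off from \eqref{T1T1p} that $T_1T_1'$ is again a FIO of type I, identify its phase and symbol, check that they are respectively \emph{tame} and of class $S^0_{0,0}(\bR^{4d})$, and then invoke the $L^2$-boundedness theorem for such operators exactly as in the proof of Corollary \ref{cor1}. Concretely, \eqref{T1T1p} exhibits $T_1T_1'$ in the form \eqref{T1} with integration variables $(\omega,\rho)$, phase
\[
	\Psi(x,y,\omega,\rho):=\Phi(x,\omega)-\Phi(y,-\rho)
\]
and symbol
\[
	\tau(x,y,\omega,\rho):=\sigma(x,\omega)\overline{\sigma(y,-\rho)}.
\]
That $T_1T_1'$ maps $\cS(\rdd)\to\cS'(\rdd)$ is the standard continuity of type I FIOs with $\cS'$-symbol and tame phase; alternatively, boundedness on $\lrdd$ is already immediate by composing the bounded operators $T_1$ and $T_1'$ (Corollary \ref{cor1} and the corollary following Proposition \ref{prop2}), so the real content of the statement is that the composition stays inside the type I class.

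First I would check $\tau\in S^0_{0,0}(\bR^{4d})$. By Propositions \ref{prop1} and \ref{prop2} the functions $\sigma(x,\omega)\otimes1(y,\rho)$ and $1(x,\omega)\otimes\overline{\sigma(y,-\rho)}$ already lie in $S^0_{0,0}(\bR^{4d})$ — the reflection $\rho\mapsto-\rho$ and complex conjugation obviously preserve the estimates \eqref{I5} — and since $S^0_{0,0}$ is closed under products by the Leibniz rule, $\tau$ belongs to $S^0_{0,0}(\bR^{4d})$. Next I would verify that $\Psi$ is a tame phase in the sense of Definition \ref{def2.1}. Smoothness is clear. Since $\Psi$ is the sum of a function of $(x,\omega)$ and a function of $(y,\rho)$, every derivative $\partial^\alpha\Psi$ with $|\alpha|\ge2$ equals, up to sign, a derivative of $\Phi$ of order $|\alpha|$ evaluated at $(x,\omega)$ or at $(y,-\rho)$, hence is bounded by property A2 of $\Phi$; this gives A2 for $\Psi$. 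For the nondegeneracy A3, the mixed Hessian of $\Psi$ with respect to the position block $(x,y)$ and the frequency block $(\omega,\rho)$ is block-diagonal,
\[
	\partial^2_{(x,y),(\omega,\rho)}\Psi=\begin{pmatrix}(\partial^2_{x,\eta}\Phi)(x,\omega)&0\\[2pt]0&(\partial^2_{x,\eta}\Phi)(y,-\rho)\end{pmatrix},
\]
where the two sign changes produced by differentiating $\Phi(y,-\rho)$ in $y$ and in $\rho$ cancel. Hence
\[
	\bigl|\det\partial^2_{(x,y),(\omega,\rho)}\Psi\bigr|=\bigl|\det(\partial^2_{x,\eta}\Phi)(x,\omega)\bigr|\cdot\bigl|\det(\partial^2_{x,\eta}\Phi)(y,-\rho)\bigr|\ge\delta^2>0
\]
by A3 for $\Phi$, so A3 holds for $\Psi$ and $\Psi$ is tame.

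With $\Psi$ tame and $\tau\in S^0_{0,0}(\bR^{4d})$, Theorem 3.4 of \cite{CGNRJMPA} applies and yields that $T_1T_1'$ extends to a bounded operator on $\lrdd$, which is the assertion. There is essentially no genuine obstacle here: the only point demanding a moment's care is tracking the sign changes in the second derivatives of $y\mapsto\Phi(y,-\rho)$, which is exactly what makes A2 and A3 for $\Psi$ follow cleanly from those for $\Phi$; the block-diagonal structure of the mixed Hessian then makes the nondegeneracy estimate immediate, and everything else is bookkeeping already contained in Propositions \ref{prop1}--\ref{prop2} and Lemma \ref{lemma12}.
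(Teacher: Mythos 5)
Your proof is correct and takes essentially the same route as the paper: you identify the phase $\Psi(x,y,\omega,\rho)=\Phi(x,\omega)-\Phi(y,-\rho)$ and symbol $\tau(x,y,\omega,\rho)=\sigma(x,\omega)\overline{\sigma(y,-\rho)}$ from \eqref{T1T1p}, verify tameness and $S^0_{0,0}$ membership, and invoke Theorem 3.4 of \cite{CGNRJMPA}, which is exactly the paper's argument (the paper merely labels these checks "straightforward," while you carry out the block-diagonal Hessian computation and the sign bookkeeping explicitly).
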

\begin{proof}
It is straightforward to check that the phase $\tilde\Phi(x,y,\omega,\rho):=\Phi(x,\omega)-\Phi(y,-\rho)$ is a tame phase on $\bR^{4d}$ and that the symbol $\tilde\sigma(x,y,\omega,\rho):=\sigma(x,\omega)\overline{\sigma(y,-\rho)}$ is in $S^0_{0,0}(\bR^{4d})$ if $\sigma\in S^0_{0,0}(\bR^{2d})$. Then the conclusion follows  from Theorem 3.4 of \cite{CGNRJMPA}.
\end{proof}

Here we compute the Wigner kernel $k_I$ of the type I FIO in \eqref{tipo1}.
\begin{theorem}\label{teor3.6}
	Consider $T_I$ a FIO of type I defined in \eqref{tipo1} with symbol $\sigma\in S^0_{0,0}(\rdd)$. For $f\in\cS(\rd)$, 
	\begin{equation}\label{K}
		K(W(f,g))(x,\xi)=W(T_If,T_Ig)(x,\xi)=\int_{\rdd}k_I(x,\xi,y,\eta)W(f,g)(y,\eta)dyd\eta,
	\end{equation}
	where the Wigner kernel $k_I$ is
	\begin{equation}\label{KI}
			k_I(x,\xi,y,\eta)=\int_{\rdd}e^{2\pi i[\Phi_I(x,\eta,t,r)-(\xi t+ry)]} \sigma_I(x,\eta,t,r)dtdr,
	\end{equation}
with, for $x,\eta,t,r\in\rd$,
\begin{equation}\label{fii}
\Phi_I(x,\eta,t,r)=\Phi(x+\frac{t}{2},\eta+\frac{r}{2})-\Phi(x-\frac{t}{2},\eta-\frac{r}{2})
\end{equation}
and
\begin{equation}\label{sigmai}
\sigma_I(x,\eta,t,r):=\sigma(x+\frac{t}{2},\eta+\frac{r}{2})\overline{\sigma(x-\frac{t}{2},\eta-\frac{r}{2})}.
\end{equation}
\end{theorem}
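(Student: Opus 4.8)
The plan is to combine the two tensor-factor computations (Propositions \ref{prop1} and \ref{prop2}) with the splitting $\hat A_{1/2}=\cF_2\mathfrak{T}_L$ of \eqref{A12prodotto} and its inverse \eqref{rem1}. By Proposition \ref{prop12} with $\hat\cA=\hat A_{1/2}$, we have $W(T_If,T_Ig)=\hat A_{1/2}\,T_1T_1'\,\hat A_{1/2}^{-1}W(f,g)$, and Lemma \ref{lemma12} gives the explicit integral form \eqref{T1T1p} of $T_1T_1'$. So the whole computation reduces to conjugating the operator $T_1T_1'$ of \eqref{T1T1p} by $\hat A_{1/2}$ and reading off the Schwartz kernel of the resulting operator on $\cS(\rdd)$; that kernel, evaluated against $W(f,g)$, is by definition $k_I$.

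Concretely, first I would take $F=W(f,g)=\hat A_{1/2}(f\otimes\bar g)\in\cS(\rdd)$ and compute $G:=\hat A_{1/2}^{-1}F$ using \eqref{rem1}. Then I would apply $T_1T_1'$ to $G$ via \eqref{T1T1p}, which requires the Fourier transform $\hat G(\omega,\rho)$; since $\hat A_{1/2}^{-1}$ is itself built from a partial Fourier transform and a dilation, $\hat G$ is just a linear change of variables applied to $\hat F=\cF(W(f,g))$, and one can keep everything in terms of $W(f,g)$ rather than $f,g$ individually. Finally I would apply $\hat A_{1/2}=\cF_2\mathfrak{T}_L$ to the result. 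Collecting the phase contributions, the phase $\Phi(x,\omega)-\Phi(y,-\rho)$ from \eqref{T1T1p} together with the quadratic exponentials coming from the three metaplectic factors should reorganize — after the change of variables $x\mapsto x\pm t/2$, $\eta\mapsto\eta\pm r/2$ dictated by the $\mathfrak{T}_L$'s — precisely into $\Phi_I$ in \eqref{fii}, the symbol products into $\sigma_I$ in \eqref{sigmai}, and the leftover linear terms into $-(\xi t+ry)$. An alternative, perhaps cleaner, route is to use Theorem \ref{3.3}: compute the Schwartz kernel $k_{T_I}$ of $T_I$ from \eqref{tipo1} (it is $\int_{\rd}e^{2\pi i\Phi(x,\xi)}\sigma(x,\xi)e^{-2\pi i\xi y}\,d\xi$), form $k_{T_I}\otimes\overline{k_{T_I}}$, and then apply $\mathfrak{T}_pW$ as in \eqref{nuclei}; the Wigner transform of a tensor product of two kernels of this oscillatory form again produces the $\pm t/2$, $\pm r/2$ shifts and yields \eqref{KI} directly.

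The main obstacle is purely bookkeeping: tracking the several linear changes of variables (the dilations $\mathfrak{T}_L$, $\mathfrak{T}_{L^{-1}}$, the partial Fourier transforms $\cF_2$, $\cF_2^{-1}$, and the symplectic reshuffling hidden in $\hat A_{1/2}$) and verifying that all the spurious quadratic and bilinear phase terms cancel, leaving exactly $\Phi_I(x,\eta,t,r)-(\xi t+ry)$ in the exponent. One must also be careful that all integrals are interpreted distributionally — the inner integrals defining $T_1$, $T_1'$ and their composition are oscillatory and only the combined expression \eqref{KI} converges in $\cS'(\bR^{4d})$; Corollary after Lemma \ref{lemma12} guarantees $T_1T_1'$ is a well-defined type I FIO with tame phase $\tilde\Phi$ and symbol $\tilde\sigma\in S^0_{0,0}(\bR^{4d})$, so its conjugate by the metaplectic $\hat A_{1/2}$ is again a well-defined continuous operator $\cS(\rdd)\to\cS'(\rdd)$ and hence has a Schwartz kernel $k_I\in\cS'(\bR^{4d})$. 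Once the change-of-variables algebra is done correctly, identity \eqref{KI} follows by inspection, and the representation \eqref{K} is then just the statement that $k_I$ is that kernel.
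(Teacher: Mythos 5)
Your primary route — invoking Proposition \ref{prop12} with $\hat\cA=\hat A_{1/2}$, using Lemma \ref{lemma12} for the explicit form of $T_1T_1'$, and then unwinding the metaplectic factors $\cF_2\mathfrak{T}_L$ and $\mathfrak{T}_{L^{-1}}\cF_2^{-1}$ by changes of variables and formal $\delta$-identities — is precisely the computation carried out in the paper's proof. The alternative route you sketch via Theorem \ref{3.3} and $k_I=\mathfrak{T}_pWk_{T_I}$ is also valid, but the paper does not pursue it here.
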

\begin{proof}
	We use Proposition \ref{prop12} with $\cA=A_{1/2}$,  Lemma \ref{lemma12} and \eqref{rem1}. Observe that for $f,g\in\cS(\rd)$ the Wigner distribution $W(f,g)$ is in $\cS(\rdd)$,  metaplectic operators are continuous on $\cS(\rdd)$, so the operators' compositions below are well defined and we reckon 
	\[
		W(T_If,T_Ig)(x,\xi)=\hat A_{1/2}T_1T_1'\hat A_{1/2}^{-1}W(f,g)(x,\xi),\quad (x,\xi)\in\rdd,
	\]
so that
	\begin{align*}
		W(T_If,T_Ig)(x,\xi)&=\int_{\rd}(T_1T_1'\hat A_{1/2}^{-1}W(f,g))(x+t/2,x-t/2)e^{-2\pi it\xi}dt\\
		&=\int_{\rd}\int_{\rdd}e^{2\pi i[\Phi(x+\frac{t}{2},\omega)-\Phi(x-\frac{t}{2},-\rho)]}e^{-2\pi i\xi t}\\
		&\qquad\quad\times\,\,\sigma(x+\frac{t}{2},\omega)\overline{\sigma(x-\frac{t}{2},-\rho)}{(\hat A_{1/2}^{-1}W(f,g))^{\wedge}}(\omega,\rho)d\omega d\rho  dt\\
		&=\int_{\bR^{3d}}e^{2\pi i[\Phi(x+\frac{t}{2},\omega)-\Phi(x-\frac{t}{2},-\rho)-\xi t]}\sigma(x+\frac{t}{2},\omega)\overline{\sigma(x-\frac{t}{2},-\rho)}\\
		&\qquad\quad\times\,\,\int_{\rdd}\hat A_{1/2}^{-1}W(f,g)(u,v)e^{-2\pi i[u\omega +v\rho]}dudvd\omega d\rho dt\\
		&=\int_{\bR^{5d}}\underbrace{e^{2\pi i[\Phi(x+\frac{t}{2},\omega)-\Phi(x-\frac{t}{2},-\rho)-\xi t-u\omega-v\rho]}\sigma(x+\frac{t}{2},\omega)\overline{\sigma(x-\frac{t}{2},-\rho)}}_\text{$=:\phi(u,v,\omega,\rho,t)$}\\
		&\qquad\quad\times\,\,\hat A_{1/2}^{-1}W(f,g)(u,v)dudvd\omega d\rho dt\\
		&=\int_{\bR^{5d}}\phi(u,v,\omega,\rho,t)\int_{\rd}W(f,g)(u/2+v/2,\eta)e^{2\pi i(u-v)\eta}d\eta dudvd\omega d\rho dt.
	\end{align*}
	Using the change of variables $u/2+v/2=y$, 
	\begin{align*}
		W(T_If,T_Ig)(x,\xi)&=2^d\int_{\bR^{6d}}e^{2\pi i[\Phi(x+\frac{t}{2},\omega)-\Phi(x-\frac{t}{2},-\rho)-\xi t-(2y-v)\omega-v\rho-v\eta]}e^{2\pi i(2y-2v)\eta}\\
		&\qquad\quad\times\, \sigma(x+\frac{t}{2},\omega)\overline{\sigma(x-\frac{t}{2},-\rho)}W(f,g)(y,\eta)d\eta dydvd\omega d\rho dt.
	\end{align*}
In other words,
	\[
		W(T_If,T_Ig)(x,\xi)=\int_{\rdd}k_I(x,\xi,y,\eta)W(f,g)(y,\eta)dyd\eta,
	\]
	with
\begin{align*}
		k_I(x,\xi,y,\eta)&=2^d\int_{\bR^{4d}}e^{2\pi i[\Phi(x+\frac{t}{2},\omega)-\Phi(x-\frac{t}{2},-\rho)-\xi t-2y(\omega-\eta)+v(\omega-\rho-2\eta)]}\\
		&\qquad\quad\times\,\,\sigma(x+\frac{t}{2},\omega)\overline{\sigma(x-\frac{t}{2},-\rho)}dvd\omega d\rho dt.
			\end{align*}
Using the well-known formula in $\cS'(\rd)$ (with formal integral):
	\[
	\int_{\rd}e^{2\pi iv(\omega-\rho-2\eta)}dvd\omega=\delta_{2\eta+\rho}(\omega)d\omega,
	\]
we continue the computations as follows:
	\begin{align*}
	k_I(x,\xi,y,\eta)&=2^d\int_{\bR^{2d}}e^{2\pi i[\Phi(x+\frac{t}{2},2\eta+\rho)-\Phi(x-\frac{t}{2},-\rho)-\xi t-2y(\eta+\rho)]}\\
	&\qquad\quad\times \,\, \sigma(x+\frac{t}{2},2\eta+\rho)\overline{\sigma(x-\frac{t}{2},-\rho)}d\rho dt\\
	&=2^d\int_{\bR^{2d}}e^{2\pi i[\Phi(x+\frac{t}{2},2\eta+\rho)-\Phi(x-\frac{t}{2},-\rho)-\xi t-2y\eta-2y\rho]}\\
	&\qquad\quad\times \,\,\sigma(x+\frac{t}{2},\rho)\overline{\sigma(x-\frac{t}{2},-\rho)}d\rho dt\\
	&=2^d e^{-4\pi iy\eta}\int_{\bR^{2d}}e^{2\pi i[\Phi(x+\frac{t}{2},2\eta+\rho)-\Phi(x-\frac{t}{2},-\rho)-\xi t-2y\rho]}\\
	&\qquad\qquad\qquad\times \,\,\sigma(x+\frac{t}{2},2\eta+\rho)\overline{\sigma(x-\frac{t}{2},-\rho)}d\rho dt\\
	&=e^{-4\pi iy\eta}\int_{\rdd}e^{2\pi i[\Phi(x+\frac{t}{2},2\eta+\frac{\rho}{2})-\Phi(x-\frac{t}{2},-\frac{\rho}{2})]}e^{-2\pi i(\xi t+y\rho)}\\
		&\qquad\qquad\quad\times\,\,\sigma(x+\frac{t}{2},2\eta+\frac{\rho}{2})\overline{\sigma(x-\frac{t}{2},-\frac{\rho}{2})}d\rho dt.
	\end{align*}
Finally, the change of variables  $$\eta+\rho/2=r/2$$
 (so that \, $2\eta+\rho/2=\eta+r/2$ and  $-\rho/2=\eta-r/2$) gives the Wigner kernel \eqref{KI}.
\end{proof}
\subsection{Properties of the Wigner kernel}
In this subsection we study the properties of the Wigner kernel $k_I$. Let us first study the general case of a FIO $T_I$  with tame phase $\Phi$, in view of future applications.
\begin{theorem}\label{T4.1}
	Let $T_I$ be a FIO of type I as defined in \eqref{tipo1} with tame phase (Definition \ref{def2.1}). Assume its symbol $\sigma$ is in $\cS(\rdd)$. Then its Wigner kernel $k_I$ can be expressed as
		\begin{equation}\label{KI-exp}
		k_I(x,\xi,y,\eta)=\int_{\rdd} e^{-2\pi i[t\cdot(\xi-\Phi_x(x,\eta))+ r\cdot(y-\Phi_\eta(x,\eta))]}\tilde{\sigma}(x,\eta,t,r)\,dtdr
	\end{equation}
where
\begin{equation}\label{sigmatilde}
	\tilde{\sigma}(x,\eta,t,r)=e^{2\pi
		i[\Phi_{2}-\widetilde{\Phi}_{2}](x,\eta,t,r)} \sigma(x+\frac{t}{2},\eta+\frac{r}{2})\overline{\sigma(x-\frac{t}{2},\eta-\frac{r}{2})}
\end{equation}

with \begin{equation}
	\label{eq:c11}
	\Phi_{2}(x,\eta,t,r)=\sum_{|\a|=2}\int_0^1(1-\tau)\partial^\a
	\Phi(\phas+\tau(t,r)/2)\,d\tau\frac{(t,r)^\a}{2^3\a!}.
\end{equation}
and 
\begin{equation}
	\label{eq:c12}
	\widetilde{\Phi}_{2}(x,\eta,t,r)=\sum_{|\a|=2}\int_0^1(1-\tau)\partial^\a
	\Phi(\phas-\tau(t,r)/2)\,d\tau\frac{(t,r)^\a}{2^3\a!}.
\end{equation}
Moreover, 
	\begin{equation}\label{E0}
		k_I(x,\xi,y,\eta)=\frac1{\la
			2\pi(y-\Phi_\eta(x,\eta),\xi-\Phi_x(x,\eta))\ra^{2N}}\cF_2 a_N(x,\eta,\xi-\Phi_x(x,\eta),y-\Phi_\eta(x,\eta)),
	\end{equation}
where, for $u=(t,r)$, 
	\begin{equation}\label{KN}
		a_N(x,\eta,t,r)=(1-\Delta_u)^N\tilde{\sigma}(x,\eta,t,r).
	\end{equation}
	
\end{theorem}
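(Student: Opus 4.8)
The starting point is the expression for the Wigner kernel provided by Theorem~\ref{teor3.6}: for $\sigma\in\cS(\rdd)\subset S^0_{0,0}(\rdd)$,
\begin{equation*}
	k_I(x,\xi,y,\eta)=\int_{\rdd}e^{2\pi i[\Phi_I(x,\eta,t,r)-(\xi t+ry)]}\,\sigma_I(x,\eta,t,r)\,dtdr,
\end{equation*}
with $\Phi_I$ and $\sigma_I$ as in \eqref{fii}, \eqref{sigmai}. Since $\sigma\in\cS(\rdd)$, for each fixed $(x,\xi,y,\eta)$ the integrand is a Schwartz function of $(t,r)$ times a unimodular factor, so the integral converges absolutely and all the manipulations below are elementary; no oscillatory-integral regularization is needed, which is exactly the point of phrasing the statement for Schwartz symbols. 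The plan is: (i) Taylor-expand the difference phase $\Phi_I$ to first order in $(t,r)$ and identify the second-order remainder with $\Phi_2-\widetilde\Phi_2$, which yields \eqref{KI-exp}; (ii) read \eqref{KI-exp} as a partial Fourier transform in $(t,r)$ and integrate by parts to pull out the weight, which yields \eqref{E0}.

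\textbf{Step (i).} Applying Taylor's formula with integral remainder to $\tau\mapsto\Phi\big((x,\eta)\pm\tau(t,r)/2\big)$ one gets
\begin{equation*}
	\Phi\Big(x\pm\tfrac{t}{2},\eta\pm\tfrac{r}{2}\Big)=\Phi(x,\eta)\pm\tfrac12\big(\Phi_x(x,\eta)\cdot t+\Phi_\eta(x,\eta)\cdot r\big)+R_\pm(x,\eta,t,r),
\end{equation*}
where $R_+=\Phi_2$ and $R_-=\widetilde\Phi_2$ are precisely the degree-two remainders in \eqref{eq:c11}, \eqref{eq:c12} (the normalizing constants $2^3\alpha!$ there being those produced by the rescaling $(t,r)\mapsto(t,r)/2$ in the arguments of $\partial^\alpha\Phi$). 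Subtracting the two expansions, the constant terms cancel and $\Phi_I=\Phi_x(x,\eta)\cdot t+\Phi_\eta(x,\eta)\cdot r+(\Phi_2-\widetilde\Phi_2)$, whence
\begin{equation*}
	\Phi_I(x,\eta,t,r)-(\xi t+ry)=-t\cdot\big(\xi-\Phi_x(x,\eta)\big)-r\cdot\big(y-\Phi_\eta(x,\eta)\big)+\big(\Phi_2-\widetilde\Phi_2\big)(x,\eta,t,r).
\end{equation*}
Substituting this into the integral and absorbing $e^{2\pi i(\Phi_2-\widetilde\Phi_2)}$ into the amplitude, i.e.\ setting $\tilde\sigma=e^{2\pi i(\Phi_2-\widetilde\Phi_2)}\sigma_I$ as in \eqref{sigmatilde}, gives exactly \eqref{KI-exp}.

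\textbf{Step (ii).} Formula \eqref{KI-exp} exhibits $k_I(x,\xi,y,\eta)$ as the partial Fourier transform of $\tilde\sigma(x,\eta,\cdot,\cdot)$ in the $(t,r)$-variables, evaluated at the frequency $\zeta:=\big(\xi-\Phi_x(x,\eta),\,y-\Phi_\eta(x,\eta)\big)$. Inserting in the integrand the elementary identity $(1-\Delta_u)^N e^{-2\pi i u\cdot\zeta}=\langle 2\pi\zeta\rangle^{2N}e^{-2\pi i u\cdot\zeta}$ with $u=(t,r)$, and integrating by parts $2N$ times in $u$, one obtains
\begin{equation*}
	k_I(x,\xi,y,\eta)=\frac{1}{\langle 2\pi\zeta\rangle^{2N}}\int_{\rdd}e^{-2\pi i u\cdot\zeta}\,(1-\Delta_u)^N\tilde\sigma(x,\eta,u)\,du=\frac{1}{\langle 2\pi\zeta\rangle^{2N}}\,\cF_2 a_N(x,\eta,\zeta),
\end{equation*}
with $a_N=(1-\Delta_u)^N\tilde\sigma$ as in \eqref{KN}; since $\langle\cdot\rangle$ is invariant under permuting the components of $\zeta$, this is exactly \eqref{E0}. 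The boundary terms in the integration by parts vanish because, for fixed $(x,\eta)$, $\tilde\sigma(x,\eta,\cdot,\cdot)$ is Schwartz in $(t,r)$: $\sigma_I(x,\eta,\cdot,\cdot)$ is a product of two Schwartz functions of $(t,r)$ hence Schwartz, while $\Phi_2,\widetilde\Phi_2$ grow at most quadratically in $(t,r)$ and all their $(t,r)$-derivatives are polynomially bounded — because $\partial^\alpha\Phi$ is bounded for $|\alpha|\geq2$ by the tameness bound \eqref{phasedecay} — so $e^{2\pi i(\Phi_2-\widetilde\Phi_2)}$ together with all its derivatives is polynomially bounded and multiplying it by $\sigma_I$ preserves the Schwartz property in $(t,r)$.

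\textbf{Main obstacle.} Conceptually the proof is light: one second-order Taylor expansion of the difference phase followed by a standard integration by parts. The only points requiring genuine care are the decay/regularity bookkeeping for $\tilde\sigma$ in Step (ii) — chiefly checking that the cross-remainder $\Phi_2-\widetilde\Phi_2$, although it grows quadratically, has all $(t,r)$-derivatives polynomially controlled so that the product with the Schwartz factor $\sigma_I$ stays Schwartz in $(t,r)$ — and the tracking of the normalizing constants in \eqref{eq:c11}--\eqref{eq:c12}. I expect the former to be the principal, if routine, difficulty; it uses essentially the Schwartz hypothesis on $\sigma$, since for $\sigma\in S^0_{0,0}(\rdd)$ the $(t,r)$-integrals are genuinely oscillatory and the above integration by parts cannot be performed directly — this is precisely why Theorem~\ref{T4.1} is stated for $\sigma\in\cS(\rdd)$.
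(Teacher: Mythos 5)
Your argument follows the paper's proof step for step: a second-order Taylor expansion of $\Phi(x\pm t/2,\eta\pm r/2)$ with integral remainder around $(x,\eta)$, yielding \eqref{KI-exp} with $\tilde\sigma$ as in \eqref{sigmatilde}, followed by the same integration by parts via the identity $(1-\Delta_u)^N e^{-2\pi i u\cdot\zeta}=\la 2\pi\zeta\ra^{2N}e^{-2\pi i u\cdot\zeta}$ to obtain \eqref{E0}. The additional bookkeeping you supply — checking that $\tilde\sigma(x,\eta,\cdot,\cdot)$ stays Schwartz in $(t,r)$ so the boundary terms vanish, and noting that this is precisely where $\sigma\in\cS(\rdd)$ is used — is not spelled out in the paper but is the correct justification for the otherwise formal manipulation.
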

\begin{proof}
	Since $\Phi$ is smooth, we can expand $\Phi(x+\frac{t}{2},\eta+\frac{r}{2})$ and $\Phi(x-\frac{t}{2},\eta-\frac{r}{2})$
	into a Taylor series around
	$\phas$. Namely,
	\begin{equation}\label{E1}
		\Phi\left(x+\frac{t}{2},\eta+\frac{r}{2}\right)=\Phi\phas+\frac {t}2\Phi_x \phas+\frac {r}2 \Phi_\eta \phas +\Phi_{2}(x,\eta,t,r),
	\end{equation}
	where the remainder $\Phi_{2}$ is given by \eqref{eq:c11}.
	Similarly, 
	\begin{equation}\label{E2}
		\Phi\left(x-\frac{t}{2},\eta-\frac{r}{2}\right)=\Phi\phas-\frac {t}2\Phi_x \phas-\frac {r}2 \Phi_\eta \phas +\widetilde{\Phi}_{2}(x,\eta,t,r),
	\end{equation}
	with
	$\widetilde{\Phi}_{2}$ defined in \eqref{eq:c12} above.
	Inserting the phase expansions above in \eqref{KI} we obtain \eqref{KI-exp}, 
	where
	$\tilde{\sigma}$ is defined in \eqref{sigmatilde}.
	
	For $N\in\bN$, $u=(t,r)\in\rdd$, using the identity:
	\begin{multline*}(1-\Delta_u)^Ne^{-2\pi i[(\xi-\Phi_x(x,\eta), y-\Phi_\eta(x,\eta))\cdot (t,r)]}\\
		=\la
		2\pi(\xi-\Phi_x(x,\eta), y-\Phi_\eta(x,\eta))\ra^{2N} e^{-2\pi i [(\xi-\Phi_x(x,\eta), y-\Phi_\eta(x,\eta))\cdot (t,r)]},
	\end{multline*}
	we integrate by parts in \eqref{KI-exp} and obtain
	\begin{align*}
		k_I(x,\xi,y,\eta)&=\frac1{\la
			2\pi(\xi-\Phi_x(x,\eta), y-\Phi_\eta(x,\eta))\ra^{2N}}\int_{\rdd} e^{-2\pi i [(\xi-\Phi_x(x,\eta), y-\Phi_\eta(x,\eta))\cdot (t,r)]} \\
		&\qquad\qquad\times (1-\Delta_u)^N \tilde{\sigma}(x,\eta,t,r)\,dtdr\\
		&=\frac1{\la
			2\pi(\xi-\Phi_x(x,\eta), y-\Phi_\eta(x,\eta))\ra^{2N}}\int_{\rdd} e^{-2\pi i [(\xi-\Phi_x(x,\eta), y-\Phi_\eta(x,\eta))\cdot (t,r)]} \\
		&\qquad\qquad\times a_N(x,\eta,t,r)\,dtdr,
	\end{align*}
	with the symbol $a_N$ defined in \eqref{KN}. This gives the claim \eqref{E0}.
\end{proof}

We now focus on the type I FIO with canonical transformation $\chi=S\in Sp(d,\bR)$. In this case the expression of the kernel $k_I$ in \eqref{KI-exp} simplifies drastically. 
\begin{theorem}\label{stime}
	Assume a linear symplectic transformation $S\in Sp(d,\bR)$ with block-decomposition in \eqref{S} and related phase $\Phi$ in \eqref{fase}, and consider $T_I$ the FIO of type I in \eqref{tipo1} having  symbol $\sigma\in S^0_{0,0}(\rdd)$. Then the Wigner kernel is given by
		\begin{equation}\label{klS}
		k_I(x,\xi,y,\eta)=\int_{\rdd} e^{-2\pi i[t\cdot(\xi-\Phi_x(x,\eta))+ r\cdot(y-\Phi_\eta(x,\eta))]}\tilde{\sigma}(x,\eta,t,r)\,dtdr
	\end{equation}
	with 
	\begin{equation}\label{tildesigma}
		\tilde{\sigma}(x,\eta,t,r)=\sigma(x+\frac t2,\eta+\frac r2)\overline{\sigma(x-\frac t2,\eta-\frac r2)}\in S^0_{0,0}(\bR^{4d}).
	\end{equation}
	Moreover, $T_I\in FIO(S)$.
\end{theorem}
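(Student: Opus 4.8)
The plan is to derive both assertions from the Wigner‑kernel formula for type~I FIOs already established in Theorem~\ref{teor3.6} (equivalently Theorem~\ref{T4.1}), the crucial simplification being that the phase $\Phi$ attached to a \emph{linear} symplectic $S$ via \eqref{fase} is a \emph{quadratic} form.

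First I would specialize \eqref{KI}. The only phase‑dependent ingredient there is $\Phi_I(x,\eta,t,r)=\Phi(x+\tfrac t2,\eta+\tfrac r2)-\Phi(x-\tfrac t2,\eta-\tfrac r2)$, and for a quadratic $\Phi(v)=\tfrac12 v\cdot Mv+\ell\cdot v$ the identity $\Phi(a+b)-\Phi(a-b)=2\,\nabla\Phi(a)\cdot b$ gives, with $a=(x,\eta)$ and $b=(t,r)/2$,
\begin{equation*}
	\Phi_I(x,\eta,t,r)=t\cdot\Phi_x(x,\eta)+r\cdot\Phi_\eta(x,\eta),
\end{equation*}
which is \emph{linear} in $(t,r)$; equivalently, in Theorem~\ref{T4.1} the remainders \eqref{eq:c11} and \eqref{eq:c12} coincide because $\partial^\alpha\Phi$ is constant for $|\alpha|=2$, whence $e^{2\pi i[\Phi_2-\widetilde\Phi_2]}=1$. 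Inserting this into \eqref{KI} turns the phase into $-2\pi i[t\cdot(\xi-\Phi_x(x,\eta))+r\cdot(y-\Phi_\eta(x,\eta))]$ and leaves $\sigma_I$ of \eqref{sigmai} unchanged; this is exactly \eqref{klS}--\eqref{tildesigma}. That $\tilde\sigma\in S^0_{0,0}(\bR^{4d})$ is then immediate: $\tilde\sigma=(\sigma\circ L_+)\,\overline{\sigma\circ L_-}$ with $L_\pm(x,\eta,t,r)=(x\pm\tfrac t2,\eta\pm\tfrac r2)$ linear, so every derivative of $\sigma\circ L_\pm$ is a finite linear combination of functions $(\partial^\beta\sigma)\circ L_\pm$ (hence bounded), and $S^0_{0,0}$ is an algebra stable under complex conjugation.

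It remains to recast $k_I$ in the form \eqref{nucleoFIO}, i.e. $k_I(z,w)=h(z,Sw)$ with $h$ the kernel of a pseudodifferential operator with symbol in $S^0_{0,0}(\bR^{4d})$. Using $\Phi_x(x,\eta)=CA^{-1}x+(A^{-1})^{T}\eta$, $\Phi_\eta(x,\eta)=A^{-1}x-A^{-1}B\eta$ and the relations satisfied by $S=\begin{pmatrix} A&B\\ C&D\end{pmatrix}\in Sp(d,\bR)$ (in particular $D-CA^{-1}B=(A^{-1})^{T}$ and symmetry of $CA^{-1}$, $A^{-1}B$), a direct computation yields, for $z=(x,\xi)$, $w=(y,\eta)$,
\begin{equation*}
	\big(\xi-\Phi_x(x,\eta),\ y-\Phi_\eta(x,\eta)\big)=\mathcal M\,(z-Sw),\qquad \mathcal M:=\begin{pmatrix} -CA^{-1}&I\\ -A^{-1}&0\end{pmatrix},
\end{equation*}
with $\mathcal M\in GL(2d,\bR)$ since $\det\mathcal M=\det(A^{-1})\neq0$ — precisely where condition {\it A6} ($\det A\neq0$) is used. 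Substituting in \eqref{klS} and changing the Fourier variable by $(t,r)\mapsto\mathcal M^{T}(t,r)$ exhibits $k_I(z,w)=h(z,Sw)$, where $h$ is obtained from $Op(\tilde\sigma)$ by a linear change of the phase‑space variables governed by $\mathcal M$ and by $S$; concretely $h$ is the kernel of a metaplectic pseudodifferential operator $Op_{W_\mathcal{A}}(\tilde\sigma)$ (or $Op_{W_\mathcal{A}'}(\sigma\otimes\bar\sigma)$ after a linear change of variables), whose symbol is still in $S^0_{0,0}(\bR^{4d})$. Since $\det A\neq0$, the metaplectic operator $\mathcal A$ is admissible, i.e. equivalent to a standard quantization, so $h$ is the kernel of a pseudodifferential operator with symbol in $S^0_{0,0}(\bR^{4d})$ (the quantization being immaterial, cf.\ the Remark after Definition~\ref{C6T1.1old}); hence $T_I\in FIO(S)$.

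A complementary view uses the algebra property of Theorem~\ref{Pii} already at hand: $\hat S\in FIO(S)$ since its Wigner kernel is $(\det A)\,\delta(z-Sw)$, i.e.\ \eqref{nucleoFIO} with $h=(\det A)\,\delta$; and, granted the case $S=I$ of the present theorem (which says that a pseudodifferential operator with $S^0_{0,0}(\rdd)$ symbol lies in $FIO(I)$), the operator $\hat S^{-1}T_I$ — whose canonical transformation is the identity and whose symbol is controlled by $\sigma$ together with the constant higher derivatives of the quadratic $\Phi$ — lies in $FIO(I)$, so $T_I=\hat S(\hat S^{-1}T_I)\in FIO(S)$. In either route the reduction to the quadratic‑phase case and the bound for $\tilde\sigma$ are routine; the genuine obstacle is the last part of the argument above: once $S$ has been extracted, the base‑point/Fourier‑variable dependence of $h$ is ``twisted'' by $S$, and one must invoke $\det A\neq0$ (equivalently {\it A6}) to recognize $h$ as a bona fide $S^0_{0,0}(\bR^{4d})$ pseudodifferential kernel.
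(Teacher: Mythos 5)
Your derivation of the explicit formula \eqref{klS}--\eqref{tildesigma} is correct and matches the paper: you specialize \eqref{KI}, observe that for a quadratic phase $\Phi_I(x,\eta,t,r)=t\cdot\Phi_x(x,\eta)+r\cdot\Phi_\eta(x,\eta)$ (equivalently, the Taylor remainders \eqref{eq:c11}--\eqref{eq:c12} cancel), and the $S^0_{0,0}$ stability of $\tilde\sigma$ under the two linear substitutions is routine. Your identity $\bigl(\xi-\Phi_x(x,\eta),\,y-\Phi_\eta(x,\eta)\bigr)=\mathcal M(z-Sw)$ with $\mathcal M=\begin{pmatrix}-CA^{-1}&I\\-A^{-1}&0\end{pmatrix}$ is also correct and parallels the paper's matrix $R_I$ in \eqref{defdellaM}.

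The gap is in the final step, deducing that the resulting $h$ with $k_I(z,w)=h(z,Sw)$ is the kernel of a pseudodifferential operator with symbol in $S^0_{0,0}(\bR^{4d})$. After the Fourier change of variables $(t,r)\mapsto\mathcal M^T(t,r)$, the ``symbol'' sitting under the $\cF_2$ still depends on the base point $(x,\eta)$, not on $z=(x,\xi)$, and $\eta$ is a linear function of \emph{both} $z$ and $Sw$. Rewriting this as a Kohn--Nirenberg kernel in the variables $(z, z-Sw)$ therefore requires a genuine \emph{metaplectic} transformation of the symbol (acting jointly on position and frequency slots), not a mere change of quantization. Your appeal to ``the quantization being immaterial, cf.\ the Remark after Definition~\ref{C6T1.1old}'' is insufficient: that remark covers the Weyl/Kohn--Nirenberg ambiguity, but the class $S^0_{0,0}$ is \emph{not} invariant under arbitrary metaplectic transformations of the symbol (e.g.\ a full Fourier transform would destroy it). What saves the argument --- and what the paper proves --- is that the relevant metaplectic operator $\hat U_I$, obtained via the intertwining relation $\mathfrak{T}_{R_I}\cF_2=\cF_2\hat U_I$ of Proposition \ref{propA1}, has an \emph{upper triangular} projection because $R_I$ is upper triangular; only then does one get, through the modulation--space characterization $S^0_{0,0}=\bigcap_s M^{\infty,1}_{1\otimes v_s}$ and the invariance results of \cite{Fuhr} and \cite{BC2021}, that $\hat U_I\tilde\sigma\in S^0_{0,0}(\bR^{4d})$. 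Your ``complementary view'' via the algebra property is a nice structural remark, but it presupposes both the $S=I$ case of the theorem and the claim that $\hat S^{-1}T_I$ is a Kohn--Nirenberg pseudodifferential operator with $S^0_{0,0}$ symbol, neither of which is cheaper than the direct argument you are trying to avoid.
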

\begin{proof}
	We use the expression of the kernel in formula \eqref{KI} and compute the argument of the exponential. Here the phase $\Phi$ is explicitly given by \eqref{fase}.
	An easy computation yields
	$$\Phi(x+\frac{t}{2},\eta+\frac{r}{2})-\Phi(x-\frac{t}{2},\eta-\frac{r}{2})=xCA^{-1}t+\eta A^{-1}
	t+rA^{-1}x-\eta A^{-1}B r
	$$
	(recall that $CA^{-1}$ and $A^{-1}B$ are symmetric matrices and $D=CA^{-1}B+A^{-T}$, cf.  \cite{Elena-book,Gos11}).
	Hence we can write
\begin{align*}
		 xCA^{-1}t+\eta A^{-1}
	t+rA^{-1}x-\eta A^{-1}B r&=t\cdot(CA^{-1}x+ A^{-T}\eta) +r\cdot(A^{-1}x-A^{-1}B \eta)\\
	&=t\cdot\Phi_x(x,\eta)+r\cdot \Phi_\eta(x,\eta),
\end{align*}
where
\begin{equation}\label{gradiente fasi}
	\Phi_x(x,\eta)=CA^{-1}x+ A^{-T}\eta\quad \mbox{and}\quad\Phi_\eta(x,\eta)= A^{-1}x-A^{-1}B \eta
\end{equation}
	so that 
	\begin{equation}\label{midStepGG}
			k_I(x,\xi,y,\eta)=\int_{\rdd} e^{-2\pi i[t\cdot(\xi-\Phi_x(x,\eta))+ r\cdot(y-\Phi_\eta(x,\eta))]}\tilde{\sigma}(x,\eta,t,r)\,dtdr,
	\end{equation}
i.e., we obtain formula \eqref{klS} with kernel 
$\tilde{\sigma}$ in \eqref{tildesigma} in $S^0_{0,0}(\bR^{4d})$, 
since $\sigma\in S^0_{0,0}(\rdd)$.

We now prove that  $T_I\in FIO(S)$. Using the symplectic relations for $S$ and the relations $CA^{-1},A^{-1}B\in\Sym(d,\bR)$ and $D=CA^{-1}B+A^{-T}$, we can rewrite (\ref{midStepGG}) as:
\begin{align*}
	k_I(x,\xi,y,\eta)&=\cF_2\tilde	\sigma(x,\eta,\xi-\Phi_x(x,\eta),y-\Phi_\eta(x,\eta))\\
	&=\cF_2\tilde\sigma(x,\eta,\xi-CA^{-1}x-A^{-T}\eta,y-A^{-1}x+A^{-1}B\eta)\\
	&=\cF_2\tilde\sigma(R_I(x,\xi,S(y,\eta)-(x,\xi)))\\
	&=\mathfrak{T}_{R_I}\cF_2\tilde\sigma(x,\xi,S(y,\eta)-(x,\xi)),
\end{align*}
where $\mathfrak{T}_{R_I}F=F\circ R_I$, and
	\begin{equation}\label{defdellaM}
	R_I=\left(\begin{array}{cc|cc}
		I & 0 & 0 & 0\\
		-C^T & A^T & -C^T & A^T\\
		\hline
		0 & 0& CA^{-1} & -I\\
		0 & 0 & A^{-1} & 0
	\end{array}\right).
\end{equation}
Since $R_I$ is upper triangular, by Proposition \ref{propA1} we have that for a suitable $\hat U_I\in Mp(4d,\bR)$ having upper triangular projection $U_I\in Sp(4d,\bR)$,
\begin{align*}
	k_I(x,\xi,y,\eta)&=\mathfrak{T}_{R_I}\cF_2\tilde\sigma(x,\xi,S(y,\eta)-(x,\xi))\\
	&=\cF_2(\hat U_I\tilde\sigma)(x,\xi, S(y,\eta)-(x,\xi)),
\end{align*}
we refer to Appendix \ref{A1} for the proof of this intertwining relation, which can be checked directly trough matrix multiplication. Since $\tilde\sigma\in S^0_{0,0}(\bR^{4d})$, we have that $\tilde\sigma\in M^{\infty,1}_{1\otimes v_s}(\bR^{4d})$ for every $s\geq0$, cf. \cite[Chapter 2]{Elena-book}. By \cite[Theorem 4.6]{Fuhr}, $\hat U_I\tilde\sigma\in M^{\infty,1}_{1\otimes v_s}(\bR^{4d})$ for every $s\geq0$ and, therefore, $\hat U_I\tilde\sigma\in S^0_{0,0}(\bR^{4d})$ by \cite[Lemma 2.2]{BC2021}. Therefore, setting $\hat U_I\tilde\sigma=\tilde{\tilde\sigma}\in S^0_{0,0}(\bR^{4d})$,
\[
	h(x,\xi,S(y,\eta))=k_I(x,\xi,y,\eta)=\cF_2\tilde{\tilde\sigma}(x,\xi,S(y,\eta)-(x,\xi)),
\]
i.e.
\[
	h(x,\xi,y,\eta)=\cF_2\tilde{\tilde\sigma}(x,\xi,(y,\eta)-(x,\xi)),
\]
that is $h$ is the kernel of a pseudodifferential operator with Kohn-Nirenberg symbol $\tilde{\tilde\sigma}\in S^0_{0,0}(\bR^{4d})$.

\end{proof}

\begin{remark}\label{e8}
For a quadratic phase $\Phi$, Theorem \ref{T4.1} simplifies considerably, with $\tilde\sigma$ made of a product of two linear transformations of the original symbol $\sigma$, without any reminder term of the phase $\Phi$. For $u=(t,r)$, using
\begin{multline*}(1-\Delta_u)^Ne^{-2\pi i[(\xi-\Phi_x(x,\eta), y-\Phi_\eta(x,\eta))\cdot (t,r)]}\\
	=\la
	2\pi(\xi-\Phi_x(x,\eta), y-\Phi_\eta(x,\eta))\ra^{2N} e^{-2\pi i [(\xi-\Phi_x(x,\eta), y-\Phi_\eta(x,\eta))\cdot (t,r)]},
\end{multline*}
we integrate by parts in \eqref{klS} and use Lemma 6.1.4. in \cite{Elena-book} to infer, for every $N\in\bN$,
\begin{align*}
	k_I(x,\xi,y,\eta)&=\frac1{\la
		2\pi(\xi-\Phi_x(x,\eta), y-\Phi_\eta(x,\eta))\ra^{2N}}\int_{\rdd} e^{-2\pi i [(\xi-\Phi_x(x,\eta), y-\Phi_\eta(x,\eta))\cdot (t,r)]} \\
	&\qquad\quad\times\quad(1-\Delta_u)^N\tilde{\sigma}(x,\eta,t,r)\,dtdr
\end{align*}
that we can estimate by
\begin{align*}
\quad	&\frac1{\la
		2\pi(\xi-S_2(y,\eta), x-S_1(y,\eta))\ra^{2N}}\int_{\rdd} e^{-2\pi i [(\xi-\Phi_x(x,\eta), y-\Phi_\eta(x,\eta))\cdot (t,r)]} \\
	&\qquad\quad\times\quad(1-\Delta_u)^N\tilde{\sigma}(x,\eta,t,r)\,dtdr\\
	&=\frac1{\la
		2\pi(\xi-S_2(y,\eta), x-S_1(y,\eta))\ra^{2N}}h_N(x,\xi,S(y,\eta))
\end{align*}
where, arguing as in the proof above,  $h_N$ is the kernel of a pseudodifferential operator with symbol in $ S^0_{0,0}(\bR^{4d})$. In particular, if $T$ is a Kohn-Nirenberg pseudodifferential operator, the phase is $\Phi(x,\eta)=x\eta$, so that $\Phi_\eta(x,\eta)=x$ and $\Phi_x(x,\eta)=\eta$,
and the related symplectic map is the identity $I_{2d\times2d}$. Hence, for every $N\in\bN$, if we set $z=(x,\xi)$, $w=(y,\eta)$, then
$$	k_I(z,w)=\frac1{\la
	z-w\ra^{2N}}h_N(z,w),$$
with $h_N$ kernel of a pseudodifferential operator with symbol in $\cS^0_{0,0}(\bR^{4d}).$
  Thus, we recover Theorem 1.4 in \cite{CRPartI2022}.
\end{remark}

We end up this section with a few comments related to the $L^2$-adjoint of a FIO of type $I$, called FIO of type II, written  formally as
\begin{equation}\label{FIOII}
	T_{II}f(x)=\int_{\rdd}e^{-2\pi i[\Phi(y,\xi)-x\xi]}\tau(y,\xi)f(y)dyd\xi, \qquad f\in\cS(\rd).
\end{equation}
As for the type I case, we consider  symbols $\tau$ in the H\"ormander class $S^0_{0,0}(\rdd)$ and tame phase functions $\Phi$, cf. Definition \ref{def2.1}. These assumptions yield that $T_{II}$ is bounded on $\lrd$ and  $T_{II}=T_I^{\ast}$, see for
example \cite{AS78}.\par
From Corollary \ref{aggiunto}, if $T_I \in FIO(S)$ then  $T_{II}\in FIO(S^{-1})$.

\section{Application to Schr\"{o}dinger equations}\label{sec:Section7}
The theory developed so far can be successfully applied in the context of Quantum Mechanics. 
Precisely, we can represent the Wigner kernel of the  Schr\"{o}dinger propagator $e^{itH}$,  solution to the Cauchy problem \eqref{C6intro} with Hamiltonian $H=a(x,D)+\sigma(x,D)$ in \eqref{PropH}. We consider $a(x,D)$,   the quantization of a quadratic form, and the perturbation  $\sigma(x,D)$ with symbol $\sigma\in S^0_{0,0}(\rdd)$, as explained in the introduction.
\begin{theorem}\label{teofinal}
	Let $H$ be the Hamiltonian in \eqref{PropH} with
quadratic polynomial $a$ and $\sigma\in
	S^0_{0,0}(\rdd)$. Let $e^{itH}$ be the corresponding
	propagator. Then $e^{itH}\in FIO(S_t)$ for every $t \in\bR$, where $S_t\in Sp(d,\bR)$ is the Hamiltonian flow defined by \eqref{eitA}, $S_t(y,\eta)=(x(t,y,\eta), \xi(t,y,\eta))$. 
	Namely, the Wigner kernel for the solution of the homogenous problem $i\frac{\partial}{\partial t} u + Op_w(a) u = 0$ is given by 
	\[
k(t,z,w) =h_t(z,S_tw)
	\]
where $h_t$ is the kernel of a pseudodifferential operator of symbol  $b_t\in  S^0_{0,0}(\bR^{4d})$, with continuous dependence of $t\in\bR$.
\end{theorem}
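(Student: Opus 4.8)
The plan is to combine the two main structural results already established—the local-in-time representation of $e^{itH}$ as a type I FIO with quadratic phase, and the algebra property of the classes $FIO(S)$—together with the characterization of type I FIOs with linear canonical transformation proved in Theorem \ref{stime}. First I would invoke the classical well-posedness theory for \eqref{C6intro} (references \cite{AS78,bony1,bony2,bony3,CGNRJMPA,helffer84}): for $|t|<T$ with $T$ sufficiently small, the condition \eqref{detcond2}, i.e.\ $\left|\det\frac{\partial x}{\partial y}(t,y,\eta)\right|\geq\delta>0$, holds along the Hamiltonian flow $S_t$ of the quadratic symbol $a$, hence the propagator admits the representation \eqref{C61.13} as a type I FIO with phase $\Phi(t,x,\xi)$ quadratic in $(x,\xi)$ and amplitude $b(t,\cdot)\in S^0_{0,0}(\rdd)$ (the regularity of $b$ being inherited from that of the perturbation $\sigma\in S^0_{0,0}$, cf.\ \cite{helffer84}). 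Since $a$ is quadratic, the associated canonical transformation is the linear symplectic map $\chi_t=S_t\in Sp(d,\bR)$ of \eqref{C6chiharmonic}-type, and the quadratic phase is exactly of the form \eqref{fase} relative to the block decomposition of $S_t$. Theorem \ref{stime} then applies verbatim and yields $e^{itH}\in FIO(S_t)$ for $|t|<T$, with Wigner kernel $k(t,z,w)=h_t(z,S_tw)$ where $h_t$ is the kernel of a pseudodifferential operator with Kohn-Nirenberg symbol $b_t\in S^0_{0,0}(\bR^{4d})$.

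Next I would extend to all $t\in\bR$ using the group law $e^{itH}=\big(e^{i(t/n)H}\big)^n$. Choosing $n=n(t)$ so that $|t/n|<T$, each factor $e^{i(t/n)H}$ lies in $FIO(S_{t/n})$ by the previous step; by the algebra property (Theorem \ref{Pii}, i.e.\ Theorem \ref{tc1old}(ii)), the composition lies in $FIO\big((S_{t/n})^n\big)=FIO(S_t)$, the last equality because $t\mapsto S_t$ is a one-parameter subgroup of $Sp(d,\bR)$ (solutions of the linear Hamiltonian system \eqref{eitA} with quadratic $a$). This gives $e^{itH}\in FIO(S_t)$ for every $t\in\bR$, so the Wigner kernel satisfies \eqref{nucleoFIOt}: $k(t,z,w)=h_t(z,S_tw)$ with $h_t$ the kernel of a pseudodifferential operator whose symbol $b_t$ belongs to $S^0_{0,0}(\bR^{4d})$, with the caustic points of the classical type I representation causing no difficulty since $S_t$ is defined and symplectic for all $t$ and the Wigner kernel exists unconditionally by Theorem \ref{3.3}.

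Finally I would address the continuous dependence of $t\mapsto b_t$. For $|t|<T$ this follows by tracking the construction in the proof of Theorem \ref{stime}: the matrix $R_I=R_I(t)$ in \eqref{defdellaM} depends continuously (indeed smoothly) on the entries $A_t,B_t,C_t,D_t$ of $S_t$, which depend smoothly on $t$; the metaplectic operator $\hat U_I=\hat U_I(t)$ thus depends continuously on $t$ in the relevant operator topology, and the seminorm bounds \eqref{I5} for $b_t=\hat U_{I}(t)\tilde\sigma(t,\cdot)$ are uniform on compact $t$-intervals by \cite[Theorem 4.6]{Fuhr} and \cite[Lemma 2.2]{BC2021}, together with the smooth dependence of the phase $\Phi(t,\cdot)$ and amplitude $b(t,\cdot)$ on $t$ (from the standard parametrix construction). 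For general $t$ one propagates this through the finitely many compositions, using that composition of $FIO(S)$-operators corresponds, at the level of symbols, to the operations in the proof of Theorem \ref{Pii}, which are continuous. The main obstacle, I expect, is precisely this last point: making the continuous (or smooth) dependence on $t$ genuinely rigorous requires either a careful quantitative version of the parametrix construction for $e^{itH}$ with uniform $S^0_{0,0}$-symbol estimates on compact time intervals, or an argument showing the $S^0_{0,0}$-symbol class of the Wigner kernel is stable under the relevant limiting procedures; the algebraic membership $e^{itH}\in FIO(S_t)$ is comparatively routine once Theorems \ref{stime} and \ref{Pii} are in hand.
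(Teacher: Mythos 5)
Your proposal is correct and follows essentially the same route as the paper: local-in-time type I FIO representation via \cite{helffer84}, application of Theorem \ref{stime} to get $e^{itH}\in FIO(S_t)$ for $|t|<T$, and then extension to all $t$ by composing small-time propagators using the algebra property (Theorem \ref{Pii}) together with the one-parameter group law of $S_t$; your choice of splitting $e^{itH}=\bigl(e^{i(t/n)H}\bigr)^n$ is a cosmetic variant of the paper's $e^{it_1 H}(e^{it_2 H})^{|m|}$ partition. Your added discussion of why the continuity of $t\mapsto b_t$ requires care is a fair observation, as the paper itself only notes this briefly.
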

\begin{proof}
Key tools are the results and pattern of \cite{helffer84}. Precisely,   there exists a $T>0$ such that  $e^{itH}$  can be represented as a type I FIO as in \eqref{C61.13}. By Theorem \ref{stime}, we infer $$e^{itH}\in FIO(S_t),\quad t\in (-T,T),$$
where $S_t\in Sp(d,\bR)$ is the canonical transformation satisfying  condition \eqref{detcond2} and related to the tame phase $\Phi(t,x,\xi)$ in formula  \eqref{C61.13} by  \eqref{cantra}. In other words, the Wigner kernel $k(t,z,w)$  of $e^{itH}$  satisfies the equality \eqref{nucleoFIOt} for $t\in(-T,T)$.

Observe that in general the product of two type I FIOs is not a type I FIO as well, so we can not use an approximation argument to get \eqref{nucleoFIOt} for every $t$ working directly on $e^{itH}$.
Instead,  we use the algebra property of $FIO(S_t)$ and the classical trick we report below for sake of clarity.

Consider $T_0<T/2$ and define $I_m=(mT_0,(m+2)T_0)$, $m\in\bZ$. For $t\in\bR$, there exists an $m\in\bZ$ such that $t \in I_m$. For $t_1=t-mT_0\in (-T,T)$ we have that $e^{it_1H}\in FIO(S_{t_1})$ and for $t_2=\frac{m}{|m|}T_0\in (-T,T)$, $e^{it_2H}\in FIO(S_{t_2})$.
Using the semigroup property of $e^{itH}$ and the  algebra property in Theorem \ref{tc1old} $(ii)$ we obtain
$$e^{it_1H}(e^{it_2H})^{|m|}\in FIO(S_{t_1}S_{t_2}^{|m|}).$$
By the group property of $S_t$:
$$S_{t_1}S_{t_2}^{|m|}=S_{t_1+|m|t_2}=S_t,$$
so that
\begin{equation}\label{meta}
	e^{itH}\in FIO(S_t),\quad t\in \bR,
\end{equation}
and the kernel representation in \eqref{nucleoFIOt} holds for every $t\in\bR$. Observe also the continuity with respect to $t$ in the construction of $h_t$.
\end{proof}

Let us stress that this representation  holds for every $t\in\bR$, also in the caustic points, where \eqref{C61.13} fails.
\section{Examples}\label{7}
Let us clarify the statement of Theorem \ref{teofinal} with some examples other than the Harmonic Oscillator presented in the Introduction. Consider the case when $\sigma=0$, i.e., the Hamiltonian is a quadratic polynomial. Wigner kernels for this case were already considered in \cite{CGRPartII2022} in the lines of Wigner's original work \cite{Wigner}.
Let us now study the Wigner kernel under our new perspective.  

A global solution of 
\begin{equation*}
	\begin{cases} i \displaystyle\frac{\partial
			u}{\partial t} +a(x,D)u=0,\\
		u(0,x)=u_0(x).
	\end{cases}
\end{equation*}
can be expressed as a metaplectic operator (cf.  de Gosson \cite{Gos11}).

In general, the solution can be expressed by a FIO of type I only locally in time because of the caustics. Instead, the Wigner kernel has the global expression 
$$k(t,z,w)=\intrdd e^{2\pi i (z-S_tw)y }\,dy=\delta_{z-S_tw},$$
where $S_t$ is the symplectic Hamiltonian flow of $a(x,D)$.
So in this case the pseudodifferential operator $h=h_t$ in Theorem \ref{teofinal} is the identity, with kernel $\delta_{z-w}$.
Let us detail two relevant cases.\\
{\bf The Free Particle.} Here $a(x,D)=\Delta$ and a FIO of type I defines a global solution:
$$T_t f(x)=\intrd e^{2\pi i( x\xi- t
	\xi^2)} {\widehat {f}}(\xi)d\xi,\quad t\in\bR,\,\, x\in\rd.
$$
We have 
$$W(T_t f) (x,\xi)=Wf(x-t\xi,\xi)$$
with Wigner kernel 
\begin{equation*}
	k=\delta_{z-S_tw},\quad w,z\in\rdd,
\end{equation*}
where, if we write $w=(y,\eta)$, then $S_t(y,\eta)=(y+t\eta,\eta)$.\\
\noindent
{\bf Uniform magnetic potential.}  	This example was inspired by \cite[Section 4]{Helge}. 
 	Consider the uniform magnetic potential 
 	\[
 	A : \mathbb{R}^d \to \mathbb{R}^d, \quad x \mapsto m\omega Bx,
 	\]
 	where the mass $m>0$, and \( B \) is a real-valued \( d \times d \) matrix such that \( B^\top = -B \) and \( B^\top B = I \). For instance, in dimension  \( d = 2 \), the potential is of the form 
 	$
 	A(x_1, x_2) = \pm m\omega(-x_2, x_1).
 	$
 	In general, the operator related to the uniform magnetic potential is 
 	\[ a(x,D)= 	\frac{1}{2m} \left( \nabla - iA(x) \right)^2
 	\]
 	and the associated Hamiltonian is given by
 	\[
 	H(z) = \frac{1}{2m} \left\| \xi + A(x) \right\|^2 
 	= \frac{1}{2m} \xi^2 + \omega \left(  Bx\cdot \xi -  B\xi\cdot x \right) + \frac{m\omega^2}{2} x^2,
 	\]
 	where \( z = (x, \xi) \).
 	The Hamiltonian can be expressed as the inner product \( H(z) = \frac{1}{2} \langle Mz, z \rangle \), with 
 	\[
 	M = 
 	\begin{pmatrix}
 		m\omega^2 I & -\omega B \\
 		\omega B & \frac{1}{m} I
 	\end{pmatrix},
 	\]
 	so that 
 	\[
 	X := JM = 
 	\begin{pmatrix}
 		\omega B & \frac{1}{m} I \\
 		-m\omega^2 I & \omega B
 	\end{pmatrix} 
 	\]
 	(where $J$ is the standard symplectic matrix in \eqref{J}) is the matrix in the symplectic algebra $ \mathfrak{sp}(4, \mathbb{R})$. Note that there is an error in the computation of $M$ in \cite{Helge} which propagates throughout $X$ and $S_t$. 
 	
 		The canonical transformation $S_t$ is then obtained by computing 
 		\begin{equation}\label{theSummation}
 			S_t=	\exp(tX) = \sum_{k=0}^\infty \frac{t^k}{k!} X^k.
 		\end{equation}
 		Using that $B^2=-I$, we reckon
 		\[
 		X^2  = 2\omega
 		\begin{pmatrix}
 			B& 0 \\
 			0 & B
 		\end{pmatrix}X.
 		\]
 	A standard induction argument allows to distinguish between the matrices with even and odd exponents. In fact, using again that \( B^2 = -I \), we find
 	\[
 	X^{2k} = (-1)^{k-1} (2\omega)^{2k-1} \begin{pmatrix}
 		B& 0 \\
 		0 & B
 	\end{pmatrix}X\quad \text{and} \quad X^{2k+1} = (-1)^{k} (2\omega)^{2k} X \quad \text{for } k > 0.
 	\]
 	Thus, the summation \eqref{theSummation} becomes
 	\begin{align*}
 	S_t&=\exp(tX) =
 	I+\sum_{k=1}^\infty 
 	\frac{(-1)^{k-1} (2\omega)^{2k} t^{2k}}{(2k)!}  \begin{pmatrix}
 		B& 0 \\
 		0 & B
 	\end{pmatrix}X
 	+
 	\sum_{k=0}^\infty 
 	\frac{ (-1)^{k} (2\omega)^{2k}t^{2k+1}}{(2k+1)!}   X\\
 	&=I+\left(-\sum_{k=0}^\infty 	\frac{(-1)^{k} (2\omega)^{2k} t^{2k}}{(2\omega)(2k)!}+\frac{1}{2\omega}\right)\begin{pmatrix}
 		B& 0 \\
 		0 & B
 	\end{pmatrix}X + \sum_{k=0}^\infty 
 	\frac{ (-1)^{k} (2\omega)^{2k+1}t^{2k+1}}{(2\omega)(2k+1)!}   X\\
 	&=	I+\frac{1}{2\omega}\left( 1-\cos(2\omega t)\right)	\begin{pmatrix}
 		B& 0 \\
 		0 & B
 	\end{pmatrix}X+\frac{1}{2\omega}\sin(2\omega t)X\\
&=\frac12\begin{pmatrix}
\left(1+\cos(2\omega t)\right)I+\sin(2\omega t)B &	\frac{1}{m\omega}\left[\left(1-\cos(2\omega t)\right)B+\sin(2\omega t)I \right]\\
-{m\omega}\left[\left(1-\cos(2\omega t)\right)B+\sin(2\omega t)I\right] & \left(1+\cos(2\omega t)\right)I+\sin(2\omega t)B
\end{pmatrix}\\
=:&\begin{pmatrix}
	\mathcal{A}(t)& \mathcal{B}(t) \\
	\mathcal{C}(t)&\mathcal{D}(t)
\end{pmatrix}.
 	\end{align*}
 Using $B^T=-B$ easy computations yield
 $$\mathcal{A}(t)^T\mathcal{C}(t)=-\frac{m\omega}{2}\sin(2\omega t)I.$$  
  Hence, for \begin{equation*}
  	t\not=\frac{k\pi}{2\omega},
  \end{equation*}
the product and each factor $\mathcal{A}(t)$ and $\mathcal{C}(t)$ is invertible. In particular, for those $t$ so that $\mathcal{A}(t)$ is invertible, we retrieve the type I FIO representation (see \eqref{f4} and \eqref{fase}):
	\[
	u(t,x) = (\det\mathcal{A}(t))^{-1/2}
	\int
	e^{- \pi i x \cdot \mathcal{C}(t)\mathcal{A}(t)^{-1}x + 2\pi i \xi \cdot \mathcal{A}(t)^{-1}x + \pi i \xi \cdot \mathcal{A}(t)^{-1}\mathcal{B}(t) \xi} \, \hat{u}_0(\xi) \, d\xi.
	\]
In the points 
\begin{equation*}
	t_k:=\frac{k\pi}{2\omega},\quad k\in\bZ,
\end{equation*}
at least one among $\cA(t)$ and $\cC(t)$ is singular. In these cases, the simplectic matrix $S_{t_k}$ becomes
\[S_{t_k}=\frac12\begin{pmatrix}
	(1+(-1)^k)I& \frac{1}{m\omega}(1-(-1)^k)B\\
	-m\omega(1-(-1)^k)B&(1+(-1)^k)I
\end{pmatrix}.\]
For $k=2n$, $n\in\bZ$, we obtain $S_{t_{2n}}=I$, the identity matrix, and consequently
$$u(t,x)=\int_{\rd} e^{2\pi i x\xi} \hat{u}_0(\xi)\,d\xi.$$
On the other hand, the points 
\begin{equation*}
	t_{2n+1}=\frac{(2n+1)\pi}{2\omega},\quad n\in\bZ,
\end{equation*}
corresponding to the canonical transformation
\[S_{t_{2n+1}}=\frac12\begin{pmatrix}
	0& \frac{2}{m\omega}B\\
	-2m\omega B&0
\end{pmatrix},\]
are the \emph{caustics}. In these points, the FIO of type I representation of $u(t,x)$ fails, whereas the Wigner kernel 
\begin{equation*}
	k(t,z,w)=\delta_{z-S_t w},\quad w,z\in\rdd, \,t\in\bR,
\end{equation*}
is a well-defined tempered distribution. 
\section{Comments and additional references}\label{sec:Section8}
Our study of the Wigner kernel \eqref{nucleoFIOt} can be seen as part of a general project in microlocal analysis. Roughly speaking, let us fix a microlocal representation, i.e., a simultaneous description of energy in the space and frequency variables $(x,\xi)\in\rdd$, for a function or distribution $f$ in $\rd$. Given  a linear operator $T$, typically a pseudodifferential operator or Fourier integral operator, one aims to establish a direct relation $K$ between the microlocal representation of $f$ and that of $Tf$. The final objective is to infer relevant hidden properties of $T$ by $K$. 

The relation $K$ may assume quite different forms, depending on the choice of the microlocalization process. A natural approach is to cut-off $f$ by the translation at $x$ of a window function $g$ localized at the origin, and then consider the Fourier transform (short-time Fourier transform):
\begin{equation}\label{8.1}
	V_gf(x,\xi)=\int_{\rd} f(t)e^{-2\pi i\xi t}\overline{g(t-x)}dt, \qquad x,\xi\in\rd.
\end{equation}
A fundamental example is the wave front set of H\"ormander $WF(f)$, cf. \cite{B36}, where the support of $g$ shrinks to the origin and then, according to Heisenberg's uncertainty principle, the identification of the relevant frequencies is rough, namely $WF(f)$ is defined as a cone in the $\xi$ variables. Therefore, 
\begin{equation}\label{8.2}
	K:WF(f)\to WF(Tf)
\end{equation}
is represented by a geometric map.

In a more quantitative approach, one considers the short-time Fourier transform, given by the choice of a fixed window $g$ in \eqref{8.1}. The corresponding map $K$ is then a linear operator in $\rdd$ with kernel
\begin{equation}\label{8.3}
	k(z,w)=\la Tg_w,g_z\ra, \qquad z=(x,\xi), w=(y,\eta),
\end{equation}
\begin{equation}\label{8.4}
	g_z(t)=\pi(z)g(t)=M_\xi T_xg(t)=e^{2\pi i\xi t}g(t-x), \qquad t\in\rd.
\end{equation}
We used the notation in \cite{Elena-book, book}, where $k$ is called {\em Gabor matrix}. In this order of ideas, the more general setting was already given by Bony, cf. \cite{bony1,bony2,bony3}. Microlocalization there is provided by a partition of unity in terms of the atoms 
\begin{equation}\label{8.5}
	\psi_z(D)f, \qquad z=(x,\xi),
\end{equation}
where $\psi_z(D)$ are pseudodifferential operators with symbols localized at $z$ and satisfying uniform estimates in Weyl-H\"ormander classes. The corresponding microlocal representation is given by the norms $\norm{\psi_z(D)f}_{L^2(\rd)}$, whereas the corresponding map $K$ is identified by the kernel 
\begin{equation}\label{8.6}
	k(z,w)=\norm{\vartheta_z(D)T\psi_w(D)}_{\mathcal{L}(L^2(\rd))},
\end{equation}
for a suitable pair of families $\{\vartheta_z\}_z$, $\{\psi_w\}_w$.

In all the preceding constructions, pseudodifferential operators are characterized by the fact that the action of $K$ is concentrated at the diagonal, whereas in the case of FIOs it is concentrated at the graph of the related symplectic maps $\chi$. For the H\"ormander's wave front set, this is simply expressed by geometric inclusions, cf. \cite{B36}, for the Gabor matrix \eqref{8.3} and in the Bony's framework \eqref{8.6} by decay estimates for $k$. Actually, in the setting of the works by Bony, more general FIOs are defined in terms of decay estimates for the kernel $k$ outside a general manifold.

When comparing the Wigner microlocalization
\begin{equation}\label{8.7}
	Wf(x,\xi)=\int_{\rd}f(x+t/2)\overline{f(x-t/2)}e^{-2\pi i\xi t}dt,
\end{equation}
with \eqref{8.1}, we note that the role of the window $g$ in \eqref{8.1} is played in \eqref{8.7} by $f$ itself, and for symmetry reasons the translation at $x$ is distributed to both the factors under the integral. As already clear from the original paper by Wigner \cite{Wigner}, the main motivation for defining \eqref{8.7} is its striking effectiveness in the handling of Schr\"odinger equations. Besides, with respect to \eqref{8.1} and \eqref{8.5}, the Wigner distribution approach has the advantage that in \eqref{8.7} there is no dependence on the auxiliary functions $g$ and $\psi_z$. Nevertheless, the apparent drawback is the appearance of the so-called \emph{ghost frequencies}, see \cite[Section 8]{CGRPartII2022} for a comparison with the H\"ormander wave front set. On the other hand, the properties of the Wigner kernel in \eqref{nucleoFIOt} follow the lines of those valid for other microlocalizations. In fact, the Wigner kernel of pseudodifferential operators was proved in \cite{CRPartI2022} to be concentrated at the diagonal, and the results of the present paper, see in particular Remark \ref{e8}, clarify for FIOs concentration at the graph of the symplectic map. The algebraic setting of Definition \ref{C6T1.1old} and Theorem \ref{tc1old}, as well as our applications to Schr\"odinger equations, are inspired by the presentation by Bony, see \cite{bony1,bony2,bony3}. 

Surprisingly enough, ghost frequencies do not appear for the Wigner kernel in the case of pseudodifferential operators and FIOs with quadratic phase functions. In \cite{CGRPartIII}, we shall study the case of non-linear symplectic maps, where ghosts are still present, but they can be controlled by a smoothing argument.

Finally, concerning the global in time expression of the solutions of Schr\"odinger equations, the literature concerning the appearance of caustics is very large, let us limit to mention \cite{AS78, helffer84} proposing the use of Maslov index, and our recent paper \cite{CGRPartII2022} adopting the metaplectic framework. An alternative approach to the problem is given by semi-classical analysis, taking into account the role of Planck constant $\hbar$. According to Bohr's correspondence principle, the solution of a Schr\"odinger equation approaches the solutions of the Hamiltonian system as $\hbar$ becomes negligible. The quantitative formulation of this principle in large intervals of time has been extensively explored in the literature, under the name of {\em Ehrenfest time}. In this context, we mention \cite{bambusi, bouzouina} where the Hamiltonian is assumed to satisfy growth at infinity of quadratic type, cf. the preceding Section \ref{sec24}.

\begin{appendix}
	\section{}\label{A1}
	In this Appendix Section we prove the intertwining relation needed for the conclusion of Theorem \ref{stime}.
	\begin{proposition}\label{propA1}
	Let $M\in GL(2d,\bR)$ and $\mathfrak{T}_MF=F\circ M$, $F\in L^2(\rdd)$. Define
	\begin{equation}\label{commRel}
		\hat U=\cF_2^{-1}\mathfrak{T}_M\cF_2,
	\end{equation}
	i.e., $\hat U$ satisfies the intertwining relation $\mathfrak{T}_M\cF_2=\cF_2\hat U$, and $U=\pi^{Mp}(\hat U)\in Sp(2d,\bR)$. Then,\\
	(i) $U$ is upper triangular if and only if $M$ is upper triangular.\\
	(ii) $U$ is lower triangular, i.e., $B=0_{2d\times 2d}$ in its block decomposition \eqref{S}, if and only if $M$ is lower triangular.\\
	(iii) $U$ is diagonal, i.e., $B=C=0_{2d\times 2d}$ in its block decomposition \eqref{S}, if and only if $M$ is diagonal.
\end{proposition}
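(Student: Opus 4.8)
The plan is to reduce the statement to a single explicit matrix conjugation and then read off the block structure. Since $\pi^{Mp}$ is a group homomorphism (recalled after \eqref{piMp}), applying it to \eqref{commRel} gives
\[
U=\pi^{Mp}(\hat U)=\pi^{Mp}(\cF_2)^{-1}\,\pi^{Mp}(\mathfrak{T}_M)\,\pi^{Mp}(\cF_2).
\]
Here $\pi^{Mp}(\cF_2)=\cA_{FT2}$ is the matrix \eqref{AFT2}, and $\mathfrak{T}_M$ coincides, up to the positive scalar $|\det M|^{1/2}$, with the metaplectic dilation of \eqref{taue}, whose projection is $\cD_M=\begin{pmatrix} M^{-1}&0\\ 0&M^T\end{pmatrix}\in Sp(2d,\bR)$; the scalar is irrelevant for the projection (and in the application to Theorem \ref{stime} one has $M=R_I$ with $|\det R_I|=1$ anyway). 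Hence $U=\cA_{FT2}^{-1}\cD_M\,\cA_{FT2}$, and everything reduces to computing this conjugation.

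For the computation I would use that $\cA_{FT2}$, read off \eqref{AFT2} as a linear map on $\bR^{4d}$ written in $d\times d$ blocks, sends $(u_1,u_2,u_3,u_4)\mapsto(u_1,u_4,u_3,-u_2)$, with inverse $(v_1,v_2,v_3,v_4)\mapsto(v_1,-v_4,v_3,v_2)$. Writing $M=\begin{pmatrix}\alpha&\beta\\ \gamma&\delta\end{pmatrix}$ and $M^{-1}=\begin{pmatrix}\alpha'&\beta'\\ \gamma'&\delta'\end{pmatrix}$ in $d\times d$ blocks, a direct multiplication $\cA_{FT2}^{-1}\cD_M\,\cA_{FT2}$ yields
\[
U=\begin{pmatrix} \alpha' & 0 & 0 & \beta'\\ 0 & \delta^T & -\beta^T & 0\\ 0 & -\gamma^T & \alpha^T & 0\\ \gamma' & 0 & 0 & \delta'\end{pmatrix},
\]
which can be verified entry by entry and cross-checked against the symplectic relations via $MM^{-1}=I$. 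Regrouping into the $2d\times 2d$ block decomposition $U=\begin{pmatrix}\mathbf A&\mathbf B\\ \mathbf C&\mathbf D\end{pmatrix}$ of \eqref{S} (grouping rows and columns as $\{1,2\}$ and $\{3,4\}$) gives, in particular,
\[
\mathbf B=\begin{pmatrix} 0 & \beta'\\ -\beta^T & 0\end{pmatrix},\qquad \mathbf C=\begin{pmatrix} 0 & -\gamma^T\\ \gamma' & 0\end{pmatrix}.
\]

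To finish, observe that $U$ is upper triangular iff $\mathbf C=0$ iff $\gamma=0$ and $\gamma'=0$; $U$ is lower triangular iff $\mathbf B=0$ iff $\beta=0$ and $\beta'=0$; and $U$ is diagonal iff both hold. I would then invoke the elementary fact that an invertible matrix is block upper (resp. lower) triangular if and only if its inverse is, so that $\gamma=0\Leftrightarrow\gamma'=0$ and $\beta=0\Leftrightarrow\beta'=0$. Since $\gamma=0$ (resp. $\beta=0$, resp. $\beta=\gamma=0$) is exactly the condition that $M$ be upper triangular (resp. lower triangular, resp. diagonal), the three equivalences (i), (ii), (iii) follow immediately. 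There is no conceptual obstacle here; the only delicate point is the bookkeeping in the conjugation — tracking how $\cA_{FT2}^{-1}$ permutes and sign-flips the blocks, the transposes introduced by $M^T$ in $\cD_M$, and how the resulting $d\times d$ blocks reassemble into $\mathbf A,\mathbf B,\mathbf C,\mathbf D$.
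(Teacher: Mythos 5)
Your proof is correct and follows the same route as the paper's: project \eqref{commRel} through the group homomorphism $\pi^{Mp}$ to reduce to the matrix conjugation $U=\cA_{FT2}^{-1}\cD_M\cA_{FT2}$, compute it block by block, and read off when the off-diagonal $2d\times 2d$ blocks vanish. Minor remark: your conjugation gives $+\beta'$ (resp.\ $+\gamma'$) in positions $(1,4)$ and $(4,1)$, whereas the paper's display \eqref{prodint} shows $-M_{12}$, $-M_{21}$ there; I verified your signs, so the paper's display has a harmless typo — but since the criterion only concerns which blocks vanish, both versions yield the same conclusions. You also make explicit the step "block-triangularity of $M$ $\Leftrightarrow$ block-triangularity of $M^{-1}$ and $M^T$," which the paper uses implicitly; that is sound.
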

\begin{proof}
	Let us write
	\[
		M^{-1}=\begin{pmatrix}
			M_{11} & M_{12}\\
			M_{21} & M_{22}
		\end{pmatrix} \quad and \quad M^T=\begin{pmatrix}M'_{11} & M'_{12}\\
		M'_{21} & M'_{22}
		\end{pmatrix},
	\]
	for $M_{ij},M'_{ij}\in\bR^{d\times d}$ ($i,j=1,\ldots,2$), so that the matrix $\cD_M=\pi^{Mp}(\mathfrak{T}_M)$ is given by:
	\[
		\cD_M= \left(\begin{array}{cc|cc}
		M_{11} & M_{12} & 0 & 0\\
		M_{21} & M_{22} & 0 & 0\\
		\hline
		0 & 0 & M'_{11} & M'_{12}\\
		0 & 0 & M'_{21} & M'_{22}
	\end{array}\right).
	\]
	Since $\pi^{Mp}$ is a group homomorphism, at the level of projections formula \eqref{commRel} reads as
	\begin{equation}\label{projcommrel}
		\cA_{FT2}^{-1}\cD_M\cA_{FT2}=U,
	\end{equation}
	for $U\in Sp(2d,\bR)$, where $\cA_{FT2}$ is defined as in \eqref{AFT2}. Computing explicitly the left hand-side of \eqref{projcommrel} yields:
	\begin{equation}\label{prodint}
	U=\left(\begin{array}{cc|cc}
		M_{11} & 0 & 0 & -M_{12}\\
		0& M'_{22} & -M'_{21} & 0\\
		\hline
		0 & -M'_{12} & M'_{11} & 0\\
		-M_{21} & 0 & 0 & M_{22}
	\end{array}\right),
\end{equation}
which is upper triangular if and only if $M_{21}=M'_{12}=0$, i.e. if and only if $M^{-1}$ is upper triangular (and $M^T$ is lower triangular), if and only if $M$ is upper triangular. This proves $(i)$. Items $(ii)$ and $(iii)$ follow analogously.
\end{proof}

\end{appendix}
\section*{Acknowledgements}
The authors have been supported by the Gruppo Nazionale per l’Analisi Matematica, la Probabilità e le loro Applicazioni (GNAMPA) of the Istituto Nazionale di Alta Matematica (INdAM). Gianluca Giacchi is supported also by University of Bologna and HES-SO Valais-Wallis. We are also thankful to Reinhard Werner for inspiring the explanatory example represented in Figure \ref{fig:Wigner}, and the anonymous referee for the thoughtful revision of this work and the significant suggested improvements.

\bibliographystyle{abbrv}

\begin{thebibliography}{10}

\bibitem{AS78}
K.~Asada and D.~Fujimara.
\newblock On some oscillatory integral transformations in {$L^{2}({\bf
  R}^{n})$}.
\newblock {\em Japan. J. Math. New series}, 4(2):299--361, 1978.

\bibitem{bambusi}
D.~Bambusi, S.~Graffi, and T.~Paul.
\newblock Long time semiclassical approximation of quantum flows: a proof of
  the {E}hrenfest time.
\newblock {\em Asymptot. Anal.}, 21(2):149--160, 1999.

\bibitem{BC2021}
F.~Bastianoni and E.~Cordero.
\newblock Characterization of smooth symbol classes by {G}abor matrix decay.
\newblock {\em J. Fourier Anal. Appl.}, 28(3), 2022.

\bibitem{A8}
R.~Beals.
\newblock Characterization of pseudodifferential operators and applications.
\newblock {\em Duke Math. J.}, 44(1):45--57, 1977.

\bibitem{bony1}
J.-M. Bony.
\newblock Op\'erateurs int\'egraux de {F}ourier et calcul de
  {W}eyl-{H}\"ormander (cas d'une m\'etrique symplectique).
\newblock {\em J. \'Equ. D\/eriv. Partielles}, pages 1--14, 1994.

\bibitem{bony2}
J.-M. Bony.
\newblock Weyl quantization and {F}ourier integral operators.
\newblock In L.~H\"ormander and A.~Melin, editors, {\em Partial Differential
  Equations and Mathematical Physics}, pages 45--57, Boston, MA, 1996.
  Birkh\"auser Boston.

\bibitem{bony3}
J.-M. Bony.
\newblock {\em Evolution Equations and Generalized {F}ourier Integral
  Operators}, pages 59--72.
\newblock Birkh\"auser Boston Inc., Boston, MA, 2009, 2009.

\bibitem{bouzouina}
A.~Bouzouina and D.~Robert.
\newblock Uniform semiclassical estimates for the propagation of quantum
  observables.
\newblock {\em Duke Math. J.}, 111(2):223--252, February 2002.

\bibitem{CF2015}
B.~Cassano and L.~Fanelli.
\newblock Sharp {H}ardy uncertainty principle and {G}aussian profiles of
  covariant {S}chr\"odinger evolutions.
\newblock {\em Trans. Amer. Math. Soc.}, 367(3):2213--2233, 2015.

\bibitem{CF2017}
B.~Cassano and L.~Fanelli.
\newblock Gaussian decay of harmonic oscillators and related models.
\newblock {\em J. Math. Anal. Appl.}, 456(1):214--228, 2017.

\bibitem{Cohen1}
L.~Cohen.
\newblock Generalized phase-space distribution functions.
\newblock {\em J. Math. Phys.}, 7(5):781--786, 05 1966.

\bibitem{Cohen2}
L.~Cohen.
\newblock {\em Time-frequency analysis: Theory and Applications}, volume 778.
\newblock Prentice Hall PTR New Jersey, 1995.

\bibitem{wiener8}
F.~Concetti, G.~Garello, and J.~Toft.
\newblock Trace ideals for {F}ourier integral operators with non-smooth symbols
  ii.
\newblock {\em Osaka J. Math.}, 47(3):739--786, 2010.

\bibitem{wiener9}
F.~Concetti and J.~Toft.
\newblock {\em Trace ideals for {F}ourier integral operators with non-smooth
  symbols}, volume~52, pages 255--264.
\newblock American Mathematical Society and Fields Institute for Research in
  Mathematical Sciences, 2007.

\bibitem{AEFN06}
E.~Cordero, F.~De~Mari, K.~Nowak, and A.~Tabacco.
\newblock Analytic features of reproducing groups for the metaplectic
  representation.
\newblock {\em J. Fourier Anal. Appl.}, 12(2):157--180, 2006.

\bibitem{CGRPartIII}
E.~Cordero, G.~Giacchi, and L.~Rodino.
\newblock Wigner analysis of operators. {P}art {I}{I}{I}: controlling ghost
  frequencies.
\newblock arXiv:2412.01960.

\bibitem{CGRPartII2022}
E.~Cordero, G.~Giacchi, and L.~Rodino.
\newblock Wigner analysis of operators. {P}art {I}{I}: {S}chr\"odinger equations.
\newblock {\em Commun. Math. Phys.}, 405(7):156, 2024.

\bibitem{locNC09}
E.~Cordero, K.~Gr\"ochenig, and F.~Nicola.
\newblock Approximation of {F}ourier integral operators by {G}abor multipliers.
\newblock {\em J. Fourier Anal. Appl.}, 18(4):661--684, 2012.

\bibitem{CGNRJMPA}
E.~Cordero, K.~Gr\"ochenig, F.~Nicola, and L.~Rodino.
\newblock {W}iener algebras of {F}ourier integral operators.
\newblock {\em J. Math. Pures Appl.}, 99(2):219--233, 2013.

\bibitem{CGNRJMP2014}
E.~Cordero, K.~Gr\"ochenig, F.~Nicola, and L.~Rodino.
\newblock Generalized metaplectic operators and the {S}chr\"odinger equation
  with a potential in the {S}j\"ostrand class.
\newblock {\em J. Math. Phys.}, 55(8):081506, 08 2014.

\bibitem{Elena-book}
E.~Cordero and L.~Rodino.
\newblock {\em {T}ime-{F}requency {A}nalysis of {O}perators}.
\newblock De Gruyter, Berlin, Boston, 2020.

\bibitem{CRPartI2022}
E.~Cordero and L.~Rodino.
\newblock Wigner analysis of operators. {P}art {I}: Pseudodifferential operators
  and wave fronts.
\newblock {\em Appl. Comput. Harmon. Anal.}, 58:85--123, 2022.

\bibitem{Gos11}
M.~A. de~Gosson.
\newblock {\em {S}ymplectic {M}ethods in {H}armonic {A}nalysis and in {M}athematical
  {P}hysics}.
\newblock Pseudo-Differential Operators. Birk\"auser Basel, 1 edition, July
  2011.

\bibitem{FM2021}
A.~Fern{\'a}ndez-Bertolin and E.~Malinnikova.
\newblock Dynamical versions of {H}ardy's uncertainty principle: A survey.
\newblock {\em Bull. Amer. Math. Soc.}, 58(3):357--375, 2021.

\bibitem{folland89}
G.~B. Folland.
\newblock {\em Harmonic Analysis in Phase Space}.
\newblock Annals of Mathematics Studies. Princeton University Press, 1989.

\bibitem{Fuhr}
H.~F{\"u}hr and I.~Shafkulovska.
\newblock The metaplectic action on modulation spaces.
\newblock {\em Appl. Comput. Harmon. Anal.}, 68:101604, 2024.

\bibitem{book}
K.~Gr\"ochenig.
\newblock {\em Foundations of time-frequency analysis}.
\newblock Applied and Numerical Harmonic Analysis. Birkh\"auser Boston, MA, 1
  edition, December 2000.

\bibitem{charly06}
K.~Gr\"ochenig.
\newblock {Time-Frequency Analysis of Sj{\"o}strand's Class}.
\newblock {\em Rev. Mat. Iberoam.}, 22(2):703 -- 724, 2006.

\bibitem{GR}
K.~Gr\"ochenig and Z.~Rzeszotnik.
\newblock Banach algebras of pseudodifferential operators and their almost
  diagonalization.
\newblock {\em Ann. Inst. Fourier}, 58(7):2279--2314, 2008.

\bibitem{helffer84}
B.~Helffer.
\newblock {\em Th\'eorie spectrale pour des op\'erateurs globalement
  elliptiques}.
\newblock Number 112 in Ast\'erisque. Soci\'et\'e math\'ematique de France,
  1984.

\bibitem{B36}
L.~H\"ormander.
\newblock Fourier integral operators. i.
\newblock {\em Acta Math.}, 127(1):79--183, 1971.

\bibitem{Helge}
H.~Knutsen.
\newblock Notes on {H}ardy's uncertainty principle for the {W}igner
  distribution and {S}chr\"odinger evolutions.
\newblock {\em J. Math. Anal. Appl.}, 525(1):127116, 2023.

\bibitem{NR}
F.~Nicola and L.~Rodino.
\newblock {\em Global Pseudo-Differential Calculus on Euclidean Spaces}.
\newblock Pseudo-Differential Operators. Birkh\"auser Basel, 1 edition, May
  2010.

\bibitem{Ville48}
J.~Ville.
\newblock The'orie et applications de la notion de signal analytique.
\newblock {\em CeT, Laboratoire de Telecommunications de la Societe Alsacienne
  de Construction Mecanique}, 2:61--74, 1948.

\bibitem{A86}
N.~Wiener.
\newblock Tauberian theorems.
\newblock {\em Ann. Math.}, 33(1):1--100, 1932.

\bibitem{A87}
N.~Wiener.
\newblock Generalized harmonic analysis, and {T}auberian theorems.
\newblock {\em The M.I.T. Press. Cambridge, Mass.-London}, 1966.

\bibitem{Wigner}
E.~Wigner.
\newblock On the quantum correction for thermodynamic equilibrium.
\newblock {\em Phys. Rev.}, 40:749--759, Jun 1932.

\end{thebibliography}

\end{document}